\theoremstyle{plain}
\newtheorem{thma}{Main Theorem}
\newtheorem{thm}{Theorem}[section]
\newtheorem{prop}[thm]{Proposition}
\newtheorem{cor}[thm]{Corollary}
\theoremstyle{definition}
\newtheorem{defn}[thm]{Definition}
\newtheorem{lem}[thm]{Lemma}
\newtheorem{ex}[thm]{Example}
\numberwithin{equation}{thm}
\newcommand{\emphbf}[1]{\emph{\textbf{#1}}}
\DeclareMathAlphabet{\mathpzc}{OT1}{pzc}{m}{it}
\DeclareMathOperator{\Kopf}{top}
\DeclareMathOperator{\rad}{rad}
\DeclareMathOperator{\cx}{cx}
\DeclareMathOperator{\Dist}{Dist}
\DeclareMathOperator{\SL}{SL}
\DeclareMathOperator{\Ad}{Ad}
\DeclareMathOperator{\image}{Im}
\DeclareMathOperator{\charac}{char}
\DeclareMathOperator{\St}{St}
\DeclareMathOperator{\modu}{mod}
\DeclareMathOperator{\Hom}{Hom}
\DeclareMathOperator{\Ext}{Ext}
\DeclareMathOperator{\Lie}{Lie}
\DeclareMathOperator{\heart}{ht}
\DeclareMathOperator{\soc}{soc}
\DeclareMathOperator{\PGL}{PGL}
\DeclareMathOperator{\ad}{ad}
\DeclareMathOperator{\gr}{gr}
\DeclareMathOperator{\ch}{ch}
\begin{document}

\title{Auslander-Reiten theory of Frobenius-Lusztig kernels}
\author{Julian K\"ulshammer}
\address{Christian-Albrechts-Universit\"at zu Kiel, Ludewig-Meyn-Str. 4, 24098 Kiel, Germany}
\email{kuelshammer@math.uni-kiel.de}
\thanks{Supported by the D.F.G. priority program SPP1388 "Darstellungstheorie"}

\begin{abstract}
In this paper we show that the tree class of a component of the stable Auslander-Reiten quiver of a Frobenius-Lusztig kernel is one of the three infinite Dynkin diagrams. For the special case of the small quantum group we show that the periodic components are homogeneous tubes and that the non-periodic components have shape $\mathbb{Z}[\mathbb{A}_\infty]$ if the component contains a module for the infinite-dimensional quantum group.
\end{abstract}

\maketitle

\section*{Introduction}

In the representation theory of algebras, two cases have served as paradigms, that are group algebras and hereditary algebras. It was proven by Ringel \cite{Rin78} and Erdmann \cite{Er95} that in both cases only one tree class can occur for a component of the stable Auslander-Reiten quiver in the case of wild representation type. Building on these results the Auslander-Reiten theory of other classes of wild algebras has been studied (see e.g. \cite{LdlP97}, \cite{Er96}, \cite{BE11}).\\

In 1990, Lusztig \cite{L90} defined a finite dimensional quantum group, the small quantum group, which has several similarities with restricted enveloping algebras although defined in characteristic zero. In 2009, Drupieski \cite{DruPhD} gave a generalization to positive characteristic, that gives also quantum analogues of higher Frobenius kernels, which he called (higher) Frobenius-Lusztig kernels. Building on results by Feldvoss and Witherspoon in \cite{FW09} in a recent paper \cite{K11}, the author has shown that the non-simple blocks of a Frobenius-Lusztig kernel are of wild representation type in all but two cases. Therefore the class of Frobenius-Lusztig kernels can also serve as an example for understanding the Auslander-Reiten theory of wild self-injective algebras.\\

For self-injective algebras where the modules have finite complexity (which was proven for the Frobenius-Lusztig kernels in \cite{Dru11}), Kerner and Zacharia provided a version of Webb's Theorem, that limits the tree classes of components to Euclidean and infinite Dynkin diagrams. For Frobenius-Lusztig kernels we are able to boil them down and prove

\begin{thma}
Let $\mathfrak{g}\neq \mathfrak{sl}_2$ be a finite dimensional complex simple Lie algebra. Then all non-periodic components of the Frobenius-Lusztig kernel associated to $\mathfrak{g}$ are of the form $\mathbb{Z}[\mathbb{A}_\infty]$, $\mathbb{Z}[\mathbb{D}_\infty]$ or $\mathbb{Z}[\mathbb{A}_\infty^\infty]$. In the case of the small quantum group, if the component additionally contains the restriction of a module for the (infinite dimensional) Lusztig form of the quantum group, then it can only be $\mathbb{Z}[\mathbb{A}_\infty]$.
\end{thma}

For the special case where the tree class is the Kronecker quiver we can even give a full classification of all algebras having such a component. As a corollary we obtain that all such algebras are special biserial.\\

In this article, $k$ denotes an algebraically closed field. All vector spaces will be assumed to be finite dimensional unless otherwise stated. If $G$ is a semisimple algebraic group, the corresponding Lie algebra will be denoted by $\mathfrak{g}$, the set of roots will be denoted by $\Phi$, the set of simple roots is denoted $\Pi$, the corresponding set of positive roots is denoted by $\Phi^+$, $h$ is the Coxeter number. For general theory we refer the reader to \cite{Jan03}.\\

For a general introduction to Auslander-Reiten theory we refer the reader to \cite{ARS95}, \cite{ASS06} or \cite{B95}. We denote the syzygy functor by $\Omega$ and the Auslander-Reiten translation by $\tau$.\\

Our paper is organized as follows: In Section 1 we recall the basic definitions and results in the theory of support varieties that are needed in the remainder. Section 2 is devoted to a recapitulation of the definition of Frobenius-Lusztig kernels and their properties. Section 3 recalls Webb's Theorem for Frobenius-Lusztig kernels and describes the periodic components in more detail. Section 4 classifies all algebras having a component of Kronecker tree class in terms of quivers and relations. In Section 5 we prove statements on graded modules over quantum groups and Frobenius extensions that we need in the proof of our main result. Section 6 contains the statement and proof of the main result. In Section 7 we study the case that the tree class is $\mathbb{A}_\infty$ in more detail and prove statements about simple modules in such components.

\section{Support varieties for (fg)-Hopf algebras}

The notion of a support variety was first defined for group algebras and later generalized to various classes of algebras, whose cohomology satisfies certain finiteness conditions. In our case the following one is satisfied:\\

A finite dimensional Hopf algebra $A$ is said to satisfy (fg), if the even cohomology ring $H^{ev}(A,k)$ is finitely generated and the $H^{ev}(A,k)$-modules $\Ext^\bullet(M,N)$ are finitely generated for all finite dimensional $A$-modules $M$ and $N$.\\

If $A$ is an (fg)-Hopf algebra, then the support variety of a module $M$ is defined as the variety $\mathcal{V}(M)$ associated to the ideal $\ker \Phi_M$, where $\Phi_M:H^{ev}(A,k)\to \Ext^{\bullet}_A(M,M)$ is induced by $-\otimes M$. In the remainder of this section we will list some properties of these varieties that we will use in this paper. The proofs for the group algebra case (see e.g. \cite{B91}) generalize without much difficulty. Some of them may be found in \cite{Bro98} or \cite{FW09}.

\begin{prop}\label{generalsupportfacts}
Let $A$ be an (fg)-Hopf algebra. Then:
\begin{enumerate}[(i)]
\item $\dim \mathcal{V}(M)=\cx M=\gamma(\Ext^\bullet_A(M,M))$, where $\cx M$, the complexity of $M$, is the rate of growth $\gamma$ of a minimal projective resolution of $M$. For a sequence of vector spaces $V_n$, the rate of growth $\gamma(V_*)$ is the smallest number $d$, such that there exists a constant $C>0$ with $\dim V_n\leq Cn^{d-1}$.
\item $M$ is projective iff $\cx M=0$.
\item $M$ is $\Omega$-periodic iff $\cx M=1$. Furthermore if the even cohomology ring is generated in degree $n$, then the $\Omega$-period of every periodic module is a divisor of $n$.
\item If $M$ is indecomposable then $\mathbb{P}\mathcal{V}(M)$, the projectivized support variety for $M$, is connected.
\item $\mathcal{V}(M)=\mathcal{V}(N)$ if $M$ and $N$ belong to the same connected component of the stable Auslander-Reiten quiver.
\end{enumerate}
\end{prop}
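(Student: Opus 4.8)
The plan is to check the five items in turn, in each case transporting the proof from the group-algebra setting (cf. \cite{B91}); the structural inputs beyond the hypothesis (fg) are that $A$, being a finite-dimensional Hopf algebra, is Frobenius and hence self-injective, and the tensor product property $\mathcal V(M\otimes N)=\mathcal V(M)\cap\mathcal V(N)$, which holds because $k$ is a direct summand of $M^{*}\otimes M$ (both are among the facts established for the present class of algebras in \cite{Bro98} and \cite{FW09}). Write $R=H^{ev}(A,k)$, a finitely generated commutative graded $k$-algebra. For (i), by definition $\mathcal V(M)=\maxSpec\bigl(R/\ker\Phi_M\bigr)\cong\maxSpec(\image\Phi_M)$, so $\dim\mathcal V(M)=\gamma(\image\Phi_M)$ by the usual identification of Krull dimension with rate of growth for finitely generated graded algebras. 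By (fg) the ring $\image\Phi_M$ is Noetherian and $\Ext^\bullet_A(M,M)$ is a finitely generated $\image\Phi_M$-module containing $\image\Phi_M=\image\Phi_M\cdot\id_M$ as a submodule, so $\gamma(\image\Phi_M)=\gamma(\Ext^\bullet_A(M,M))$. Finally, comparing a minimal projective resolution $P_\bullet\to M$ with $\Ext^\bullet_A(M,A/\rad A)=\Hom_A(P_\bullet,A/\rad A)$ and using that the indecomposable projectives have bounded dimension yields $\cx M=\gamma(\Ext^\bullet_A(M,A/\rad A))$; since the trivial module $k$ is one of the simple modules, $\mathcal V(A/\rad A)=\bigcup_S\mathcal V(S)=\maxSpec R$, so the tensor product property gives that $\Ext^\bullet_A(M,A/\rad A)$ has support $\mathcal V(M)\cap\mathcal V(A/\rad A)=\mathcal V(M)$, whence $\cx M=\dim\mathcal V(M)$.

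Items (ii) and (iii) are then read off from what small complexity means for the minimal resolution. For (ii), $\cx M=0$ forces $\dim P_n=0$ for $n\gg 0$, i.e. $\mathrm{pd}_A M<\infty$, and since $A$ is self-injective this means $M$ is projective; the converse is clear. For (iii), if $M$ is $\Omega$-periodic then $(\dim\Omega^n M)_n$ is bounded and, for $M$ non-projective, bounded away from zero, so $\cx M=1$. Conversely, if $\cx M=1$ then $\image\Phi_M$ has Krull dimension one, and the standard argument with Carlson modules (cf. \cite{B91}) produces a homogeneous $\zeta\in R$ of positive degree such that the map $\Omega^{\lvert\zeta\rvert}M\to M$ representing $\zeta$ has kernel and cokernel of complexity $0$, hence projective by (ii); therefore $\Omega^{\lvert\zeta\rvert}M\cong M$ in the stable category and, both sides being summand-free, as $A$-modules, so $M$ is $\Omega$-periodic. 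When $R$ is generated in degree $n$ one may take $\lvert\zeta\rvert=n$, so the $\Omega$-period divides $n$.

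For (iv) and (v) one additionally uses the behaviour of $\mathcal V$ under syzygies and the Nakayama functor $\nu$, namely $\mathcal V(\Omega^{\pm 1}X)=\mathcal V(X)$ and $\mathcal V(\nu X)=\mathcal V(X)$, together with the tensor product property. Granting these, (iv) is Carlson's connectivity theorem: if $\mathbb P\mathcal V(M)$ were disconnected, writing $\mathcal V(M)=V_1\cup V_2$ with $V_1\cap V_2=\{0\}$, one uses Carlson's $L_\zeta$-modules and the tensor product property to split off a direct summand of $M$ with variety $V_1$, contradicting indecomposability — this is verbatim the argument of \cite{B91}. For (v), $\tau\cong\Omega^2\nu$ on non-projective modules gives $\mathcal V(\tau C)=\mathcal V(C)$, and the standard analysis of an almost split sequence $0\to\tau C\to E\to C\to 0$ via the long exact $\Ext$-sequences (again as in \cite{B91}) gives $\mathcal V(N')=\mathcal V(C)$ for every indecomposable summand $N'$ of $E$; since every arrow of the stable Auslander-Reiten quiver arises from such a sequence, $\mathcal V$ is constant on each component. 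The only point where the passage from group algebras is not completely formal is the verification — carried out in \cite{Bro98} and \cite{FW09} — that the Hopf structure supplies self-injectivity and, more substantially, the splitting $k\mid M^{*}\otimes M$ underlying the tensor product property; this is therefore the step I would regard as the main obstacle, the remaining content being the finite-generation bookkeeping of (i)--(iii).
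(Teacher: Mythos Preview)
The paper does not actually prove this proposition: immediately before stating it, the author writes that ``the proofs for the group algebra case (see e.g.\ \cite{B91}) generalize without much difficulty'' and that ``some of them may be found in \cite{Bro98} or \cite{FW09}''. Your proposal is precisely a fleshed-out version of that remark, invoking the same references and the same transported arguments (Carlson $L_\zeta$-modules, the tensor-product formula via $k\mid M^*\otimes M$, Carlson connectivity, and the almost-split-sequence analysis for constancy on components), so it is the same approach and supplies strictly more detail than the paper itself.
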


The following statement is a generalization of \cite[Lemma 2.1]{Fa95} and \cite[Theorem 1.1]{Fa11}:

\begin{thm}\label{upperbound}
Let $A$ be an (fg)-Hopf algebra. Let $M$ be a module, such that the commutative graded subalgebra $S:=\image \Phi_M\subseteq \Ext^\bullet_A(M,M)$ is generated by $\bigoplus_{b|a} S_b$ for some $a\in \mathbb{N}$. Then $\cx M\leq \dim \Ext^{an}_A(M,M)$ for all $n\geq 1$.
\end{thm}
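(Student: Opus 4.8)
The plan is to bound the complexity $\cx M = \gamma(\Ext^\bullet_A(M,M))$ by exploiting that $\Ext^\bullet_A(M,M)$ is a finitely generated module over the commutative graded algebra $S = \image \Phi_M$, and that $S$ in turn is generated by elements in degrees dividing $a$. First I would pass to the Veronese-type subalgebra: set $R := \bigoplus_{n \geq 0} S_{an}$, which by the generation hypothesis on $S$ is a finitely generated commutative graded $k$-algebra generated in a single degree (degree $a$, i.e.\ internal degree $1$ after rescaling). The key structural observation is that $E := \bigoplus_{n\geq 0} \Ext^{an}_A(M,M)$ is a finitely generated graded $R$-module; this follows from the (fg) hypothesis (so $\Ext^\bullet_A(M,M)$ is finitely generated over $H^{ev}(A,k)$, hence over $S$ once we know $S$ is noetherian, which it is as a quotient of $H^{ev}(A,k)$) together with the elementary fact that a finitely generated graded module over a graded-commutative algebra generated in degrees dividing $a$ decomposes, and its degree-$\equiv 0 \pmod a$ part is finitely generated over the corresponding Veronese subalgebra.

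Next I would invoke the standard Hilbert-series/rate-of-growth comparison. Since $R$ is generated in degree $a$, there is a graded surjection from a polynomial ring $k[x_1,\dots,x_r]$ (with the $x_i$ in degree $a$, $r = \dim_k S_a$, or more precisely $r = \dim_k R_a$) onto $R$, and hence $E$ is a finitely generated graded module over this polynomial ring as well. A finitely generated graded module over $k[x_1,\dots,x_r]$ generated in degrees bounded by some $N$ satisfies $\dim_k E_m \leq$ (number of monomials of degree $\leq$ something) which grows at most like a polynomial of degree $r-1$ in $m/a$; more importantly, comparing with a single free generator one gets $\dim_k E_{am} \leq (\dim_k E_{am_0}) \cdot \binom{m - m_0 + r - 1}{r-1}$ for $m \geq m_0$ past the top generator degree, but the cleanest route is: the rate of growth of the sequence $(\dim_k \Ext^{an}_A(M,M))_n$ equals $\dim R \leq r = \dim_k S_a$. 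Combining, $\cx M = \gamma(\Ext^\bullet_A(M,M)) = \gamma((\dim \Ext^{n})_n)$, and since only the degrees divisible by $a$ contribute to the rate of growth of a module over $S$ (the intermediate degrees are controlled by finitely many module generators times the $R$-module $E$), we get $\cx M = \gamma((\dim \Ext^{an}_A(M,M))_n) \leq \dim_k \Ext^{a}_A(M,M)$, and then the same argument applied with $a$ replaced by $an$ (note $S$ is a fortiori generated in degrees dividing $an$) yields $\cx M \leq \dim_k \Ext^{an}_A(M,M)$ for every $n \geq 1$.

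I would be careful about one subtlety: $\Ext^\bullet_A(M,M)$ need not itself be commutative, so I should only ever use the commutative subalgebra $S$ (and its Veronese $R$) for the module-theoretic arguments, treating $\Ext^\bullet_A(M,M)$ purely as a graded $S$-module via $\Phi_M$ and the Yoneda action. The (fg) condition guarantees it is finitely generated over $H^{ev}(A,k)$ and hence — since $\Phi_M$ has image $S$ and $H^{ev}(A,k)$ acts through $S$ — finitely generated over the noetherian ring $S$. The other point to check is that the generators of $E$ over $R$ occur in boundedly many internal degrees, which is immediate from finite generation over $S$ together with $S$ being finite as a module over $R$ (as $S$ is generated in degrees dividing $a$).

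The main obstacle I anticipate is the bookkeeping that isolates the contribution of the degrees divisible by $a$ to the rate of growth: one must argue that $\gamma(\Ext^\bullet_A(M,M)) = \gamma\bigl((\dim_k \Ext^{an}_A(M,M))_{n\geq 1}\bigr)$, i.e.\ that inserting or deleting the intermediate degrees does not change the rate of growth. This is true because $\Ext^\bullet_A(M,M) = \bigoplus_{i=0}^{a-1} \Ext^{\bullet \equiv i}_A(M,M)$ as a direct sum of $R$-modules (up to a degree shift), each of which is finitely generated over $R$, so each graded piece has $\dim_k$ growing with the same rate $\leq \dim R$, and the dimension in degree $am$ dominates (up to a bounded factor) each nearby degree via multiplication by a suitable element — or, more simply, the whole module is finitely generated over $R \subseteq S$, so its Hilbert function restricted to any arithmetic progression with common difference $a$ has the same rate of growth. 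Once this is in hand, bounding $\dim R$ by $\dim_k R_a \leq \dim_k S_a \leq \dim_k \Ext^a_A(M,M)$ (and its $n$-th analogue) closes the argument. Everything else — Hilbert polynomials of modules over polynomial rings, the Veronese construction — is standard commutative algebra and I would cite it rather than reprove it.
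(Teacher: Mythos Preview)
Your proposal is correct and follows essentially the same route as the paper: reduce to showing that $\Ext^\bullet_A(M,M)$ is a finitely generated module over the subalgebra of $S$ generated by $S_{an}$ (the paper calls this $T_{(n)}$ and gets finiteness via integrality, citing \cite[Corollary 4.5]{Eis95}), and then bound the rate of growth by the number of algebra generators, i.e.\ by $\dim_k S_{an}\leq \dim_k\Ext^{an}_A(M,M)$. The only cosmetic difference is that you phrase things in terms of the Veronese subalgebra $\bigoplus_m S_{am}$ rather than $T_{(n)}$; since $T_{(n)}$ sits inside the Veronese and already suffices, your extra assertion that the Veronese is generated in degree $a$ is not actually needed.
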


\begin{proof}
Denote by $T_{(n)}$ the subalgebra of $S$ generated by the subspace $S_{an}$ of homogeneous elements of degree $an$. Since $S$ is generated by $\bigoplus_{b|a} S_b$ as an algebra, it follows that it is generated as a $T_{(n)}$-algebra by finitely many integral elements. By \cite[Corollary 4.5]{Eis95} it is therefore finitely generated as a $T_{(n)}$-module. Since $\Ext^\bullet_A(M,M)$ is finitely generated as an $S$-module, we also have that it is finitely generated as a $T_{(n)}$-module. Hence $\cx M=\gamma(\Ext^\bullet_A(M,M))=\gamma(T_{(n)})\leq \dim S_{an}\leq \dim \Ext^{an}_A(M,M)$, where the inequalities follow from the foregoing proposition, as $\Ext^\bullet(M,M)$ is a finitely generated module over $T_{(n)}$, the fact that equality holds for the polynomial ring in finitely many variables, and the fact that $S_{an}$ is a subspace of $\Ext^{an}_A(M,M)$, respectively.
\end{proof}

\section{Frobenius-Lusztig kernels}

In this section we recall some basic properties of Frobenius-Lusztig kernels that we use throughout this paper.\\

Let $\ell\geq 3$ be an odd integer (We also suppose that $3$ does not divide $\ell$ if the root system mentioned in the statements contains a component of type $\mathbb{G}_2$). We fix a primitive $\ell$-th root of unity $\zeta$ and denote by $U_\zeta(\mathfrak{g})$ the Lusztig form of the quantum group at a root of unity $\zeta$ (cf. \cite{DruPhD}). For $r\in \mathbb{N}_0$ the $r$-th Frobenius-Lusztig kernel $U_\zeta(G_r)$ is defined as the subalgebra of $U_\zeta(\mathfrak{g})$ generated by the set $\{E_\alpha, E_\alpha^{(p^i\ell)}, F_\alpha, F_\alpha^{(p^i\ell)}, K_\alpha^{\pm 1}|\alpha\in \Pi, 0\leq i<r\}$, where $E_\alpha^{(n)}$ and $F_\alpha^{(n)}$ denote the $n$-th divided powers (see e.g. \cite{Jan96}). Note that in the case of characteristic zero only the zeroth Frobenius-Lusztig kernel exists. The zeroth Frobenius-Lusztig kernel is also called small quantum group.\\

For a simple algebraic group scheme $G$, denote its $r$-th Frobenius kernel by $G_r$ and the corresponding algebra of distributions, i.e. the dual of the coordinate ring of $G_r$, by $\Dist(G_r)$ (see e.g. \cite{Jan03}). Then $U_\zeta(G_0)$ is a normal Hopf subalgebra of $U_\zeta(G_r)$ with Hopf quotient isomorphic to $\Dist(G_r)$. Recall that all simple modules for $U_\zeta(G_0)$ are also modules for $U_\zeta(\mathfrak{g})$ and hence for all $U_\zeta(G_r)$ and that for every $r$ there exists exactly one simple projective $U_\zeta(G_r)$-module denoted by $\St_{p^r\ell}$. Furthermore each simple $U_\zeta(G_r)$-module is a tensor product of a $\Dist(G_r)$-module (viewed as a module for $U_\zeta(G_r)$) and a $U_\zeta(G_0)$-module. In \cite{K11} we proved that there is another embedding $F$ of $\modu \Dist(G_r)$ into $\modu U_\zeta(G_r)$, induced by tensoring with $\St_\ell$, whose image is a direct sum of (categorical) blocks.\\

The main result of \cite{K11} was the following:

\begin{thm}
Let $\mathfrak{g}$ be a finite dimensional complex simple Lie algebra. Let $\ell>1$ be an odd integer not divisible by $3$ if $\Phi$ is of type $\mathbb{G}_2$.
\begin{enumerate}[(i)]
\item The only representation-finite block of $U_\zeta(G_r)$ is the semisimple block corresponding to $\St_{p^r\ell}$.
\item Furthermore for $\charac k=0$ assume that $\ell$ is good for $\Phi$ (i.e. $\ell \geq 3$ for type $\mathbb{B}_n$, $\mathbb{C}_n$ and $\mathbb{D}_n$, $\ell\geq 5$ for type $\mathbb{E}_6$, $\mathbb{E}_7$ and $\mathbb{G}_2$ and $\ell\geq 7$ for $\mathbb{E}_8$) and that $\ell>3$ if $\Phi$ is of type $\mathbb{B}_n$ or $\mathbb{C}_n$. For $\charac k=p>0$ assume that $p$ is good for $\Phi$ and that $\ell>h$. If $U_\zeta(G_r)$ satisfies (fg), then $U_\zeta(G_r)$ is tame iff $G=\SL_2$ and $r=0$ or $r=1$ and the block lies in the image of $F$.
\end{enumerate}
\end{thm}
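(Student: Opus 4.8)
\emph{Proof plan.} The idea is to reduce the determination of the representation type to a computation of the complexities of the simple modules, using the support-variety formalism of Section~1 together with the structure of $U_\zeta(G_r)$ recalled above. Two general facts drive this. First, a representation-finite self-injective algebra has all its non-projective indecomposables $\Omega$-periodic (since $\Omega$ permutes the finitely many of them), so by Proposition~\ref{generalsupportfacts}(iii) every module in a representation-finite block has complexity at most $1$. Second, the wildness criterion of Feldvoss--Witherspoon in \cite{FW09}: a block of a self-injective (fg)-algebra containing a module of complexity $\ge 3$ has wild representation type. Since the Steinberg block is semisimple, hence representation-finite and not wild, proving (i) amounts to exhibiting in every non-semisimple block a module of complexity $\ge 2$, and, given that, proving (ii) amounts to singling out the non-semisimple blocks of complexity exactly $2$ and deciding which of those are tame.

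\emph{A complexity bound by restriction.} By the tensor-product theorem for simple modules, $L(\lambda)=M_0\otimes M_1$ where $M_0$ is a simple $U_\zeta(G_0)$-module and $M_1$ is inflated along $U_\zeta(G_r)\twoheadrightarrow\Dist(G_r)$. As $U_\zeta(G_r)$ is free over the normal Hopf subalgebra $U_\zeta(G_0)$ (Nichols--Zoeller), a minimal projective resolution restricts to a projective resolution, so $\cx_{U_\zeta(G_r)}L(\lambda)\ge\cx_{U_\zeta(G_0)}\bigl(L(\lambda)|_{U_\zeta(G_0)}\bigr)$; and since $U_\zeta(G_0)$ acts trivially on $M_1$, this restriction is a direct sum of copies of $M_0$, whence $\cx_{U_\zeta(G_r)}L(\lambda)\ge\cx_{U_\zeta(G_0)}M_0$. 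Under the hypotheses of (ii) the even cohomology ring of $U_\zeta(G_0)$ is, by Ginzburg--Kumar, the coordinate ring of the nilpotent cone $\mathcal{N}\subseteq\mathfrak{g}$, and the support variety of a simple $U_\zeta(G_0)$-module is known to be $G$-stable, hence a union of nilpotent orbit closures; if $M_0$ is not the Steinberg module it is non-projective, so this variety is non-zero and contains the closure of a non-zero nilpotent orbit, hence has dimension at least $2$, and in fact at least $4$ when $\mathfrak{g}\ne\mathfrak{sl}_2$ (the minimal non-zero nilpotent orbit has dimension $2$ for $\mathfrak{sl}_2$ and dimension at least $4$ for every other simple $\mathfrak{g}$). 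For part (i), where these cohomology computations are not assumed, the weaker bound $\cx_{U_\zeta(G_0)}M_0\ge 2$ for non-projective $M_0$ can be gotten from a complexity lower bound supplied by a Borel-type subalgebra of $U_\zeta(G_0)$. Either way: every non-semisimple block whose $U_\zeta(G_0)$-part is not Steinberg has a module of complexity $\ge 2$, which gives (i) for these blocks, and a module of complexity $\ge 4$, hence wild, once $\mathfrak{g}\ne\mathfrak{sl}_2$.

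\emph{The blocks in the image of $F$.} The non-semisimple blocks with Steinberg $U_\zeta(G_0)$-part are precisely those in the image of $F\colon\modu\Dist(G_r)\to\modu U_\zeta(G_r)$, an equivalence onto a direct sum of blocks; so their representation type equals that of the corresponding blocks of the distribution algebra $\Dist(G_r)$. The representation type of the blocks of Frobenius kernels is already known (Farnsteiner and collaborators): the only representation-finite blocks are semisimple, and tame ones occur exactly for $G=\SL_2$, $r=1$. Together with the previous paragraph this proves (i) in full and proves (ii) for every block except those with $G=\SL_2$ and non-Steinberg $U_\zeta(G_0)$-part; for $r=0$, $G=\SL_2$, the unique non-semisimple block of the small quantum group is, by inspection of its quiver and relations, special biserial and tame --- the exceptional case of (ii).

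\emph{The remaining case.} It remains to show that for $G=\SL_2$ and $r\ge 1$ the non-semisimple blocks not in the image of $F$ are wild. Here the restriction bound only gives complexity $2$, so one must compute support varieties over $U_\zeta(G_r)$ directly, via the Lyndon--Hochschild--Serre spectral sequence for $U_\zeta(G_0)\triangleleft U_\zeta(G_r)$ with quotient $\Dist(G_r)$, combining the Ginzburg--Kumar description of $H^{ev}(U_\zeta(G_0),k)$ with the Suslin--Friedlander--Bendel description of $H^{ev}(\Dist(G_r),k)$, in order to exhibit a module of complexity $\ge 3$ and conclude by the Feldvoss--Witherspoon criterion. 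Identifying exactly which blocks of $U_\zeta(G_r)$ occur --- a Clifford-theoretic analysis relative to the normal subalgebra $U_\zeta(G_0)$ --- and carrying through this last complexity estimate is where I expect the real difficulty to lie; the hypotheses on $\ell$, $p$ and $h$ in (ii) are precisely what make the necessary cohomology computations and the (fg)-property available.
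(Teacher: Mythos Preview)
This theorem is not proved in the present paper; it is quoted verbatim as ``the main result of \cite{K11}'' and used as input for the Auslander--Reiten analysis that follows. There is therefore no proof here to compare against. That said, your outline matches the strategy the paper attributes to \cite{K11}: the introduction states explicitly that the result was obtained ``building on results by Feldvoss and Witherspoon in \cite{FW09}'', i.e.\ via the wildness criterion through support varieties and complexity, combined with the reduction to $\Dist(G_r)$ through the block embedding $F$ of \cite[Proposition~1.3]{K11}. Your use of the Steinberg tensor product decomposition, the restriction bound $\cx_{U_\zeta(G_r)}\ge\cx_{U_\zeta(G_0)}$, the $G$-stability of support varieties of $U_\zeta(\mathfrak g)$-modules, and the minimal-orbit dimension argument are all in the right spirit and are consistent with how the paper itself argues later (see the proof of the theorem in Section~6, which exploits exactly the orbit-dimension mechanism).

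Two places in your sketch are genuinely incomplete rather than merely terse. First, for part~(i) you need $\cx_{U_\zeta(G_0)}M_0\ge 2$ for every non-Steinberg simple \emph{without} the good-prime/$\ell>h$ hypotheses, and ``a complexity lower bound supplied by a Borel-type subalgebra'' is not an argument; you would need, at minimum, that the half-quantum group is not periodic on any non-projective simple, and that requires work. Second, you yourself flag the $G=\SL_2$, $r\ge 1$, non-Steinberg case as the hard one and leave it open. That is exactly where the content of \cite{K11} lies: one must produce, in each such block, a module of complexity $\ge 3$ over $U_\zeta(G_r)$ (the restriction bound only gives $2$), and this is done there via an explicit support-variety computation rather than a soft spectral-sequence estimate. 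So your plan is correct in shape, but it stops short of the two technical points that actually carry \cite{K11}.
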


\section{Webb's Theorem}

In \cite{KZ11} Kerner and Zacharia have shown that for self-injective algebras, components of finite complexity have a  very special shape. In our context their theorem reads as follows:

\begin{thm}
If $\Theta$ is a non-periodic component (i.e. there does not exist $M\in \Theta$ with $\tau^mM\cong M$ for some $m>0$ or equivalently $\Omega^mM\cong M$ since the algebra is symmetric and hence $\tau\cong \Omega^2$) of the stable Auslander-Reiten quiver of $U_\zeta(G_r)$, then $\Theta\cong \mathbb{Z}[\Delta]$, where $\Delta$ is a Euclidean or infinite Dynkin diagram (for the Euclidean diagram $\tilde{\mathbb{A}}_n$ one has to take a non-cyclic orientation when constructing $\mathbb{Z}[\tilde{\mathbb{A}}_n]$).
\end{thm}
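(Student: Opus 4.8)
The plan is to obtain this directly from the theorem of Kerner and Zacharia, by checking that $U_\zeta(G_r)$ meets its two hypotheses and, if necessary, deleting the finite Dynkin diagrams from its list of tree classes.

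First I would record the two hypotheses. The algebra $U_\zeta(G_r)$ is finite-dimensional and self-injective --- indeed symmetric, which is why $\tau\cong\Omega^2$ and why $\tau$-periodicity and $\Omega$-periodicity coincide --- so that its stable Auslander--Reiten quiver is defined and every connected component is a connected stable translation quiver without boundary. The second hypothesis, that every finite-dimensional $U_\zeta(G_r)$-module has finite complexity, is precisely Drupieski's theorem in \cite{Dru11} (combined with Proposition~\ref{generalsupportfacts}, the complexity is constant along a component). Granting both, \cite{KZ11} applies to an arbitrary non-periodic component $\Theta$ and yields, via Riedtmann's structure theorem, that $\Theta\cong\mathbb{Z}[\Delta]/G$ with $\Delta$ a Dynkin, Euclidean or infinite Dynkin diagram; since $\Theta$ is non-periodic the admissible group $G$ contains no non-zero power of $\tau$, so it acts only through graph automorphisms of $\Delta$, and folding these in gives $\Theta\cong\mathbb{Z}[\Delta']$ with $\Delta'$ again of one of these three types. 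The only subtlety in this last step is that $\mathbb{Z}[\tilde{\mathbb{A}}_n]$ is unambiguous only after a non-cyclic orientation of $\tilde{\mathbb{A}}_n$ has been fixed, which is exactly the parenthetical remark in the statement.

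It remains to discard the finite Dynkin case. For this I would invoke \cite{K11}: a component of the stable Auslander--Reiten quiver consists of non-projective modules, so $\Theta$ cannot lie in the semisimple block of $\St_{p^r\ell}$, and by \cite{K11} every other block of $U_\zeta(G_r)$ has infinite representation type. But a connected self-injective algebra of infinite representation type has no stable Auslander--Reiten component of finite Dynkin tree class: such a component cannot be finite, for then by Auslander's theorem it would be the whole stable Auslander--Reiten quiver and the algebra would be representation-finite; and it cannot be infinite of the form $\mathbb{Z}[\Delta']$ with $\Delta'$ finite Dynkin either, since only finitely many Auslander--Reiten sequences have a projective middle term, so the length function $M\mapsto\dim_k M$ would be additive on all but finitely many meshes, hence eventually $\tau$-periodic --- additive functions on $\mathbb{Z}[\Delta']$ are $\tau$-periodic for $\Delta'$ finite Dynkin, the Coxeter transformation having finite order --- and therefore bounded, again forcing representation-finiteness. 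This contradiction leaves $\Delta$ Euclidean or infinite Dynkin, as claimed.

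I do not expect a real obstacle: the content is the reduction to \cite{KZ11}, and the only points that demand care are the standard bookkeeping between the combinatorial output of \cite{KZ11} and genuine Auslander--Reiten components --- in particular the effect of Auslander--Reiten sequences with projective(-injective) middle terms on the mesh relations --- and the folding of graph symmetries into the diagram $\Delta$.
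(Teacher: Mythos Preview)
Your proposal is correct and follows essentially the same route as the paper: verify finite complexity via \cite{Dru11} and invoke \cite[Main Theorem]{KZ11}. The additional argument you give to exclude finite Dynkin tree classes is not needed, since the Main Theorem of \cite{KZ11} already rules these out for non-periodic components of finite complexity; the paper therefore concludes in one sentence.
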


\begin{proof}
In \cite[Theorem 6.3.1]{Dru11} it is shown that all modules have finite complexity. Hence the result is a direct consequence of \cite[Main Theorem]{KZ11}.
\end{proof}

The question of periodic components was settled much earlier by Happel, Preiser and Ringel in \cite{HPR80}. They showed that periodic components are either finite, or infinite tubes, i.e. components of the form $\mathbb{Z}[\mathbb{A}_\infty]/\tau^m$. For $r=0$ we can boil these possibilities down to only one:

\begin{thm}
Let $\Theta$ be a periodic component of the stable Auslander-Reiten quiver of $U_\zeta(G_r)$. Then $\Theta$ is an infinite tube. Assume further that $r=0$ and for $\charac k=0$ let $\ell$ be a good prime for $\Phi$ and $\ell>3$ for types $\mathbb{B}$ and $\mathbb{C}$ and $\ell\nmid n+1$ for type $\mathbb{A}_n$ and $\ell\neq 9$ for $\mathbb{E}_6$, for $\charac k=p>0$ let $p$ be good for $\Phi$ and $\ell\geq h$. Then $\Theta$ is a homogeneous tube, i.e. $\Theta\cong \mathbb{Z}[\mathbb{A}_\infty]/\tau$.
\end{thm}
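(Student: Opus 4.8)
The plan is to combine the structural results already in hand with the complexity bound of Theorem~\ref{upperbound}. By the theorem of Happel--Preiser--Ringel, a periodic component is a tube $\mathbb{Z}[\mathbb{A}_\infty]/\tau^m$, so it suffices to rule out $m\geq 2$, i.e.\ to show that a periodic module $M$ in a periodic component has a boundary (quasi-simple) module $N$ in its component with $\tau N\cong N$; equivalently, since the algebra is symmetric, $\Omega^2 N\cong N$. Because support varieties are constant on components (Proposition~\ref{generalsupportfacts}(v)), every module in the component has complexity $1$, so it is $\Omega$-periodic by Proposition~\ref{generalsupportfacts}(iii). The task is thus to show that the quasi-simple module at the end of the tube is actually $\tau$-periodic of period exactly one.

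\textbf{Reducing the period via the complexity bound.} The key point is that for a module $M$ of complexity $1$ one has, by Proposition~\ref{generalsupportfacts}(i), that $\Ext^\bullet_A(M,M)$ has rate of growth $1$, so $\dim \Ext^n_A(M,M)$ is bounded by a constant. First I would argue that for $U_\zeta(G_0)$ under the stated hypotheses the even cohomology ring $H^{\mathrm{ev}}(U_\zeta(G_0),k)$ is generated in degree $2$: this is the analogue of the Friedlander--Parshall / Ginzburg--Kumar computation, and under the restrictions on $\ell$ (good prime, $\ell>3$ in types $\mathbb{B},\mathbb{C}$, $\ell\nmid n+1$ in type $\mathbb{A}_n$, $\ell\neq 9$ for $\mathbb{E}_6$, and $\ell\geq h$ in positive characteristic) one has $H^\bullet(U_\zeta(G_0),k)\cong k[\mathcal{N}]$ as a graded algebra with $\mathcal{N}$ the nilpotent cone, placed in degree $2$. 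Then, applying Theorem~\ref{upperbound} to $M$ with $a=2$ (the hypothesis on $S=\image\Phi_M$ being generated by $\bigoplus_{b\mid 2}S_b=S_1\oplus S_2$ holds since $H^{\mathrm{ev}}$ is generated in degree $2$ and $S_1$ contributes nothing new), we get $\cx M\leq \dim \Ext^{2n}_A(M,M)$ for all $n\geq 1$. Since $\cx M=1$, this only tells us the dimension is at least $1$, which is automatic; the useful direction comes from Proposition~\ref{generalsupportfacts}(iii): the $\Omega$-period divides $2$. So $\Omega M\cong M$ or $\Omega^2 M\cong M$ for every $M$ in the component.

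\textbf{From period dividing two to homogeneous tube.} Now I would use the period bound to pin down the tube. A tube $\mathbb{Z}[\mathbb{A}_\infty]/\tau^m$ has the property that the quasi-simple module $N$ on its mouth satisfies $\tau^m N\cong N$ but $\tau^j N\not\cong N$ for $0<j<m$, and the module at level $i$ (counting from the mouth) has $\tau$-period $m$ as well for $i$ not a multiple of $m$; more to the point, the $\Omega$-orbit structure forces $\Omega^2$ to act on the component as a combination of $\tau$ and the automorphism permuting the mouth. Since $\tau\cong\Omega^2$, having $\Omega^2$-period dividing $2$ means $\tau$-period of $N$ divides $2$ \emph{after} accounting for the reflection; chasing through the two cases ($\Omega M\cong M$ gives $\tau$-period $1$ directly via $\tau\cong\Omega^2$ and $\Omega M\cong M\Rightarrow \Omega^2M\cong M$; $\Omega^2M\cong M$ gives $\tau M\cong M$ for \emph{all} $M$ in the component, in particular the mouth, forcing $m=1$) shows $m=1$, i.e.\ the component is a homogeneous tube. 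I expect the main obstacle to be the cohomological input: verifying that $H^{\mathrm{ev}}(U_\zeta(G_0),k)$ is genuinely generated in degree $2$ under precisely the listed numerical hypotheses on $\ell$ (this is why the exclusions $\ell\neq 9$ for $\mathbb{E}_6$, $\ell\nmid n+1$ for $\mathbb{A}_n$, etc.\ appear), since the clean description of quantum group cohomology as functions on the nilpotent cone is known only under such restrictions; granting that, the Auslander-Reiten combinatorics is routine.
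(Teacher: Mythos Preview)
Your approach to the $r=0$ homogeneous-tube assertion is essentially the paper's: the even cohomology ring is generated in degree~$2$ under the stated numerical hypotheses (the paper cites \cite{BNPP11} and \cite{Dru11} for this), so Proposition~\ref{generalsupportfacts}(iii) gives $\Omega$-period dividing~$2$, and since the algebra is symmetric $\tau\cong\Omega^2$ forces $\tau M\cong M$ for every $M$ in the component. Your detour through Theorem~\ref{upperbound} is unnecessary---you notice this yourself---and the discussion of ``$\Omega^2$ acting as a combination of $\tau$ and a mouth-permuting automorphism'' is superfluous: once $\Omega^2 M\cong M$ and $\tau\cong\Omega^2$, you are done immediately.

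There is, however, a genuine gap in the first assertion (valid for all $r$), namely that $\Theta$ is an \emph{infinite} tube. You write that Happel--Preiser--Ringel gives $\Theta\cong\mathbb{Z}[\mathbb{A}_\infty]/\tau^m$, but their theorem only says that a periodic component is either finite or an infinite tube; you have not excluded the finite case. The paper does this by invoking Auslander's theorem that a finite component forces the block to be representation-finite, together with the earlier result (from \cite{K11}) that the only representation-finite blocks of $U_\zeta(G_r)$ are semisimple---so a finite component cannot lie in the \emph{stable} quiver. You should add this step.
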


\begin{proof}
By \cite[p. 292, Theorem]{HPR80} $\Theta$ is either a finite component or an infinite tube. It cannot be a finite component since this is equivalent to the block containing $\Theta$ being representation-finite by a classical result of Auslander (cf. \cite[Theorem 5.4]{ASS06}). Furthermore for a block of $U_\zeta(G_r)$ being of finite representation type is the same as being semi-simple by our previous results in \cite{K11}. For $r=0$ even more holds since the even cohomology ring is generated in degree two by \cite[Theorem 1.2.3]{BNPP11} and \cite[Theorem 5.1.4]{Dru11}. By Proposition \ref{generalsupportfacts} this implies that the $\Omega$-period of every periodic module divides two. Since $U_\zeta(G_0)$ is symmetric, we have $\tau\cong \Omega^2$ and hence the $\tau$-period is one, i.e. the component is a homogenous tube.
\end{proof}

Our goal in the next sections will be to reduce the cases for non-periodic components that can occur.

\section{Kronecker components}

In this section we will classify (up to Morita equivalence) all self-injective algebras having a component of Kronecker type, i.e. a component isomorphic to $\mathbb{Z}[\tilde{\mathbb{A}}_{12}]$. To establish this result we need the following result due to Erdmann in the symmetric case, which generalizes to the self-injective case without difficulties. Note that a similar theorem due to Brenner and Butler \cite[Theorem 1.1]{BB98} states that this also holds even if the algebra is not self-injective. However in this case one has to assume that the algebra is tame.

\begin{prop}[{\cite[Theorem IV.3.8.3]{Er90}}, {\cite[Theorem 4.6, Proof]{Fa99b}}]
Let $A$ be a self-injective algebra. If $\Theta$ is a component of the stable Auslander-Reiten quiver isomorphic to $\mathbb{Z}[\tilde{\mathbb{A}}_{12}]$, then it is attached to a projective indecomposable module of length four. The lengths of the modules occurring in the non-stable component belonging to $\Theta$ are as follows:
\[\begin{xy}
\xymatrix{
\dots\ar@<2pt>[rd]\ar@<-2pt>[rd]&&5\ar@<2pt>[rd]\ar@<-2pt>[rd]\ar@{-->}[ll]&&1\ar@<2pt>[rd]\ar@<-2pt>[rd]\ar@{-->}[ll]&&5\ar@<2pt>[rd]\ar@<-2pt>[rd]\ar@{-->}[ll]&&\dots\ar@{-->}[ll]\\
&\dots\ar@<2pt>[ru]\ar@<-2pt>[ru]&&3\ar@<2pt>[ru]\ar@<-2pt>[ru]\ar@{-->}[ll]\ar[rd]&&3\ar@<2pt>[ru]\ar@<-2pt>[ru]\ar@{-->}[ll]&&\dots\ar@<2pt>[ru]\ar@<-2pt>[ru]\ar@{-->}[ll]\\
&&&&4\ar[ru]
}
\end{xy}\]
The projective indecomposable module satisfies $\heart P\cong S\oplus S$ for some simple module $S$, where $\heart P=\rad{P}/\soc P$ is the heart of the projective module.
\end{prop}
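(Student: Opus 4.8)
The plan is to reconstruct the whole non-stable component, together with the composition lengths of all its modules, from the combinatorics of $\mathbb{Z}[\tilde{\mathbb{A}}_{12}]$, the only genuinely module-theoretic input being the length of the projective vertex. This is in the spirit of Erdmann's argument, and I would carry it out in three steps.

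First I would show that a projective is attached. The composition length $\ell$ is a subadditive function on $\Theta$: for every $M\in\Theta$ one has $\ell(M)+\ell(\tau M)\geq\sum_{N\to M}\ell(N)$, the sum running over the arrows of $\Theta$, with equality at $M$ precisely when the almost split sequence ending at $M$ has no projective summand in its middle term. If the full component containing $\Theta$ had no projective vertex (equivalently, by self-injectivity, no injective vertex), then $\ell$ would be additive on $\Theta\cong\mathbb{Z}[\tilde{\mathbb{A}}_{12}]$; but a positive additive function on $\mathbb{Z}[\tilde{\mathbb{A}}_{12}]$ is, up to a positive scalar, the one afforded by the imaginary root, hence constant on $\tau$-orbits and so globally bounded, and a Harada--Sai type argument (using the sectional paths of unbounded length in $\mathbb{Z}[\tilde{\mathbb{A}}_{12}]$) then gives a contradiction. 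Thus there is a projective indecomposable $P$ with $\rad P$ and $P/\soc P=\tau^{-1}(\rad P)$ lying in $\Theta$, and the full component is $\Theta$ with the projective vertices adjoined.

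Next I would analyse the local picture at $P$. Comparing the valencies of $\mathbb{Z}[\tilde{\mathbb{A}}_{12}]$ at the vertex $\rad P$ with the almost split sequence $0\to\rad P\to P\oplus\heart P\to P/\soc P\to 0$ shows that exactly one projective is attached to $\Theta$ and that $\heart P=\rad P/\soc P\cong M\oplus M$ for a single indecomposable $M\in\Theta$; the double edge of $\tilde{\mathbb{A}}_{12}$ is exactly what forces the multiplicity two. Setting $p=\ell(P)$, the identities $\ell(\rad P)=\ell(P/\soc P)=p-1$ and $\ell(\heart P)=p-2$, together with the mesh relations and the single defect at $P$, determine $\ell$ on all of $\Theta$ as a function of $p$; in particular $\ell(M)=(p-2)/2$, so $p$ is even with $p\geq 4$.

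Finally it remains to see that $M$ is simple, equivalently that $p=4$, and this is the step I expect to be the main obstacle, since it is the only point not forced by the shape of $\Theta$. Here one must work on the module level: $\rad P=\Omega(\Kopf P)$ is indecomposable, has simple socle $\soc P$, and its radical quotient splits as $M\oplus M$; using that $P$ is simultaneously the injective hull of $\soc P$, one shows that $\ell(M)\geq 2$ is incompatible with $\rad P$ being indecomposable with this Loewy datum. Once $M\cong S$ is simple one gets $\ell(\heart P)=2$, hence $\ell(P)=4$ and $\heart P\cong S\oplus S$, and substituting $p=4$ into the recursion from the previous step produces the lengths $1,5,5,9,9,\dots$ along one $\tau$-orbit row of $\Theta$ and $3,3,7,7,11,11,\dots$ along the other, together with $\ell(P)=4$ --- which is exactly the displayed component.
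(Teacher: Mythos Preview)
The paper does not prove this proposition itself; it quotes it from \cite[Theorem~IV.3.8.3]{Er90} and \cite[Theorem~4.6]{Fa99b}. Your Steps~1 and~2 are essentially correct and standard, with one small slip: $\mathbb{Z}[\tilde{\mathbb{A}}_{12}]$ has no sectional path of length $\geq 2$, since every length-two walk $X\to Y\to Z$ satisfies $Z=\tau^{-1}X$. What one actually uses is Auslander's theorem that an Auslander--Reiten component whose modules have bounded length is finite; since your constant additive function bounds $\ell$ and $\mathbb{Z}[\tilde{\mathbb{A}}_{12}]$ is infinite, a projective must be attached.

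The genuine gap is in Step~3. You rightly identify it as the crux, but the proposed mechanism does not work: there is no Loewy-theoretic obstruction to an indecomposable module with simple socle whose quotient by the socle is $M\oplus M$ with $M$ non-simple, and indeed your own recursion in Step~2 produces a consistent positive length function on $\Theta$ for \emph{every} even $p\geq 4$. So nothing internal to $\rad P$ alone can force $p=4$. The missing idea is to first produce a simple module inside $\Theta$. Since $\Omega$ is a stable self-equivalence, $\Omega^{-1}\Theta$ is again a Kronecker component and, by Step~1, is attached to some projective $Q$; then $\soc Q=\Omega(Q/\soc Q)$ is simple and lies in $\Omega(\Omega^{-1}\Theta)=\Theta$. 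Now rerun the mesh relations starting from this simple $S$: the mesh at $S$ carries no projective (else $\ell(S)=\ell(P)-1\geq 3$), and at the adjacent mesh either both neighbours of $S$ again have length~$1$, or a projective $P$ with $\ell(P)=4$ and $\heart P\cong S\oplus S$ is attached there. Iterating the first alternative makes $\ell\equiv 1$ on all of $\Theta$, hence additive, contradicting Step~1. This $\Omega$-trick is precisely how the paper itself proceeds in the proof of the classification theorem immediately following the proposition.
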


By a more detailed analysis we now get to the classification of all algebras with Kronecker type components. All of them are special biserial, in particular tame (cf. \cite{WW85}). A special biserial algebra is the following:

\begin{defn}
An algebra $A$ is called \emphbf{special biserial} if $A$ is Morita equivalent to a path algebra with relations $kQ/I$, where $Q$ and $I$ satisfy the following:
\begin{enumerate}
\item[(SB 1)] In any vertex of $Q$ there start at most two arrows.
\item[(SB 1')] In any vertex of $Q$ there end at most two arrows.
\item[(SB 2)] If $a,b,c$ are arrows of $Q$ with $t(a)=s(b)=s(c)$, then one has $ba\in I$ or $ca\in I$.
\item[(SB 2')] If $a,b,c$ are arrows of $Q$ with $s(a)=t(b)=t(c)$, then one has $ab\in I$ or $ac\in I$.
\end{enumerate}
\end{defn}

\begin{thm}
\begin{enumerate}
\item If $A$ is a self-injective connected algebra with a component of the stable Auslander-Reiten quiver isomorphic to $\mathbb{Z}[\tilde{\mathbb{A}}_{12}]$, then the algebra is Morita equivalent to one of the following basic algebras with quiver
\[\begin{xy}
\xymatrix{
&2\ar@<2pt>[r]^{x^1_2}\ar@<-2pt>[r]_{x_2^0}&\dots\ar@<2pt>[r]^{x^1_{j-2}}\ar@<-2pt>[r]_{x^0_{j-2}}&{j-1}\ar@<2pt>[rd]^{x^1_{j-1}}\ar@<-2pt>[rd]_{x^0_{j-1}}\\
1\ar@<2pt>[ru]^{x^1_1}\ar@<-2pt>[ru]_{x^0_1}&&&& j\ar@<2pt>[ld]^{x^1_j}\ar@<-2pt>[ld]_{x^0_j}\\
&n\ar@<2pt>[lu]^{x^1_{n}}\ar@<-2pt>[lu]_{x^0_{n}}&\dots\ar@<2pt>[l]^{x^1_{n-1}}\ar@<-2pt>[l]_{x^0_{n-1}}&{j+1}\ar@<2pt>[l]^{x^1_{j+1}}\ar@<-2pt>[l]_{x^0_{j+1}}
}
\end{xy}\]
    and relations for each $j$ given by either $x_{j+1}^ix_j^i-\lambda_j^ix_{j+1}^{i+1}x_{j}^{i+1}$ and $x_{j+1}^ix_j^{i+1}$ for any $i$ or by $x_{j+1}^{i+1}x_j^i-\lambda_j^ix_{j+1}^ix_j^{i+1}$ and $x_{j+1}^ix_j^i$ for any $i$, where $\lambda_j^i\in k^\times$ for all $i,j$. (Many of these algebras are of course isomorphic.) Here the indices are added modulo $2$ and modulo $n$ respectively.
    In particular, all these algebras are special biserial. The Auslander-Reiten quiver consists of $n$ components isomorphic to $\mathbb{Z}[\tilde{\mathbb{A}}_{12}]$ each containing exactly one simple module and $n$ $\mathbb{P}^1(k)$-families of homogeneous tubes.
\item If $A$ is weakly-symmetric, then there are at most two simple modules. If there are two of them the following basic algebras remain:
\[\begin{xy}
\xymatrix{
1\ar@<2.5pt>[r]|{y_1}\ar@<7.5pt>[r]|{x_1}&2\ar@<2.5pt>[l]|{x_2}\ar@<7.5pt>[l]|{y_2}
}
\end{xy}\]
with relations: $x_2y_1=y_2x_1=0$ and $x_2x_1-y_2y_1=0$ and one of the following possibilities:
\begin{enumerate}[a)]
\item $x_1y_2=y_1x_2=0$ and $x_1x_2-\lambda y_1y_2=0$, where $\lambda\in k^\times$, say $A_2(\lambda)$;
\item $x_1x_2=y_1y_2=0$ and $x_1y_2-y_1x_2=0$, say $A_2(0)$.
\end{enumerate}
We have that $A_2(\mu)\cong A_2(\mu')$ iff $\mu'=\mu^{\pm 1}$.\\
If there is only one simple module then the following basic algebras remain:
\[\begin{xy}
\xymatrix{
1\ar@(ur,dr)[]^y\ar@(ul,dl)[]_x
}
\end{xy}\]
with relations $xy=yx=0$ and $x^2=y^2$, say $A_1(0)$, or $x^2=y^2=0$ and $xy=\lambda yx$, where $\lambda\in k^\times$, say $A_1(\lambda)$. For $\charac k=2$ we have $A_1(0)\ncong A_1(1)$. For $\charac k\neq 2$ we have $A_1(\mu)\cong A_1(\mu')$ iff $\mu'=\mu^{\pm 1}$ or $\{\mu,\mu'\}=\{0,1\}$.
\item The only symmetric algebras among these are $A_2(1)$ and $A_1(0)$ and $A_1(1)$.
\end{enumerate}
\end{thm}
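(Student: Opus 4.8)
\emph{Part (1).} The plan is to build everything on the preceding Proposition, which fixes the local shape of a $\mathbb{Z}[\tilde{\mathbb{A}}_{12}]$-component near its attached projective, and then to bootstrap along the quiver. By the Proposition there is a projective-injective module $P$ of Loewy length three; after relabelling vertices write $\Kopf P = S_0$, $\heart P = S_1 \oplus S_1$, $\soc P = S_2$, where $0,1,2$ need not yet be distinct. Reading off arrows: at vertex $0$ there start exactly two arrows, both ending at $1$, and dually — as $P$ is the injective envelope of $S_2$ — at vertex $2$ there end exactly two arrows, both starting at $1$. A short mesh computation inside $\mathbb{Z}[\tilde{\mathbb{A}}_{12}]$ (the module dimensions growing as $1,3,3,5,5,\dots$ away from $P$) shows that $\Theta$ contains a unique simple module, namely the heart simple $S_1$, and that $\rad P$ and $P/\soc P$ are the two length-three modules neighbouring $P$. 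One then propagates this structure: since $\Omega$ is a stable self-equivalence, $\Omega\Theta$ is again of the form $\mathbb{Z}[\tilde{\mathbb{A}}_{12}]$, hence by the Proposition again attached to a length-four projective-injective whose radical square is simple, and comparing $\Omega S_1 = \rad P_1$ with that structure forces $\rad P_1/\rad^2 P_1 \cong S_2 \oplus S_2$, i.e. vertex $1$ emits exactly two arrows, both to $2$. Iterating — while carrying along the Nakayama permutation $\nu$ (so that $\soc P_i \cong S_{\nu(i)}$ and $\Omega^2 = \nu^{-1}\tau$; the non-symmetric twist is exactly what prevents the process from collapsing too early) and using that there are only finitely many vertices and the algebra is connected — one obtains that the quiver is a single cycle $1 \to 2 \to \dots \to n \to 1$ with a double arrow between consecutive vertices and $\nu(i) = i+2$. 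Making this induction precise is, I expect, the main obstacle; the rest is comparatively mechanical. The relations then come from the radical square of each $P_j$: the multiplication map $\Ext^1_A(S_j,S_{j+1}) \times \Ext^1_A(S_{j+1},S_{j+2}) \to \rad^2 P_j/\rad^3 P_j \cong k$ is a bilinear form, which one checks has rank two, and its normal form — together with the scalar freedom $\lambda_j^i \in k^\times$ that remains after gluing around the cycle — yields exactly the relations displayed in the statement. Finally (SB1)--(SB2') are immediate from the presentation, and the description of the stable Auslander--Reiten quiver ($n$ copies of $\mathbb{Z}[\tilde{\mathbb{A}}_{12}]$, each with one simple, together with $n$ $\mathbb{P}^1(k)$-families of homogeneous tubes) follows from the string-and-band classification of indecomposables over a special biserial algebra.

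\emph{Part (2).} Weak symmetry means $\nu = \id$, so by Part (1) $n \mid 2$ and hence $n \in \{1,2\}$. Writing out the two relation families on $1 \rightrightarrows 2 \rightrightarrows 1$ (resp. on the one-vertex quiver with two loops) and rescaling arrows to absorb scalars, one is left with the algebras $A_2(\lambda)$ ($\lambda \in k^\times$), $A_2(0)$, $A_1(\lambda)$ ($\lambda \in k^\times$) and $A_1(0)$ as listed. Swapping the two arrows gives $A_?(\mu) \cong A_?(\mu^{-1})$, and that there are no further identifications — together with the $\charac k = 2$ exception for $A_1$ — is checked by determining for which scalars $\mu$ an algebra automorphism can send one arrow to $\mu$ times the other.

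\emph{Part (3).} It remains to single out the symmetric algebras among those of Part (2). A basic self-injective algebra is symmetric precisely when it admits a linear form $\phi$ with $\phi(ab) = \phi(ba)$ that is nonzero on each (one-dimensional) socle $\soc P_i$; imposing $\phi(ab) = \phi(ba)$ on the length-two paths through each vertex turns this into a finite linear system. For $A_2(\lambda)$, with $\phi$ supported on the socles $k\,x_2x_1 = k\,y_2y_1$ and $k\,x_1x_2 = k\,\lambda y_1y_2$, the equations $\phi(x_2x_1) = \phi(x_1x_2)$ and $\phi(y_2y_1) = \phi(y_1y_2)$ force $\lambda = 1$, and $A_2(1)$ is indeed symmetric; for $A_2(0)$ the relation $x_1x_2 = 0$ with $x_2x_1$ spanning a socle forces $\phi$ to vanish there, so $A_2(0)$ is not symmetric. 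Likewise, $\phi(xy) = \phi(yx)$ together with $xy = \lambda yx$ forces $\lambda = 1$ for $A_1(\lambda)$, while $A_1(0)$ (relations $xy = yx = 0$, $x^2 = y^2$) plainly carries a symmetrizing form nonzero on $x^2 = y^2$ and zero on $e, x, y$. Hence the symmetric algebras are exactly $A_2(1)$, $A_1(1)$ and $A_1(0)$, as claimed.
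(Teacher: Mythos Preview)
Your outline for Part~(1) matches the paper's proof closely: start from the Proposition's length-four projective with heart $S\oplus S$, propagate via $\Omega$ to see that each $\Omega^i\Theta$ is again a Kronecker component attached to the next projective in a chain, use finiteness of the simples and connectedness of $A$ to close up into a cycle, then read off the quiver and relations from $\rad P_i/\rad^2 P_i$ and $\rad^2 P_i$. Your rank-two bilinear-form description of the relations is a tidy way to organize the normal-form step that the paper leaves as ``lifting this base to $P_i$ the relations can easily be determined''. Both arguments then invoke the string/band theory of special biserial algebras (the paper cites \cite{WW85}) for the shape of the full Auslander--Reiten quiver.

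The one substantive divergence is in Part~(2), in proving that the listed algebras are pairwise non-isomorphic. You propose to analyse all possible isomorphisms directly (arrows map to linear combinations of arrows, then constrain by the relations). That works in principle, but it is laborious and you do not carry it out; note also that an isomorphism need not send an arrow to a scalar multiple of a single arrow, so the phrase ``send one arrow to $\mu$ times the other'' undersells what has to be checked. The paper takes a cleaner route: it computes the Nakayama automorphism of each algebra (using \cite[Proposition~3.1]{HZ08} to produce a Frobenius form) and records its trace, obtaining $\Tr(\nu)=6+\lambda+\lambda^{-1}$ for $A_2(\lambda)$, $\Tr(\nu)=4$ for $A_2(0)$, and $\Tr(\nu)=2+\lambda+\lambda^{-1}$ for $A_1(\lambda)$. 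Since this trace is an isomorphism invariant, the separations $\mu'=\mu^{\pm 1}$ fall out immediately. This is both shorter and more robust than chasing base changes, and is worth knowing as a technique.

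Part~(3) is handled the same way in both; your explicit symmetrizing-form computation is exactly what the paper means by ``explicit calculation shows which of the algebras are symmetric''.
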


\begin{proof}
Let $P_1$ be the projective indecomposable module attached to $\Theta$. Set $S_1:=\Kopf P_1$, $S_3:=\soc P_1$ and let $S_2$ be the simple module, such that $\heart P_1\cong S_2\oplus S_2$. Now define $P_1,\dots P_n$ by induction since $\Omega^i \Theta$ is also a component of Kronecker type attached to the projective cover of $S_{i+1}$, that is $P_{i+1}$. By general theory the Auslander-Reiten sequence attached to $P_i$ is $0\to \rad P_i\to \heart P_i\oplus P_i\to P_i/\soc P_i\to 0$ and $P_i$ was defined in such a way that $\heart P_i=S_{i+1}\oplus S_{i+1}$. Since there are only finitely many simple module, there is $l,m$ such that $S_l\cong S_{m+1}$. Choose them such that $|l-m|$ is minimal. Without loss of generality $l=1$. Then the modules $S_1,\dots,S_m$ form a complete list of composition factors of all the projective modules attached to the $\Omega^j\Theta$ for all $j$. Therefore they form a block. As the algebra is connected, these are all the simple modules . The quiver of $A$ can now be constructed by choosing a basis of $\rad(P_i)/\rad^2(P_i)$ for all $i$. Lifting this base to $P_i$ the relations can easily be determined. This implies that the algebra is special biserial. Thus by \cite{WW85} the Auslander-Reiten quiver of $A$ consists of $n$ components isomorphic to $\mathbb{Z}[\tilde{\mathbb{A}}_{12}]$ and $n$ $\mathbb{P}^1(k)$-families of homogeneous tubes.\\
If the algebra is weakly symmetric one has $S_1\cong S_3$. Direct construction of isomorphisms yields the stated quivers and relations. These isomorphisms can be given by sending the arrows to scalar multiples of themselves (Here we use that the field is algebraically closed.) To show that they are indeed non-isomorphic in the stated cases we use the fact that the trace of the Nakayama automorphism provides an invariant for the isomorphism class of an algebra. To construct a Frobenius homomorphism one can e.g. use \cite[Proposition 3.1]{HZ08}. For $A_2(\lambda)$ the trace of the Nakayama automorphism is $6+\lambda+\lambda^{-1}$, for $A_2(0)$ this invariant is $4$, the result for $A_1(\lambda)$ is $2+\lambda+\lambda^{-1}$.\\
Explicit calculation shows which of the algebras are symmetric.
\end{proof}

\begin{thm}
Let $\mathfrak{g}$ be a finite dimensional complex simple Lie algebra. Let $\ell>1$ be an odd integer not divisible by $3$ if $\Phi$ is of type $\mathbb{G}_2$. Furthermore for $\charac k=0$ assume that $\ell$ is good for $\Phi$ and that $\ell>3$ if $\Phi$ is of type $\mathbb{B}_n$ or $\mathbb{C}_n$. For $\charac k=p>0$ assume that $p$ is good for $\Phi$ and that $\ell>h$. If $U_\zeta(G_r)$ satisfies (fg), then the algebra $U_\zeta(G_r)$ has Kronecker components only if $\mathfrak{g}\cong \mathfrak{sl}_2$ and $r=0$ or $r=1$. For $r=1$ they belong to the image of the block embedding $F:\modu \Dist(\SL(2)_1)\to \modu U_\zeta(SL(2)_1), V\mapsto V\otimes \St_\ell$ (see \cite{K11}).
\end{thm}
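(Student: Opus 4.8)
The plan is to read the statement off from the classification theorem just proved together with the main result of \cite{K11} recalled in Section~2, so that no further support-variety machinery is needed. Suppose $U_\zeta(G_r)$ has a component $\Theta\cong\mathbb{Z}[\tilde{\mathbb{A}}_{12}]$ in its stable Auslander--Reiten quiver. Since irreducible maps preserve blocks, $\Theta$ lies entirely in a single block $B$ of $U_\zeta(G_r)$. Being a finite-dimensional Hopf algebra, $U_\zeta(G_r)$ is a Frobenius algebra, hence self-injective; a block of a self-injective algebra is again self-injective, and $B$ is connected because it is cut out by a primitive central idempotent. Thus $B$ is a connected self-injective algebra possessing a component isomorphic to $\mathbb{Z}[\tilde{\mathbb{A}}_{12}]$, so the preceding theorem applies: $B$ is Morita equivalent to a special biserial algebra, and in particular $B$ is tame (cf.\ \cite{WW85}).

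Next I would observe that $B$ is not semisimple: a semisimple block has empty stable Auslander--Reiten quiver, whereas $\Theta$ is attached, by the structural result recalled above, to a projective indecomposable of length four. Hence $B$ is a tame, non-semisimple block of $U_\zeta(G_r)$. Now I invoke \cite{K11}: part~(i) of its main theorem says the only representation-finite block of $U_\zeta(G_r)$ is the semisimple block of $\St_{p^r\ell}$, and part~(ii) says that, under exactly the hypotheses on $\ell$ (for $\charac k=0$) resp.\ on $p$ and $\ell$ (for $\charac k=p>0$) imposed in the present statement, tameness of a non-semisimple block can only occur when $G=\SL_2$ and either $r=0$, or $r=1$ with the block lying in the image of the block embedding $F\colon\modu\Dist(\SL(2)_1)\to\modu U_\zeta(\SL(2)_1)$, $V\mapsto V\otimes\St_\ell$. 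Applying this to $B$ gives $\mathfrak{g}\cong\mathfrak{sl}_2$ and $r\in\{0,1\}$, and for $r=1$ the block $B$ --- and hence the component $\Theta\subseteq B$ --- lies in the image of $F$, which is precisely the assertion.

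The step requiring the most care is the honest bookkeeping around \cite{K11}: one must check that the results there are genuinely statements about individual blocks --- which they are, since the stable Auslander--Reiten quiver of $U_\zeta(G_r)$ decomposes as the disjoint union of those of its blocks, and for $G\neq\SL_2$ or $r\geq 2$ every non-semisimple block is of wild representation type --- and that the hypotheses on $\ell$ and $p$ quoted in the present theorem coincide with those needed to apply \cite{K11}. Everything else is formal; in characteristic $0$ only $r=0$ occurs in any case, and the existence of Kronecker components in the surviving cases is not part of the claim and need not be addressed here.
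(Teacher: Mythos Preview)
Your proof is correct and follows essentially the same route as the paper: a Kronecker component forces its block to be tame (via the special biserial classification just established), and then \cite{K11} pins down the tame blocks as exactly the cases listed. The paper's proof is terser but identical in substance; your extra care about blocks being connected self-injective and non-semisimple, and about \cite{K11} being a blockwise statement, simply makes explicit what the paper leaves implicit.
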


\begin{proof}
As a component of Kronecker type belongs to a tame block, $U_\zeta(G_r)$ has to have a tame block. This is only possible in the stated cases by \cite[Proposition 5.6]{K11}. The known representation theory of $U_\zeta(\SL(2)_0)$ and $\Dist(\SL(2)_1)$ yields the result.
\end{proof}

That there is a classification in the Kronecker case is quite special. However one can prove partial results also for the case $\tilde{\mathbb{A}}_2$:

\begin{prop}
Let $A$ be a self-injective algebra with a component $\Theta$ of the stable Auslander-Reiten quiver isomorphic to $\mathbb{Z}[\tilde{\mathbb{A}}_2]$. Then there is a unique projective module $P_\Theta$ attached to the component that satisfies $\heart P_\Theta=S_\Theta\oplus M_\Theta$ with  an irreducible map $M_\Theta\twoheadrightarrow S_\Theta$ or $S_\Theta\hookrightarrow M_\Theta$. Furthermore $\Omega\Theta\neq \Theta$ and if $M_\Theta\twoheadrightarrow S_\Theta$ is irreducible, then there is an irreducible map $S_{\Omega\Theta}\hookrightarrow M_{\Omega\Theta}$, and if $S_\Theta\hookrightarrow M_\Theta$is irreducible, then there is an irreducible map $M_{\Omega\Theta}\twoheadrightarrow S_{\Omega\Theta}$.
\end{prop}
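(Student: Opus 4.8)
The plan is to extract the local structure at a projective indecomposable from the shape of the AR-sequences in a $\mathbb{Z}[\tilde{\mathbb{A}}_2]$ component, just as was done for the Kronecker case in the preceding proof. First I would recall that in a non-stable tube of tree class $\tilde{\mathbb{A}}_2$ there is exactly one projective-injective vertex $P_\Theta$ that "closes up" the component, and that the AR-sequence terminating in $P_\Theta/\soc P_\Theta$ and starting at $\rad P_\Theta$ reads $0\to \rad P_\Theta\to \heart P_\Theta\oplus P_\Theta\to P_\Theta/\soc P_\Theta\to 0$. The key combinatorial input is that $\tilde{\mathbb{A}}_2$, with the forced non-cyclic orientation, has at each vertex one arrow in and one arrow out except at the two "branch" vertices; translated to the AR-quiver this means the middle term of each AR-sequence in the stable part has exactly two indecomposable summands, but the one sequence adjacent to $P_\Theta$ has $P_\Theta$ as one summand and hence only one further indecomposable summand, so $\heart P_\Theta$ has exactly two indecomposable summands, say $\heart P_\Theta\cong S_\Theta\oplus M_\Theta$. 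I would then argue $S_\Theta$ must be simple: the vertex $\rad^2 P_\Theta$ sits at the "mouth" of the tube, so one of the two arrows out of $\rad P_\Theta$ in the stable component goes to a module of length $\operatorname{length}\rad P_\Theta-1$, forcing one summand of $\heart P_\Theta$ to be simple. The remaining summand $M_\Theta$ then receives (or maps from) that simple via an irreducible map coming from the inclusion $\soc\hookrightarrow$ or projection $\twoheadrightarrow\operatorname{top}$, depending on whether $S_\Theta\cong\soc P_\Theta$ or $S_\Theta\cong\operatorname{top}P_\Theta$; this gives the dichotomy $M_\Theta\twoheadrightarrow S_\Theta$ irreducible versus $S_\Theta\hookrightarrow M_\Theta$ irreducible.

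Next I would establish $\Omega\Theta\neq\Theta$. Since $A$ is self-injective, $\Omega$ permutes the stable AR-components, and $\Omega\Theta$ is again a $\mathbb{Z}[\tilde{\mathbb{A}}_2]$ component with its own unique attached projective $P_{\Omega\Theta}$. If $\Omega\Theta=\Theta$ then $P_{\Omega\Theta}=P_\Theta$, so $\operatorname{top}P_\Theta$ and $\soc P_\Theta$ would coincide (the projective attached to a component is the projective cover of the top of the modules whose syzygy lies in the component, and simultaneously the injective hull of the socle); but for $\tilde{\mathbb{A}}_2$ the heart $S_\Theta\oplus M_\Theta$ with $M_\Theta$ indecomposable non-simple is not self-dual in the way that $\Omega$-periodicity of the component would demand — more precisely, $\Omega$ interchanges the two "ends" of the attaching data, sending the simple summand on the top side to the simple summand on the socle side, and these are genuinely different vertices of $\tilde{\mathbb{A}}_2$, so $\Theta$ and $\Omega\Theta$ are attached to their projectives "from opposite ends" and cannot be equal. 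I would make this precise by noting that $\Omega$ takes the AR-sequence $0\to\rad P_\Theta\to\heart P_\Theta\oplus P_\Theta\to P_\Theta/\soc P_\Theta\to0$ to an exact sequence whose stable part is the AR-sequence at $P_{\Omega\Theta}$, and tracking lengths/positions shows the branch vertex adjacent to $P_{\Omega\Theta}$ on which the irreducible map to/from the simple sits is the opposite one.

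This last tracking is exactly the statement about the arrow directions: applying $\Omega$ to an irreducible epimorphism $M_\Theta\twoheadrightarrow S_\Theta$ yields an irreducible monomorphism $\Omega M_\Theta\hookrightarrow \Omega S_\Theta$ up to adjusting by projective summands (since $\Omega$ is exact on the stable category and sends surjections to injections between the syzygies), and one identifies $\Omega S_\Theta$ with $M_{\Omega\Theta}$ and the relevant simple with $S_{\Omega\Theta}$ by matching the branch-vertex positions in $\Theta$ and $\Omega\Theta$; symmetrically for a monomorphism $S_\Theta\hookrightarrow M_\Theta$. I expect the main obstacle to be bookkeeping: correctly matching up which vertex of the abstract $\tilde{\mathbb{A}}_2$ graph corresponds to the simple summand of $\heart P_\Theta$ and showing that $\Omega$ really does swap the "top" branch vertex with the "socle" branch vertex rather than fixing the attachment data, i.e. ruling out a self-duality that would allow $\Omega\Theta=\Theta$. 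This is where one has to use that $\tilde{\mathbb{A}}_2$ (with non-cyclic orientation) has its two branch vertices genuinely distinguishable by the orientation, together with the fact that $P_\Theta$, being projective-injective, forces $\operatorname{top}$ and $\soc$ to occupy precisely those two distinguished positions.
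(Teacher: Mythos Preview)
Your proposal has the right overall architecture but contains a genuine gap at the crucial step of locating the simple module, and the argument for $\Omega\Theta\neq\Theta$ is too vague to work as stated.

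\textbf{The simple summand of $\heart P_\Theta$.} You write that ``the vertex $\rad^2 P_\Theta$ sits at the mouth of the tube'' and that one arrow out of $\rad P_\Theta$ goes to a module of length $\ell(\rad P_\Theta)-1$. But $\mathbb{Z}[\tilde{\mathbb{A}}_2]$ is not a tube and has no mouth, and there is no a priori length computation available here (unlike the Kronecker case, where lengths were pinned down by a separate cited result). The paper's argument is entirely different: starting from the standard almost split sequence $0\to\rad P\to\heart P\oplus P\to P/\soc P\to 0$, one observes that the irreducible maps leaving $\rad P$ (other than $\rad P\to P$) are surjective and those entering $P/\soc P$ are injective, and then propagates this surjective/injective dichotomy through the meshes of $\mathbb{Z}[\tilde{\mathbb{A}}_2]$ by induction. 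Separately, since $\Omega^{-1}\Theta$ is again of type $\mathbb{Z}[\tilde{\mathbb{A}}_2]$ and hence attached to some projective $P'$, the simple module $\soc P'=\Omega(P'/\soc P')$ lies in $\Theta$. Now a simple module cannot sit at a vertex where an injective irreducible map terminates or a surjective one originates; the only vertices of $\Theta$ not excluded by the propagated map directions are the two vertices carrying the summands of $\heart P$. This is how one gets $\heart P=S\oplus M$ with $S$ simple, and the dichotomy $M\twoheadrightarrow S$ versus $S\hookrightarrow M$ is read off from which of the two $\heart P$ vertices $S$ occupies.

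\textbf{Showing $\Omega\Theta\neq\Theta$.} Your argument (``$\Omega$ swaps the two branch vertices, which are distinguishable by orientation'') does not rule anything out: an abstract symmetry of the underlying quiver does not prevent $\Omega\Theta=\Theta$. The paper argues concretely: if $\Omega\Theta=\Theta$ then $\soc P=\Omega(P/\soc P)$ is the unique simple $S$ in $\Theta$, and $\Omega S,\Omega M$ are the predecessors of $S$. Matching these against the known predecessors of $S$ in $\Theta$ forces either $\Omega M\cong M$, contradicting $\cx\Theta=2$ (the Kerner--Zacharia result that Euclidean components have complexity two, so no module in them is $\Omega$-periodic), or $\Omega M\cong\rad P$, contradicting dimensions. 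The complexity input is essential and absent from your sketch.

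For the final direction-swap, your heuristic ``$\Omega$ turns epimorphisms into monomorphisms'' points in the right direction, but your identifications $\Omega S_\Theta\cong M_{\Omega\Theta}$ etc.\ are not justified. The paper instead determines which of $\Omega S,\Omega M$ equals $\rad Q$ (for $Q=P_{\Omega\Theta}$) by ruling out the alternative via the contradiction that it would attach $P(S)$ to $\Omega\Theta$, and then reads off $T=S_{\Omega\Theta}\hookrightarrow N=M_{\Omega\Theta}$ from the resulting mesh.
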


\begin{proof}
A component of type $\mathbb{Z}[\tilde{\mathbb{A}}_2]$ looks as follows (since it is necessarily attached to a projective module by \cite[p. 155]{BR87}, we have drawn that too):
\[
\begin{xy}
\xymatrix{
&\circ\ar[rd]&&\circ\ar[rd]&&\bullet\ar[rd]&&\circ\ar[rd]&&\circ\ar[rd]\\
\dots\ar[ru]\ar[rd]&&\circ\ar[ru]\ar[rd]&&\circ\ar[ru]\ar[rd]\ar[r]&P\ar[r]&\circ\ar[ru]\ar[rd]&&\circ\ar[ru]\ar[rd]&&\dots\\
&\circ\ar[ru]\ar[rd]&&\circ\ar[ru]\ar[rd]&&\circ\ar[ru]\ar[rd]&&\circ\ar[ru]\ar[rd]&&\circ\ar[ru]\ar[rd]\\
\dots\ar[ru]&&\circ\ar[ru]&&\circ\ar[ru]&&\bullet\ar[ru]&&\circ\ar[ru]&&\dots
}
\end{xy}\]
Here the two bullets are identified (resp. their $\tau$-shifts). The standard almost split sequence attached to $P$ is $0\to \rad{P}\to \heart P\oplus P\to P/\soc P\to 0$. Therefore the irreducible maps originating in the predecessor of $P$ are surjective (except for the map $\rad P\to P$) while the irreducible maps terminating in the successor of $P$ (except for the map $P\to P/\soc P$) are injective. Since an irreducible map is either injective or surjective, an induction implies that the other meshes do not contain projective vertices and furthermore the maps "parallel" to these maps are injective (resp. surjective).\\
As $\Omega^{-1}\Theta$ satisfies the same properties, the component necessarily contains a simple module. Thus we have that this has to be at a vertex belonging to $\heart P$ since in all other vertices there either end injective irreducible maps or there start surjective irreducible maps. Depending on whether the simple module belongs to the bullet or to the other vertex we either have $M\twoheadrightarrow S$ or $S\hookrightarrow M$. Without loss of generality suppose the first possibility, otherwise dual arguments will yield the result.\\
Suppose that $\Omega\Theta=\Theta$. Then $\Omega S$ and $\Omega M$ are predecessors of $S\cong \Omega (P/\soc P)$ (since this is the only simple module in this component). Hence there are two possibilities. Either $\Omega M\cong M$, a contradiction since for a Euclidean component $\Delta$, we have that $\cx \mathbb{Z}[\Delta]=2$ by \cite[Proposition 1.1]{KZ11} or $\Omega M\cong \rad P$, a contradiction, since the two modules have different dimension.\\
Therefore $\soc P=: T\ncong S$. Since $\Omega \Theta$ is again a component isomorphic to $\mathbb{Z}[\tilde{\mathbb{A}}_2]$ we have that one of the two predecessors of $T$, $\Omega S$ or $\Omega M$ is isomorphic to $\rad{Q}$, where $Q$ is the projective module attached to $\Omega\Theta$. If $\Omega S\cong \rad{Q}$, then $Q$ is the projective cover of $S$. Thus we have the following part of $\Omega\Theta$:
\[\begin{xy}
\xymatrix{
\Omega S\ar[r]\ar[rd]&Q\ar[r]&Q/\rad{Q}\\
&T\ar[rd]\ar[ru]\\
\Omega M\ar[ru]\ar[rd]&&N\\
&\Omega S\ar[ru]
}
\end{xy}\]
where $N$ is the other direct summand of $\heart Q$. Thus there is an inclusion $T\hookrightarrow N$. Otherwise $\Omega S\cong N$, i.e. $N\cong \rad P(S)$, a contradiction to the fact that $P(S)$ cannot be attached to the component.
\end{proof}

\section{Restriction functors}

For $\alpha\in \Phi^+$ let $u_\zeta(f_\alpha)$ be the subalgebra of $U_\zeta(G_0)$ generated by $F_\alpha$. We start this section by proving that $U_\zeta(G_0):u_\zeta(f_\alpha)$ is a Frobenius extension. Our approach is similar to the approach by Farnsteiner and Strade for modular Lie algebras in \cite{FS91} and Bell and Farnsteiner for Lie superalgebras in \cite{BF93}. We use that the small quantum group is also a quotient of the De Concini-Kac form of the quantum group $\mathcal{U}_k(\mathfrak{g})$ (see e.g. \cite{DruPhD} for a definition).

\begin{defn}
Let $R$ be a ring, $S\subseteq R$ be a subring of $R$ and $\gamma$ be an automorphism of $S$. If $M$ is an $S$-module denote by $M^{(\gamma)}$ the $S$-module with the new action defined by $s*m=\gamma(s)m$. We say that $R$ is a \emphbf{free $\gamma$-Frobenius extension}\index{Frobenius extension} of $S$ if
\begin{enumerate}[(i)]
\item $R$ is a finitely generated free $S$-module, and
\item there exists an isomorphism $\varphi: R\to \Hom_S(R,S^{(\gamma)})$ of $(R,S)$-bimodules
\end{enumerate}
\end{defn}

For general theory on Frobenius extensions we refer the reader to \cite{NT60} and \cite{BF93}. We use the following definition:

\begin{defn}
Let $R$ be a ring, $S$ be a subring of $R$ and $\gamma $ be an automorphism of $S$.
\begin{enumerate}[(i)]
\item A \emphbf{$\gamma$-associative form} from $R$ to $S$ is a biadditive map $\langle -,-\rangle_\pi: R\times R\to S$, such that:
\begin{itemize}
\item $\langle sx,y\rangle_\pi=s\langle x,y\rangle_\pi$
\item $\langle x,ys\rangle_\pi=\langle x,y\rangle_\pi \gamma(s)$
\item $\langle xr,y\rangle_\pi=\langle x,ry\rangle_\pi$
\end{itemize}
for all $s\in S, r,x,y\in R$.
\item Let $\langle-,-\rangle_\pi:R\times R\to S$ be a $\gamma^{-1}$-associative form. Two subsets $\{x_1,\dots,x_n\}$ and $\{y_1,\dots,y_n\}$ of $R$ are said to form a \emphbf{dual free pair} relative to $\langle-,-\rangle_\pi$ if
\begin{itemize}
\item $R=\sum_{i=1}^{n}Sx_i=\sum_{i=1}^{n}y_iS$
\item $\langle x_i,y_j\rangle_\pi=\delta_{ij}$ for $1\leq i,j\leq n$.
\end{itemize}
\end{enumerate}
\end{defn}

\begin{lem}[{\cite[Corollary 1.2]{BF93}}]
Let $S$ be a subring of $R$ and let $\gamma$ be an automorphism of $S$. Then the following statements are equivalent:
\begin{enumerate}[(1)]
\item $R$ is a free $\gamma$-Frobenius extension of $S$.
\item There is a $\gamma^{-1}$-associative form $\langle-,-\rangle_\pi$ from $R$ to $S$ relative to which a dual free pair $\{x_1,\dots,x_n\}$, $\{y_1,\dots,y_n\}$ exists.
\end{enumerate}
More precisely if $\langle-,-\rangle_\pi$ is a $\gamma^{-1}$-associative form relative to which a dual free pair exists, then an isomorphism $R\to \Hom_S(R,S^{(\gamma)})$ is given by $y\mapsto (x\mapsto \gamma(\langle x,y\rangle_\pi))$ with inverse $f\mapsto  \sum_{i=1}^{n}y_if(x_i)$.
\end{lem}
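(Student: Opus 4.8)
The plan is to prove the two implications directly by unwinding the definitions; this is the standard bilinear-form description of a Frobenius extension, with the automorphism $\gamma$ carried along, and the argument is essentially a transcription of \cite[Corollary 1.2]{BF93}. Throughout I fix the $(R,S)$-bimodule structure on $\Hom_S(R,S^{(\gamma)})$ as follows: elements are the homomorphisms of left $S$-modules $R\to S^{(\gamma)}$, with $R$ a left $S$-module via $S\subseteq R$; the left $R$-action is $(r\cdot f)(x)=f(xr)$, using the right $R$-module structure of $R$, and the right $S$-action is $(f\cdot s)(x)=f(x)s$, using the untwisted right multiplication on $S^{(\gamma)}=S$. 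The only point requiring care is keeping the placement of $\gamma^{\pm1}$ consistent with these conventions; once that is fixed, every verification below is a one-line computation, so I do not expect a genuine obstacle.

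For $(2)\Rightarrow(1)$, which simultaneously yields the ``more precisely'' part, define $\varphi\colon R\to\Hom_S(R,S^{(\gamma)})$ by $\varphi(y)=\bigl(x\mapsto\gamma(\langle x,y\rangle_\pi)\bigr)$. First I would check that $\varphi(y)$ is a morphism of left $S$-modules into $S^{(\gamma)}$: from $\langle sx,y\rangle_\pi=s\langle x,y\rangle_\pi$ one gets $\varphi(y)(sx)=\gamma(s)\varphi(y)(x)=s\ast\varphi(y)(x)$. Next, $\varphi$ is left $R$-linear because $\langle xr,y\rangle_\pi=\langle x,ry\rangle_\pi$, and right $S$-linear because $\langle x,ys\rangle_\pi=\langle x,y\rangle_\pi\gamma^{-1}(s)$; the latter is exactly where $\gamma^{-1}$-associativity matches the twist built into $S^{(\gamma)}$. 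Bijectivity I would establish by showing that $\psi\colon f\mapsto\sum_{i=1}^n y_i f(x_i)$ is a two-sided inverse: writing an arbitrary $y\in R$ as $y=\sum_j y_j t_j$ and using $\langle x_i,y_j\rangle_\pi=\delta_{ij}$ gives $\gamma(\langle x_i,y\rangle_\pi)=t_i$, whence $\psi(\varphi(y))=y$; writing $x=\sum_j s_j x_j$ gives $\langle x,y_i\rangle_\pi=s_i$, whence for $f\in\Hom_S(R,S^{(\gamma)})$ one computes $\varphi(\psi(f))(x)=\sum_i\gamma(s_i)f(x_i)=\sum_i s_i\ast f(x_i)=f\bigl(\sum_i s_i x_i\bigr)=f(x)$, using that $f$ is a morphism into $S^{(\gamma)}$. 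Finally $R$ is free of rank $n$ over $S$: it is generated by the $x_i$ by hypothesis, and if $\sum_i s_i x_i=0$ then pairing with $y_j$ gives $s_j=\langle\sum_i s_i x_i,y_j\rangle_\pi=0$, so the $x_i$ form a free basis.

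For $(1)\Rightarrow(2)$, set $\langle x,y\rangle_\pi:=\gamma^{-1}(\varphi(y)(x))$. The three axioms of a $\gamma^{-1}$-associative form follow, respectively, from the left $S$-linearity of $\varphi(y)$ into $S^{(\gamma)}$, from the right $S$-linearity of $\varphi$, and from the left $R$-linearity of $\varphi$, each time with one $\gamma$ or $\gamma^{-1}$ cancelling or producing the required twist. To produce a dual free pair, choose a free basis $x_1,\dots,x_n$ of $R$ over $S$ (it exists by $(1)$) and let $f_1,\dots,f_n\in\Hom_S(R,S^{(\gamma)})$ be the coordinate functionals, determined by $f_i(x_j)=\delta_{ij}$; then $\{f_i\}$ generates $\Hom_S(R,S^{(\gamma)})$ as a right $S$-module, since $f=\sum_i f_i\cdot f(x_i)$. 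Put $y_i:=\varphi^{-1}(f_i)$. Then $\langle x_i,y_j\rangle_\pi=\gamma^{-1}(f_j(x_i))=\delta_{ij}$, we have $R=\sum_i Sx_i$ by choice of basis, and $R=\sum_i y_iS$ because the right $S$-module isomorphism $\varphi$ carries the right $S$-generating set $\{f_i\}$ to $\{y_i\}$. This completes both directions.
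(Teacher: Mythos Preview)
Your proof is correct. The paper does not supply its own proof of this lemma; it is simply quoted from \cite[Corollary~1.2]{BF93} and used as a black box in the subsequent Frobenius-extension arguments. What you have written is precisely the standard proof underlying that reference: read off the associative form from the bimodule isomorphism (and conversely), then use the free basis and its coordinate functionals to manufacture the dual free pair. Your bookkeeping of where $\gamma$ and $\gamma^{-1}$ appear is consistent with the paper's conventions for $S^{(\gamma)}$ and for the $(R,S)$-bimodule structure on $\Hom_S(R,S^{(\gamma)})$, so there is nothing to correct.
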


\begin{defn}
Let $\alpha\in \Phi^+$. Define $\mathcal{O}_k(\mathfrak{g},\alpha)$ to be the subalgebra of $\mathcal{U}_k(\mathfrak{g})$ generated by $F_\alpha$ and $E^\ell_{\beta}, F^\ell_{\beta'}, K^\ell_{\alpha'}$ with $\beta, \beta'\in \Phi^+$, $\alpha'\in \Pi$ and $\beta'\neq \alpha$.
\end{defn}

\begin{prop}\label{Frobeniusextension}
Let $\alpha\in \Pi$. Then $\mathcal{U}_k(\mathfrak{g})$ is a free $\gamma$-Frobenius extension of $\mathcal{O}_k(\mathfrak{g},\alpha)$ where $\gamma$ is given by $F_\alpha\mapsto \zeta^{-\sum_{\alpha'\in \Pi}(\alpha',\alpha)}F_\alpha$.
\end{prop}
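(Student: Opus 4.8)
The plan is to verify condition (2) of the Bell--Farnsteiner Lemma by exhibiting an explicit $\gamma^{-1}$-associative form $\langle-,-\rangle_\pi$ from $\mathcal{U}_k(\mathfrak{g})$ to $\mathcal{O}_k(\mathfrak{g},\alpha)$ together with a dual free pair; the Lemma then delivers the Frobenius extension structure together with the bimodule isomorphism. First I would fix a PBW-type basis of $\mathcal{U}_k(\mathfrak{g})$ over $\mathcal{O}_k(\mathfrak{g},\alpha)$. Using the standard PBW basis of the De Concini--Kac form in the root vectors $E_\beta$, $F_\beta$ and the $K_{\alpha'}$, one sees that $\mathcal{U}_k(\mathfrak{g})$ is free over $\mathcal{O}_k(\mathfrak{g},\alpha)$ with basis the monomials $\prod E_\beta^{a_\beta}\prod F_\beta^{b_\beta}\prod K_{\alpha'}^{c_{\alpha'}}$ in which every exponent attached to a generator \emph{not} already in $\mathcal{O}_k(\mathfrak{g},\alpha)$ runs over $0\le\cdot<\ell$ (and the $K$-exponents over $0\le\cdot<\ell$); this is exactly the "small quantum group part'' complementary to $\mathcal{O}_k$. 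Call this finite index set $\{x_1,\dots,x_n\}$.

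Next I would produce the associative form. The natural candidate is the composition of multiplication $\mathcal{U}_k(\mathfrak{g})\times\mathcal{U}_k(\mathfrak{g})\to\mathcal{U}_k(\mathfrak{g})$ with a projection $\pi$ onto the top PBW coefficient, i.e. the coefficient (in $\mathcal{O}_k(\mathfrak{g},\alpha)$) of the "longest'' basis monomial $x_{\mathrm{top}}=\prod_{\beta\in\Phi^+}E_\beta^{\ell-1}\prod_{\beta\neq\alpha}F_\beta^{\ell-1}F_\alpha^{?}\prod_{\alpha'}K_{\alpha'}^{\ell-1}$ (the analogue of the "integral'' in the modular Lie algebra case of Farnsteiner--Strade). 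The left $\mathcal{O}_k$-linearity and the middle relation $\langle xr,y\rangle_\pi=\langle x,ry\rangle_\pi$ are immediate from associativity of multiplication in $\mathcal{U}_k(\mathfrak{g})$. The twist enters in the right-hand rule: moving an element $s\in\mathcal{O}_k(\mathfrak{g},\alpha)$ from the right of $y$ past $x_{\mathrm{top}}$ produces the commutation scalar, and because $\mathcal{O}_k(\mathfrak{g},\alpha)$ is generated by $F_\alpha$ and $\ell$-th powers of root vectors and $K$'s, which are essentially central modulo lower PBW terms, the only surviving scalar is the one coming from commuting $F_\alpha$ past $\prod_{\alpha'}K_{\alpha'}^{\ell-1}$; a short computation with the relations $K_{\alpha'}F_\alpha=\zeta^{-(\alpha',\alpha)}F_\alpha K_{\alpha'}$ gives precisely the factor $\zeta^{-\sum_{\alpha'\in\Pi}(\alpha',\alpha)}$, which is $\gamma^{-1}$ as required (the inverse appearing because the Lemma wants a $\gamma^{-1}$-associative form). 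Then, after fixing the basis $\{x_i\}$ and its "dual'' PBW partners $\{y_i\}$ (the complementary monomials, scaled so the pairing is $\delta_{ij}$), I would check the dual-pair conditions $R=\sum Sx_i=\sum y_iS$ and $\langle x_i,y_j\rangle_\pi=\delta_{ij}$ directly from the PBW straightening rules.

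The main obstacle I expect is the bookkeeping of commutation scalars when straightening a product of two PBW monomials: the quantum Serre relations and the Levendorskii--Soibelman-type commutators produce many lower-order correction terms, and one must argue that none of these corrections contribute to the top coefficient $\pi$ while the leading term contributes exactly $1$ (after the right scaling) and carries exactly the claimed twist. This is the quantum analogue of the computation in \cite{FS91}, and the key simplification, as there, is that one only needs the \emph{highest} PBW component, so all the lower terms in every commutator can be discarded; the twist is then determined purely by the abelian part, i.e. by how $F_\alpha$ interacts with the torus elements $K_{\alpha'}^{\ell}$ (trivial, since $\ell$ is odd and $\zeta^\ell=1$) and with the $K_{\alpha'}$ occurring in $x_{\mathrm{top}}$ (nontrivial, giving $\gamma$). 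A secondary point to be careful about is the restriction $\alpha\in\Pi$ in the statement versus $\alpha\in\Phi^+$ in the definition of $\mathcal{O}_k(\mathfrak{g},\alpha)$: for simple $\alpha$ the root vector $F_\alpha$ is a genuine generator and the commutation relations with the $K_{\alpha'}$ are the clean ones written above, which is why the twist takes the stated closed form; I would remark that this is all that is needed downstream, since the restriction functors of interest are along the rank-one subalgebras $u_\zeta(f_\alpha)$ for simple $\alpha$.
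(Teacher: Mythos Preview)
Your plan is essentially the paper's proof: use the Bell--Farnsteiner criterion, take the PBW basis over $\mathcal{O}_k(\mathfrak{g},\alpha)$, define $\langle x,y\rangle_\pi=\pi(xy)$ where $\pi$ projects to the coefficient of the top monomial, and check the dual free pair by PBW straightening in the associated graded. The paper carries this out by passing to $\gr\,\mathcal{U}_k(\mathfrak{g})$ for a convex ordering in which $\alpha$ is minimal, exactly the ``discard lower terms'' manoeuvre you anticipate.

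Two points where your sketch is imprecise and would need correction in the write-up. First, the question mark in $F_\alpha^{?}$: since $F_\alpha\in\mathcal{O}_k(\mathfrak{g},\alpha)$, the free $\mathcal{O}_k$-basis consists of monomials with $F_\alpha$-exponent $0$, and the top monomial is $\prod_{\beta\neq\alpha}F_\beta^{\ell-1}\prod_{\alpha'}K_{\alpha'}^{\ell-1}\prod_\beta E_\beta^{\ell-1}$. Second, and more substantively, your assertion that ``the only surviving scalar comes from commuting $F_\alpha$ past $\prod K_{\alpha'}^{\ell-1}$'' is not correct: in the associated graded, moving $F_\alpha$ past the block $\prod_{\beta\neq\alpha}F_\beta^{\ell-1}$ also produces a nontrivial power of $\zeta$, namely $\zeta^{(\ell-1)\sum_{\beta\in\Phi^+,\beta\neq\alpha}(\beta,\alpha)}$. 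The paper's computation keeps this term and then simplifies using $\zeta^{\ell}=1$ and the identity $(2\rho,\alpha)=(\alpha,\alpha)$ for simple $\alpha$ to obtain $\zeta^{\sum_{\alpha'\in\Pi}(\alpha',\alpha)}$ for the $\gamma^{-1}$-twist (your sign is also off by an inverse). So the closed form of $\gamma$ really uses that $\alpha$ is simple, not just through the clean $K$-relations but through this root-system identity; your ``secondary point'' paragraph should be upgraded accordingly.
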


\begin{proof}
We use the equivalent description of Frobenius extension in \cite[Corollary 1.2]{BF93}. The freeness follows from the Poincaré-Birkhoff-Witt theorem for quantum groups. We now construct a $\gamma$-associative form: Let $\mathcal{L}^n\in \mathbb{N}^n$ be the vector with $\mathcal{L}^n_i:=\ell-1$, $\mathcal{L}^N\in \mathbb{N}^N$ be the vector with $\mathcal{L}^N_i:=\ell-1$ and $\mathcal{L}^{N-1}\in \mathbb{N}^N$ be the vector with $\mathcal{L}^{N-1}_i:=\delta_{i,\alpha}(\ell-1)$. Let $\mathcal{O}'$ be the complement of $\mathcal{O}_k(\mathfrak{g},\alpha)F^{\mathcal{L}^{N-1}}K^{\mathcal{L}^n}E^{\mathcal{L}^{N}}$ with respect to the PBW basis, i.e. $\mathcal{U}_k(\mathfrak{g})=\mathcal{O}'\oplus \mathcal{O}_k(\mathfrak{g},\alpha)F^{\mathcal{L}^{N-1}}K^{\mathcal{L}^n}E^{\mathcal{L}^{N}}$. Let $\pi:\mathcal{U}_k(\mathfrak{g})\to \mathcal{O}_k(\mathfrak{g},\alpha)$ be the projection onto the second summand.
Define $\langle x,y\rangle_\pi:=\pi(xy)$. Then we obviously have $\pi(xy)=x\pi(y)$ for all $x\in \mathcal{O}_k(\mathfrak{g},\alpha), y\in \mathcal{U}_k(\mathfrak{g})$. Furthermore we claim that $\pi(yF_\alpha)=\zeta^{\sum_{\alpha'\in\Pi}(\alpha',\alpha)}\pi(y)F_\alpha$ for all $y\in \mathcal{U}_k(\mathfrak{g})$: It suffices to show this on the basis vectors $F^{a}K^{b}E^{c}$ for $a\in \{x\in \mathbb{N}^N|x_\alpha=0\}$, $b\in \mathbb{N}^n$ and $c\in \mathbb{N}^N$ and for the associated graded algebra $\gr \mathcal{U}_k(\mathfrak{g})$, where the reduced expression used to construct the convex ordering is chosen in such a way that $\alpha$ is a minimal root. This algebra is given by generators and relations in {\cite[Proposition 1.7]{DK90}}. It follows that $F^{a}K^{b}E^{c}F_\alpha=\zeta^{-(\sum b_i,\alpha)+(\sum a_i,\alpha)}F_\alpha F^{a}K^{b}E^{c}$ in $\gr\mathcal{U}_k(\mathfrak{g})$. So low-order terms will not have an influence on $\image(\pi)$. And
\[F^{\mathcal{L}^{N-1}}K^{\mathcal{L}^n}E^{\mathcal{L}^{N}}F_\alpha=\zeta^{(\ell-1)(\sum_{\alpha\neq \alpha'\in \Phi^+}(\alpha',\alpha)-\sum_{\alpha'\in \Pi}(\alpha',\alpha))}F_\alpha F^{\mathcal{L}^{N-1}}K^{\mathcal{L}^n}E^{\mathcal{L}^{N}}.\]
The coefficient is equal to
\[\zeta^{-(2\rho,\alpha)+(\alpha,\alpha)+\sum_{\alpha'\in \Pi}(\alpha',\alpha)}=\zeta^{\sum_{\alpha'\in \Pi}(\alpha',\alpha)}.\]
The dual free pair will be the PBW basis $\{F^{a}K^{b}E^{c}\}$ and the set $\{\zeta^{-c(a,b,c)}F^{\ell-1-a}K^{\ell-1-b}E^{\ell-1-c}\}$, where $c(a,b,c)$ is chosen in such a way that \[\pi(F^{\ell-1-a'}K^{\ell-1-b'}E^{\ell-1-c'}F^{a}K^{b}E^{c})=\zeta^{c(a,b,c)}\delta_{a,a'}\delta_{b,b'}\delta_{c,c'},\] where $c(a,b,c)$ is some integer depending only on $a$, $b$ and $c$. This is possible by a similar computation as before.
\end{proof}

\begin{prop}\label{frobeniusextension}
Let $\alpha\in \Pi$. Then $U_\zeta(G_0)$ is a free $\gamma$-Frobenius extension of $u_\zeta(f_\alpha)$ where $\gamma$ is given by $F_\alpha\mapsto \zeta^{-\sum_{\alpha'\in \Pi}(\alpha',\alpha)}F_\alpha$ for each root subalgebra.
\end{prop}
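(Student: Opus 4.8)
The plan is to deduce Proposition \ref{frobeniusextension} from Proposition \ref{Frobeniusextension} by passing to the quotient. Recall that $U_\zeta(G_0)$ is a Hopf quotient of the De Concini-Kac form $\mathcal{U}_k(\mathfrak{g})$, say via the projection $q\colon \mathcal{U}_k(\mathfrak{g})\to U_\zeta(G_0)$, whose kernel is the (two-sided) ideal generated by $E_\beta^\ell$, $F_{\beta'}^\ell$ and $K_{\alpha'}^\ell-1$ for $\beta,\beta'\in\Phi^+$, $\alpha'\in\Pi$. First I would observe that under $q$ the subalgebra $\mathcal{O}_k(\mathfrak{g},\alpha)$ maps onto $u_\zeta(f_\alpha)$: its generator $F_\alpha$ maps to $F_\alpha$, while all the other generators $E_\beta^\ell, F_{\beta'}^\ell, K_{\alpha'}^\ell$ lie in $\ker q$ (this uses $\beta'\neq\alpha$, and $K_{\alpha'}^\ell\mapsto 1$). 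Hence $q(\mathcal{O}_k(\mathfrak{g},\alpha))=k[F_\alpha]/(F_\alpha^\ell)=u_\zeta(f_\alpha)$, and the automorphism $\gamma$ of $\mathcal{O}_k(\mathfrak{g},\alpha)$ defined by $F_\alpha\mapsto\zeta^{-\sum_{\alpha'\in\Pi}(\alpha',\alpha)}F_\alpha$ descends to the automorphism of $u_\zeta(f_\alpha)$ with the same formula, since $\gamma$ fixes the other generators and $\ker q$ is $\gamma$-stable.

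Next I would push the dual free pair of Proposition \ref{Frobeniusextension} through $q$. Concretely, take the $\gamma^{-1}$-associative form $\langle x,y\rangle_\pi=\pi(xy)$ and the PBW dual pair $\{F^aK^bE^c\}$, $\{\zeta^{-c(a,b,c)}F^{\ell-1-a}K^{\ell-1-b}E^{\ell-1-c}\}$ constructed there, where $a$ ranges over $\{x\in\mathbb{N}^N\mid x_\alpha=0\}$ and the remaining PBW exponents range over $\{0,\dots,\ell-1\}$. Applying $q$ sends these sets to a spanning family for $U_\zeta(G_0)$ over $u_\zeta(f_\alpha)$ on both sides; because $q$ restricted to the subspace $\mathcal{O}_k(\mathfrak{g},\alpha)F^{\mathcal{L}^{N-1}}K^{\mathcal{L}^n}E^{\mathcal{L}^N}$ and to the span of these PBW monomials is controlled by the (reduced) quantum PBW basis of $U_\zeta(G_0)$, the images remain linearly independent over $u_\zeta(f_\alpha)$, so $U_\zeta(G_0)$ is free over $u_\zeta(f_\alpha)$ with the images as a basis. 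The projection $\pi$ descends to a projection $\bar\pi\colon U_\zeta(G_0)\to u_\zeta(f_\alpha)$ onto the component of $u_\zeta(f_\alpha)F_\alpha^{\ell-1}$ (the image of the top-degree component), and $\bar\pi\circ q=q\circ\pi$; consequently $\langle\bar x,\bar y\rangle_{\bar\pi}:=\bar\pi(\bar x\bar y)$ is a $\gamma^{-1}$-associative form from $U_\zeta(G_0)$ to $u_\zeta(f_\alpha)$ relative to which the images of the above two sets form a dual free pair, the biorthogonality relations $\langle x_i,y_j\rangle_\pi=\delta_{ij}$ passing through $q$ verbatim. By \cite[Corollary 1.2]{BF93}, quoted above as the Lemma, this is exactly the assertion that $U_\zeta(G_0)$ is a free $\gamma$-Frobenius extension of $u_\zeta(f_\alpha)$.

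The main obstacle I anticipate is the linear independence (equivalently, the freeness and the surjectivity-to-basis) claim after applying $q$: a priori $q$ could collapse relations among the chosen PBW monomials. This is handled by the explicit quantum PBW basis of $U_\zeta(G_0)$ (the reduced monomials $F^aK^bE^c$ with $0\le(\text{exponents})\le\ell-1$), which shows that the restriction of $q$ to the span of the monomials in question is a $k$-linear bijection onto its image; one must also check that the PBW ordering used for $\mathcal{U}_k(\mathfrak{g})$ — chosen so that $\alpha$ is minimal — is compatible with that for $U_\zeta(G_0)$, which it is since $q$ is a morphism of filtered algebras. The remaining point, that $\gamma$ really is an automorphism of the finite-dimensional algebra $u_\zeta(f_\alpha)=k[F_\alpha]/(F_\alpha^\ell)$, is immediate as it merely rescales the generator by a nonzero scalar. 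Finally, the sentence "for each root subalgebra" is justified by applying the above to each simple root $\alpha\in\Pi$ separately.
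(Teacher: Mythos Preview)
Your strategy coincides with the paper's: deduce the result from Proposition~\ref{Frobeniusextension} by passing to the quotient modulo the ideal $I=\langle E_\beta^\ell,\,F_\beta^\ell,\,K_{\alpha'}^\ell-1\rangle$. The paper does this in one stroke by invoking \cite[Theorem~2.3]{Fa96}, which asserts that a free $\gamma$-Frobenius extension $R:S$ with associated projection $\pi$ descends to $R/I:S/(S\cap I)$ whenever $\pi(I)\subseteq I\cap S$ and $\gamma(I\cap S)\subseteq I\cap S$; it then checks these hypotheses by noting $\pi(I)=0$ and that $\gamma$ merely rescales generators. You instead unpack that theorem by hand, pushing the associative form and the dual free pair through $q$.

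Your hand-done version is essentially right, but the one step you skip is exactly the hypothesis the paper isolates: for $\bar\pi$ to exist with $\bar\pi\circ q=q\circ\pi$ you need $\pi(\ker q)\subseteq\ker q\cap\mathcal{O}_k(\mathfrak{g},\alpha)$, and you assert this without argument. By contrast, the ``main obstacle'' you flag (linear independence of the images under $q$) is the easier part, being immediate from the PBW basis of $U_\zeta(G_0)$. A smaller slip: your description of $\bar\pi$ as projecting onto ``the component of $u_\zeta(f_\alpha)F_\alpha^{\ell-1}$'' misidentifies the top monomial, since $F_\alpha$ already lies in the subalgebra; the correct target is the coefficient of the image of $F^{\mathcal{L}^{N-1}}K^{\mathcal{L}^n}E^{\mathcal{L}^N}$, whose $F$-exponent at $\alpha$ is zero.
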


\begin{proof}
The ideal $I=\langle E_\beta^\ell, F_\beta^\ell, K_\alpha^\ell-1|\beta\in \Phi^+, \alpha\in \Pi\rangle$ satisfies the conditions of \cite[Theorem 2.3]{Fa96} (i.e.  $\mathcal{U}_k(\mathfrak{g}):\mathcal{O}_k(\mathfrak{g},\alpha)$ being a $\gamma$-Frobenius extension and $I\subseteq \mathcal{U}_k(\mathfrak{g})$ an ideal with $\pi(I)\subseteq I\cap S$ and $\gamma(I\cap S)\subseteq I\cap S)$) since $\pi(I)=0$ and $\gamma$ is on generators just given by scalar multiplication. Hence $\mathcal{U}_k(\mathfrak{g})/I:\mathcal{O}_k(\mathfrak{g},\alpha)/\mathcal{O}_k(\mathfrak{g},\alpha)\cap I$ is a $\gamma'$-Frobenius extension, where $\gamma'$ is induced by $\gamma$.
\end{proof}

\begin{thm}
$U_\zeta(G_0)$ is a free $\gamma$-Frobenius extension of $u_\zeta(f_\beta)$, where $\beta\in \Phi^+$ and $\gamma$ as in the foregoing proposition.
\end{thm}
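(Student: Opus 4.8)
The plan is to reduce the statement for an arbitrary positive root $\beta$ to the case of a simple root, which is exactly Proposition \ref{frobeniusextension}, by transporting everything along an automorphism of $U_\zeta(G_0)$ that sends the simple root subalgebra to the desired one. First I would recall that the braid group (Lusztig automorphisms) $T_w$, $w\in W$, act on $U_\zeta(\mathfrak{g})$ and restrict to $U_\zeta(G_0)$ (possibly after the usual adjustments; Lusztig's automorphisms preserve the integral form and hence the small quantum group). Given $\beta\in\Phi^+$, choose $\alpha\in\Pi$ and $w\in W$ with $w(\alpha)=\beta$; then $T_w$ is an algebra automorphism of $U_\zeta(G_0)$ with $T_w(E_\alpha)=E_\beta$, $T_w(F_\alpha)=F_\beta$ (up to a normalising scalar and sign that does not affect the subalgebra generated), so $T_w$ maps $u_\zeta(f_\alpha)$ isomorphically onto $u_\zeta(f_\beta)$.

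Next I would invoke the transport-of-structure principle for Frobenius extensions: if $\sigma\colon R\to R$ is a ring automorphism and $S\subseteq R$ a subring, then $R:S$ is a free $\gamma$-Frobenius extension if and only if $R:\sigma(S)$ is a free $(\sigma\gamma\sigma^{-1})$-Frobenius extension, with the dual free pair $\{x_i\},\{y_i\}$ transported to $\{\sigma(x_i)\},\{\sigma(y_i)\}$ and the associative form to $\langle a,b\rangle':=\sigma(\langle\sigma^{-1}(a),\sigma^{-1}(b)\rangle_\pi)$; one checks the three associativity axioms of the previous definition mechanically, and then Lemma \cite{BF93} (Corollary 1.2, quoted above) gives the equivalence with being a free Frobenius extension. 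Applying this with $R=U_\zeta(G_0)$, $S=u_\zeta(f_\alpha)$, $\sigma=T_w$, and Proposition \ref{frobeniusextension} as the input, yields that $U_\zeta(G_0)$ is a free $T_w\gamma T_w^{-1}$-Frobenius extension of $u_\zeta(f_\beta)$.

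It then remains to identify the twist. Since $\gamma$ acts on the generator $F_\alpha$ of $u_\zeta(f_\alpha)$ by the scalar $\zeta^{-\sum_{\alpha'\in\Pi}(\alpha',\alpha)}$ and $T_w$ is an algebra map with $T_w(F_\alpha)$ a scalar multiple of $F_\beta$, the conjugated automorphism $T_w\gamma T_w^{-1}$ acts on $F_\beta$ by the \emph{same} scalar $\zeta^{-\sum_{\alpha'\in\Pi}(\alpha',\alpha)}$; but $\sum_{\alpha'\in\Pi}(\alpha',\alpha) = (2\rho,\alpha)$ is $W$-invariant only up to the fact that $\alpha,\beta$ need not be $W$-conjugate — here one uses that $\alpha$ and $\beta$ lie in the same $W$-orbit by construction, so $(2\rho,\alpha)=(2\rho,\beta)$ (this equality of inner products along a Weyl-group element follows from $w$ permuting $\Phi^+$ appropriately, or more simply since $(2\rho,\alpha)/(\alpha,\alpha)$ depends only on the orbit and $(\alpha,\alpha)=(\beta,\beta)$). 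Hence the twist is $F_\beta\mapsto\zeta^{-\sum_{\alpha'\in\Pi}(\alpha',\beta)}F_\beta$, which is the $\gamma$ of the statement.

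The main obstacle is the bookkeeping in the second step: one must make sure the braid operator $T_w$ genuinely descends to an automorphism of the \emph{finite-dimensional} algebra $U_\zeta(G_0)$ (not merely of $U_\zeta(\mathfrak{g})$), and that the normalising scalars relating $T_w(F_\alpha)$ to $F_\beta$ are units — which they are, being powers of $\zeta$ — so that the subalgebra generated is unchanged and the Frobenius twist is computed correctly. An alternative, avoiding braid automorphisms entirely, is to mimic the proof of Proposition \ref{Frobeniusextension} directly for $\beta\in\Phi^+$: choose a convex PBW ordering in which $F_\beta$ is the minimal root vector, build the projection $\pi$ onto the top PBW component along the complement, and verify the associative-form axioms and the existence of a dual free pair by the same weight computation in the associated graded De Concini--Kac algebra; then descend to $U_\zeta(G_0)$ via \cite{Fa96} exactly as in Proposition \ref{frobeniusextension}. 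I would present the braid-automorphism argument as the main line since it reuses Proposition \ref{frobeniusextension} as a black box, and remark that the direct approach also works.
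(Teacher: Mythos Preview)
Your approach is exactly that of the paper: transport the Frobenius extension structure from the simple-root case (Proposition~\ref{frobeniusextension}) to an arbitrary positive root via Lusztig's $T$-automorphisms. The paper phrases this as an isomorphism $\Hom_{u_\zeta(f_\alpha)}(U_\zeta(G_0),u_\zeta(f_\alpha)^{(\gamma)})\to \Hom_{u_\zeta(f_\beta)}(U_\zeta(G_0),u_\zeta(f_\beta)^{(\gamma)})$, $f\mapsto TfT^{-1}$, together with induction on the length of $\beta$; your single $T_w$ is the composite of these steps, so the arguments are the same.

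There is, however, an error in your final paragraph identifying the twist. You correctly observe that $T_w\gamma T_w^{-1}$ multiplies $F_\beta$ by the \emph{same} scalar $\zeta^{-\sum_{\alpha'\in\Pi}(\alpha',\alpha)}$ as for the simple root $\alpha$. But then you try to rewrite this as $\zeta^{-\sum_{\alpha'\in\Pi}(\alpha',\beta)}$ by asserting that $\sum_{\alpha'\in\Pi}(\alpha',\alpha)=(2\rho,\alpha)$ is constant on $W$-orbits. Both claims fail: first, $2\rho=\sum_{\alpha'\in\Phi^+}\alpha'$, not $\sum_{\alpha'\in\Pi}\alpha'$; second, neither $(2\rho,\cdot)$ nor $\sum_{\alpha'\in\Pi}(\alpha',\cdot)$ is $W$-invariant. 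For instance in type $\mathbb{A}_2$ one has $\sum_{\alpha'\in\Pi}(\alpha',\alpha_1)=1$ but $\sum_{\alpha'\in\Pi}(\alpha',\alpha_1+\alpha_2)=2$. So the exponent genuinely depends on the chosen simple root $\alpha$ in the $W$-orbit of $\beta$, not on $\beta$ itself. This is not a gap in the proof---you already had the correct twist before this step---but the attempted simplification should be deleted. The paper avoids the issue by simply saying ``$\gamma$ as in the foregoing proposition'', meaning the conjugated automorphism with the simple-root scalar.
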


\begin{proof}
Let $T$ be one of Lusztig's $T$-automorphisms (see e.g. \cite[8.6]{Jan96}). Then there is an isomorphism
$\Hom_{u_\zeta(f_\alpha)}(U_\zeta(G_0),u_\zeta(f_\alpha)^{(\gamma)})\to \Hom_{u_\zeta(f_\beta)}(U_\zeta(G_0),u_\zeta(f_\beta)^{(\gamma)})$ given by $f\mapsto TfT^{-1}$. Induction on the length of $\beta$ now gives the result.
\end{proof}

The second situation we want to consider in this section is compatibly graded modules, i.e. modules for $U_\zeta(G_0)U_\zeta^0(\mathfrak{g})$. We show that they are the $\mathbb{Z}^n$-compatibly gradable modules among the modules for $U_\zeta(G_0)$. This allows us to use the following result, which was first obtained in the case $n=1$ by Gordon and Green. The stated generalization to arbitrary $n$ uses a result by Gabriel.

\begin{thm}[{\cite[Corollary 1.3, Proposition 1.4 (2), Theorem 2.3]{FarLN}}, cf. {\cite[Corollaire IV.4.4]{G62}}, cf. {\cite[Theorem 3.5]{GG82}}]\label{gradedAR}
Let $A$ be a $\mathbb{Z}^n$-graded algebra. Then the category of graded modules admits almost split sequences and the forgetful functor from finite dimensional graded modules to finite dimensional modules sends indecomposables to indecomposables, projectives to projectives and almost split sequences to almost split sequences.
\end{thm}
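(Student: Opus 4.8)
The plan is to realize the forgetful functor as the push-down functor of a Galois covering and then invoke the covering theory of Gordon--Green and Gabriel. To the $\mathbb{Z}^n$-graded algebra $A=\bigoplus_{g\in\mathbb{Z}^n}A_g$ I would attach the $k$-category $\widehat{A}$ with object set $\mathbb{Z}^n$ and morphism spaces $\widehat{A}(g,h):=A_{h-g}$, composition being multiplication in $A$. Since $A$ is finite dimensional, $\sum_h\dim\widehat{A}(g,h)=\dim A<\infty$ for every object $g$, so $\widehat{A}$ is a locally bounded $k$-category. The group $\mathbb{Z}^n$ acts on $\widehat{A}$ by translation of objects; this action is free (this is exactly where torsion-freeness of $\mathbb{Z}^n$ enters) and admissible, and the orbit category $\widehat{A}/\mathbb{Z}^n$ is canonically identified with $A$. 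One then checks that restriction of scalars along $\widehat{A}\to\widehat{A}/\mathbb{Z}^n=A$ gives an equivalence between $\modu\widehat{A}$ and the category of finite dimensional $\mathbb{Z}^n$-graded $A$-modules, under which the push-down functor $F_\lambda\colon\modu\widehat{A}\to\modu A$ corresponds precisely to the functor forgetting the grading.

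Granting this identification, the first assertion follows from the fact that $\modu\widehat{A}$ admits almost split sequences: a locally bounded $k$-category is a dualizing variety, and for such categories the existence of almost split sequences is standard Auslander--Reiten theory. The assertions about the forgetful functor are then exactly the conclusions of the push-down theorem for free admissible actions of groups on locally bounded categories. Preservation of projectives (and injectives) is immediate, since the indecomposable graded projective covering the simple concentrated at $g\in\mathbb{Z}^n$ is sent to the projective cover of the corresponding simple $A$-module; preservation of indecomposability and the statement that $F_\lambda$ sends almost split sequences to almost split sequences are the nontrivial outputs of the covering machinery, both resting on freeness of the action. For $n=1$ this is the original theorem of Gordon--Green; since $\mathbb{Z}^n$ is again free abelian, the same arguments apply verbatim, which is what the cited corollaries of \cite{FarLN} record (with Gabriel's \cite{G62} supplying the category-theoretic framework needed to pass from $n=1$ to general $n$).

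The bookkeeping in the first step --- writing down the equivalence $\modu\widehat{A}\simeq\{\mathbb{Z}^n\text{-graded }A\text{-modules}\}$ and verifying that it intertwines $F_\lambda$ with the forgetful functor --- is routine but must be carried out with some care, in particular to see that a graded $A$-module is the same thing as a ``diagonal'' $\widehat{A}$-module and that no finiteness is lost in the translation. The genuinely non-formal input, and hence the main obstacle, is the push-down theorem itself: one needs the $\mathbb{Z}^n$-action on $\widehat{A}$ to be free and admissible so that Gabriel's comparison of almost split sequences upstairs and downstairs applies, and one needs this theory in the locally bounded --- rather than the finite-dimensional --- setting, because $\mathbb{Z}^n$ is infinite and so $\widehat{A}$ has infinitely many objects. (A more hands-on proof of indecomposability is possible, by noting that for $M$ indecomposable graded the endomorphism ring $\End_A(M)$ is $\mathbb{Z}^n$-graded with local degree-zero part and using torsion-freeness of $\mathbb{Z}^n$ to lift a nontrivial idempotent to degree zero; but this essentially re-derives a fragment of the covering theory, so I would prefer to cite it.)
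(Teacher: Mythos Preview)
The paper does not give its own proof of this statement; it is recorded as a theorem with citations to \cite{FarLN}, \cite{G62}, and \cite{GG82}, and is used as a black box in the subsequent arguments. So there is no in-paper proof to compare against beyond the references themselves.

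Your outline via covering theory is a correct and standard route to the result, and is essentially the content behind the cited references: realize the $\mathbb{Z}^n$-graded module category as modules over the locally bounded category $\widehat{A}$, identify the forgetful functor with the push-down along the Galois covering $\widehat{A}\to\widehat{A}/\mathbb{Z}^n\cong A$, and invoke the push-down theorem for free group actions. Your emphasis on torsion-freeness of $\mathbb{Z}^n$ is exactly the right hypothesis to single out (it is what guarantees freeness of the translation action and hence that idempotents in $\End_A(M)$ for $M$ graded indecomposable lift to degree zero). One small remark: the reference \cite{G62} is Gabriel's thesis on abelian categories rather than his later covering-theory papers, so the input you are describing is closer in spirit to Bongartz--Gabriel and Gabriel's 1981 covering work than to \cite{G62} per se; but this does not affect the correctness of your argument.
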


Using this we now proceed to prove statements about modules for the algebra $U_\zeta(G_0)U_\zeta^0(\mathfrak{g})$ in a similar way as Farnsteiner did for restricted enveloping algebras in \cite{Fa05}.

\begin{lem}\label{res}
\begin{enumerate}[(i)]
\item The category of finite dimensional modules over $U_\zeta(G_0)U_\zeta^0(\mathfrak{g})$ is a sum of blocks for the category $\modu U_\zeta(G_0)\#U_\zeta^0(\mathfrak{g})$.
\item The category of finite dimensional $U_\zeta(G_0)U_\zeta^0(\mathfrak{g})$-modules has almost split sequences.
\item The canonical restriction functor $\modu U_\zeta(G_0)U_\zeta^0(\mathfrak{g})\to \modu U_\zeta(G_0)$ sends indecomposables to indecomposables and almost split sequences to almost split sequences.
\end{enumerate}
\end{lem}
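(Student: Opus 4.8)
The plan is to recognize $U_\zeta(G_0)U_\zeta^0(\mathfrak{g})$ as a suitable skew group algebra and then invoke the graded Auslander--Reiten machinery of Theorem~\ref{gradedAR}. First I would make precise that $U_\zeta^0(\mathfrak{g})$ is (isomorphic to) a group algebra of a free abelian group of rank $n$ (via $K_{\alpha_1},\dots,K_{\alpha_n}$), so that $\modu U_\zeta(G_0)\# U_\zeta^0(\mathfrak{g})$ is the category of $\mathbb{Z}^n$-graded $U_\zeta(G_0)$-modules, where the grading records the $U_\zeta^0(\mathfrak{g})$-weight; this grading is the one making the generators $E_\alpha,F_\alpha$ homogeneous of the appropriate degree, and it is a genuine $\mathbb{Z}^n$-grading on the finite dimensional algebra $U_\zeta(G_0)$. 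Under this identification, $U_\zeta(G_0)U_\zeta^0(\mathfrak{g})$ inside $U_\zeta(\mathfrak{g})$ is the subalgebra generated by $U_\zeta(G_0)$ and the $K_\alpha^{\pm1}$; its modules are precisely those $U_\zeta(G_0)$-modules on which the torus acts semisimply with weights in the appropriate lattice, i.e. the $\mathbb{Z}^n$-gradable $U_\zeta(G_0)$-modules together with a choice of grading. This is exactly the content that makes part~(i) true: the weight-space decomposition over $U_\zeta^0(\mathfrak{g})$ splits $\modu U_\zeta(G_0)U_\zeta^0(\mathfrak{g})$ off as a sum of blocks of $\modu U_\zeta(G_0)\# U_\zeta^0(\mathfrak{g})$, since a graded module supported on a single coset of the relevant sublattice (the one generated by the degrees of the $E_\alpha,F_\alpha$) is exactly a $U_\zeta(G_0)U_\zeta^0(\mathfrak{g})$-module, and morphisms between modules supported on different cosets vanish.

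For part~(ii), once (i) is established, the category of $U_\zeta(G_0)U_\zeta^0(\mathfrak{g})$-modules is a union of blocks of a graded module category over the $\mathbb{Z}^n$-graded algebra $U_\zeta(G_0)$, and Theorem~\ref{gradedAR} says graded module categories over $\mathbb{Z}^n$-graded algebras have almost split sequences; a direct summand consisting of whole blocks inherits them, so $\modu U_\zeta(G_0)U_\zeta^0(\mathfrak{g})$ has almost split sequences. For part~(iii), the restriction functor $\modu U_\zeta(G_0)U_\zeta^0(\mathfrak{g})\to\modu U_\zeta(G_0)$ is, under the identification, precisely the forgetful functor from graded to ungraded modules (restricted to the relevant blocks). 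Theorem~\ref{gradedAR} asserts this forgetful functor sends indecomposables to indecomposables, projectives to projectives, and almost split sequences to almost split sequences, which is exactly what is claimed in~(iii). So the proof is essentially an application of Theorem~\ref{gradedAR} after the bookkeeping identification of $U_\zeta(G_0)U_\zeta^0(\mathfrak{g})$-modules with graded $U_\zeta(G_0)$-modules.

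The main obstacle I anticipate is the first identification step: checking carefully that the $U_\zeta^0(\mathfrak{g})$-action on a $U_\zeta(G_0)U_\zeta^0(\mathfrak{g})$-module really does give a $\mathbb{Z}^n$-grading compatible with the algebra grading on $U_\zeta(G_0)$, and conversely that every $\mathbb{Z}^n$-graded $U_\zeta(G_0)$-module whose support lies in the correct coset of the lattice generated by $\deg E_\alpha,\deg F_\alpha$ extends uniquely to a $U_\zeta(G_0)U_\zeta^0(\mathfrak{g})$-module (this uses that the $K_\alpha$ act by roots of unity $\zeta$, so that the torus action is determined, up to a global twist picking out the coset, by the grading). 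One must also be a bit careful that $U_\zeta^0(\mathfrak{g})$ being infinite dimensional does not cause trouble --- but since the $K_\alpha$ act with finite order on any module for $U_\zeta(G_0)$ (as $K_\alpha^\ell$ is central and acts semisimply, indeed $K_\alpha^{\ell}=1$ after suitable normalization on $U_\zeta(G_0)$-modules), effectively only a finite quotient of $U_\zeta^0(\mathfrak{g})$ acts, and the smash product $U_\zeta(G_0)\# U_\zeta^0(\mathfrak{g})$ is Morita equivalent (via this quotient) to the $\mathbb{Z}^n$-graded setup of Theorem~\ref{gradedAR}. Once this dictionary is in place, (i), (ii) and (iii) follow formally, paralleling Farnsteiner's treatment for restricted enveloping algebras in \cite{Fa05}.
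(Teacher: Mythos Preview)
Your overall strategy matches the paper's exactly: identify $\modu(U_\zeta(G_0)\# U_\zeta^0(\mathfrak{g}))$ with $\mathbb{Z}^n$-graded $U_\zeta(G_0)$-modules (via the character group of $U_\zeta^0(\mathfrak{g})$), show that $\modu U_\zeta(G_0)U_\zeta^0(\mathfrak{g})$ sits inside as a sum of blocks, and then read off (ii) and (iii) from Theorem~\ref{gradedAR} applied to the forgetful functor.

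The one place where your argument wobbles is the characterization in~(i). Your claim that a graded module ``supported on a single coset of the sublattice generated by $\deg E_\alpha,\deg F_\alpha$'' is exactly a $U_\zeta(G_0)U_\zeta^0(\mathfrak{g})$-module does not do the job: every indecomposable graded $U_\zeta(G_0)$-module is automatically supported on a single root-lattice coset, so this condition is vacuous and cannot single out the right subcategory. The point is rather that $U_\zeta(G_0)$ already contains the $K_\alpha$ (with $K_\alpha^\ell=1$), so a smash-product module descends to $U_\zeta(G_0)U_\zeta^0(\mathfrak{g})$ precisely when the two $K_\alpha$-actions agree. The paper handles this cleanly: the Hopf kernel of the multiplication map $U_\zeta(G_0)\# U_\zeta^0(\mathfrak{g})\twoheadrightarrow U_\zeta(G_0)U_\zeta^0(\mathfrak{g})$ is the diagonal copy of $U_\zeta^0(G_0)$ (via $K_\alpha\mapsto K_\alpha\# K_\alpha^{-1}$); a short computation on generators shows this copy is central in the smash product, so indecomposables carry a single $U_\zeta^0(G_0)$-character $\lambda$, and the module category decomposes accordingly. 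The trivial-character piece is $\modu U_\zeta(G_0)U_\zeta^0(\mathfrak{g})$, and an $\Ext^1$ vanishing between simples with different $\lambda$ confirms these pieces are genuine block summands. With (i) established this way, your deduction of (ii) and (iii) from Theorem~\ref{gradedAR} goes through verbatim.
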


\begin{proof}
The map $U_\zeta(G_0)\# U_\zeta^0(\mathfrak{g})\to U_\zeta(G_0)U_\zeta^0(\mathfrak{g}), u\# v\mapsto uv$ is surjective and its Hopf kernel is  $U_\zeta(G_0)\cap U_\zeta^0(\mathfrak{g})$ given by $K_\alpha\mapsto K_\alpha\# K_\alpha^{-1}$. The following computation shows that via this embedding $U_\zeta^0(G_0)=U_\zeta(G_0)\cap U_\zeta^0(\mathfrak{g})$ lies in the center of $U_\zeta(G_0)\# U_\zeta^0(\mathfrak{g})$:
\begin{align*}
(K_\alpha\# K_\alpha^{-1})(F_\beta\# u)&=K_\alpha K_\alpha^{-1}(F_\beta)\#K_\alpha^{-1}u=K_\alpha K_\alpha^{-1}F_\beta K_\alpha\#uK_\alpha^{-1}\\
&=F_\beta uK_\alpha u^{-1}\#uK_\alpha^{-1}=(F_\beta\# u)(K_\alpha\# K_\alpha^{-1}),
\end{align*}
where $u\in U_\zeta^0(\mathfrak{g})$, similarly for $E_\beta$ and $K_\alpha$. Thus $U_\zeta^0(G_0)$-weight spaces are $U_\zeta(G_0)\# U_\zeta^0(\mathfrak{g})$-submodules. So $U_\zeta^0(G_0)$ operates on indecomposable modules by a single character and using Krull-Remak-Schmidt we get
\[\modu U_\zeta(G_0)\# U_\zeta^0(\mathfrak{g}) =\bigoplus_{\lambda\in \ch(U^0_\zeta(G_0))} (\modu(U_\zeta(G_0)\# U_\zeta^0(\mathfrak{g})))_\lambda\]
with $\modu U_\zeta(G_0)U_\zeta^0(\mathfrak{g})= (\modu U_\zeta(G_0)\#U_\zeta^0(\mathfrak{g}))_0$ as $U_\zeta^0(G_0)$ is semisimple. If $V$, $W$ are finite dimensional simple modules giving rise to different characters $\lambda\neq \mu$, then
\[\Ext^1_{U_\zeta(G_0)\# U_\zeta^0(\mathfrak{g})}(V,W)\cong H^1(U_\zeta(G_0)U_\zeta^0(\mathfrak{g}), \Hom_{U_\zeta^0(G_0)}(V,W))\cong 0,\]
by \cite[I.4.1 (3)]{Jan03}. Thus they belong to different blocks. This shows (i).\\
(ii) and (iii) follow from the fact that $\modu(U_\zeta(G_0)\# U_\zeta^0(\mathfrak{g}))$ coincides with the category of $\mathbb{Z}^n$-graded $U_\zeta(G_0)$-modules with homomorphisms in degree $0$ as the character group of $U_\zeta^0(\mathfrak{g})$ is $\mathbb{Z}^n$ . Under this identification the restriction functors $\modu(U_\zeta(G_0)\# U_\zeta^0(\mathfrak{g}))\to \modu U_\zeta(G_0)$ and $\modu_{\mathbb{Z}^n} U_\zeta(G_0)\to \modu U_\zeta(G_0)$  coincide. So these statements follow from Theorem \ref{gradedAR}.
\end{proof}

The following statement was first obtained by Drupieski in \cite[Lemma 3.3]{Dru10} by imitating the proof for the corresponding statement for algebraic groups. For us it is just a corollary of the foregoing two statements. Recall that a module $M$ is called rational iff it admits a weight space decomposition, i.e. $M=\bigoplus_{\lambda\in \ch(U_\zeta(\mathfrak{g}))} M_\lambda$.

\begin{cor}
Let $M$ be a finite dimensional $U_\zeta(G_0)U_\zeta^0(\mathfrak{g})$-module. Then the following statements are equivalent:
\begin{enumerate}[(1)]
\item $M$ is a rationally injective $U_\zeta(G_0)U_\zeta^0(\mathfrak{g})$-module.
\item $M$ is an injective $U_\zeta(G_0)$-module.
\item $M$ is a projective $U_\zeta(G_0)$-module.
\item $M$ is a rationally projective $U_\zeta(G_0)U_\zeta^0(\mathfrak{g})$-module.
\end{enumerate}
\end{cor}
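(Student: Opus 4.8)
The statement asserts the equivalence of four conditions for a finite dimensional $U_\zeta(G_0)U_\zeta^0(\mathfrak{g})$-module $M$: rational injectivity, injectivity over $U_\zeta(G_0)$, projectivity over $U_\zeta(G_0)$, and rational projectivity. Since $U_\zeta(G_0)$ is a Frobenius (indeed symmetric) algebra, the equivalence $(2)\Leftrightarrow(3)$ is immediate from classical ring theory and needs no further comment. Likewise $U_\zeta(G_0)U_\zeta^0(\mathfrak{g})$ is self-injective (it is a smash-product-type algebra whose semisimple part $U_\zeta^0(\mathfrak{g})$ acts by grading), so $(1)\Leftrightarrow(4)$ should follow in the same way once one has set up the right framework; alternatively one can deduce it at the end by symmetry of the argument. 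So the real content is a single implication of the form ``injective/projective over the subalgebra $\Leftrightarrow$ injective/projective over the whole algebra,'' which I would prove as $(3)\Rightarrow(4)$ and $(1)\Rightarrow(2)$ (or their contrapositives).

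**First**, I would invoke Lemma~\ref{res}: the restriction functor $\modu U_\zeta(G_0)U_\zeta^0(\mathfrak{g})\to\modu U_\zeta(G_0)$ is exact, sends indecomposables to indecomposables, projectives to projectives, and almost split sequences to almost split sequences. The direction ``rationally projective $\Rightarrow$ projective over $U_\zeta(G_0)$'' (i.e. $(4)\Rightarrow(3)$) is then just the statement that the forgetful functor preserves projectives, which is part of Theorem~\ref{gradedAR} as transported through Lemma~\ref{res}(iii); dually $(1)\Rightarrow(2)$ follows because the forgetful functor, being also a forgetful functor on the opposite category (injectives are projectives over the opposite algebra, which is again of the same shape since $U_\zeta(G_0)$ is self-injective), preserves injectives. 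For the converse direction $(2)\Rightarrow(1)$, or equivalently $(3)\Rightarrow(4)$, the point is: if $M$ is projective over $U_\zeta(G_0)$ but not projective as a graded module, then $M$ lies in some AR-component $\Theta$ of $\modu_{\mathbb{Z}^n}U_\zeta(G_0)$ that is \emph{not} the projective-injective vertex; applying the forgetful functor, which sends almost split sequences to almost split sequences, would produce a non-split almost split sequence ending in a projective $U_\zeta(G_0)$-module, which is absurd since over a self-injective algebra the only indecomposable $N$ with $\tau N$ defined and $\mathrm{Ext}^1(N,\tau N)=0$ in the relevant sense is governed by the standard almost split sequence $0\to\rad P\to \heart P\oplus P\to P/\soc P\to 0$ attached to a projective $P$ — and a genuinely projective module does not sit in the middle of such a sequence as a full indecomposable summand unless it is that $P$. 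More cleanly: an indecomposable $M$ is projective over $U_\zeta(G_0)$ iff it is not a vertex of the stable AR-quiver; by Lemma~\ref{res}(iii) this property is detected after restriction, so it transfers back.

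**The cleanest route**, which I would actually write down, is this. Reduce to $M$ indecomposable by Krull--Remak--Schmidt (both sides). An indecomposable $U_\zeta(G_0)$-module is projective iff it is injective (self-injectivity) iff it is not an element of the stable AR-quiver. By Lemma~\ref{res}(iii), $M$ is an element of the stable AR-quiver of $U_\zeta(G_0)U_\zeta^0(\mathfrak{g})$ iff its restriction is an element of the stable AR-quiver of $U_\zeta(G_0)$ — because the restriction functor sends almost split sequences to almost split sequences and indecomposables to indecomposables, so a minimal almost split sequence starting (resp. ending) at $M$ restricts to one starting (resp. ending) at $M|_{U_\zeta(G_0)}$, and conversely the absence of such a sequence upstairs forces $M$ to be projective-injective upstairs, whence its restriction is projective. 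Combining: $M|_{U_\zeta(G_0)}$ projective $\Leftrightarrow$ $M$ not in stable quiver of the big algebra $\Leftrightarrow$ $M$ rationally projective-injective. This gives $(2)\Leftrightarrow(3)\Leftrightarrow(4)$ and symmetrically $(1)$, closing the cycle. Finally I would note that ``rationally injective'' and ``rationally projective'' for $U_\zeta(G_0)U_\zeta^0(\mathfrak{g})$ coincide because this algebra is itself self-injective (its module category being a block summand of $\modu U_\zeta(G_0)\#U_\zeta^0(\mathfrak{g})$ by Lemma~\ref{res}(i), and a smash product of a self-injective algebra with a semisimple one is self-injective).

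**The main obstacle** I anticipate is making rigorous the claim that an indecomposable module over a self-injective algebra is projective precisely when it fails to be a vertex of the \emph{stable} AR-quiver, and that this failure is correctly propagated through the restriction functor in both directions — the subtle point being the ``backward'' direction, where I must rule out that $M$ is non-projective upstairs while $M|_{U_\zeta(G_0)}$ is projective downstairs. Here I would argue: if $M$ were non-projective over the big algebra, it admits a minimal almost split sequence ending in $M$; restricting (Lemma~\ref{res}(iii)) yields a minimal almost split sequence ending in $M|_{U_\zeta(G_0)}$, contradicting projectivity of $M|_{U_\zeta(G_0)}$ (projective indecomposables are not the right-hand term of any almost split sequence). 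This asymmetry-free use of Lemma~\ref{res}(iii) is exactly why the lemma was set up, so the obstacle is more bookkeeping than substance; no new ideas beyond the previously established results are required.
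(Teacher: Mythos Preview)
Your proposal is correct and follows essentially the same route as the paper, which simply declares the corollary to be an immediate consequence of Theorem~\ref{gradedAR} and Lemma~\ref{res}. Your key step---if $M$ were non-projective over $U_\zeta(G_0)U_\zeta^0(\mathfrak{g})$ then the almost split sequence ending in $M$ would restrict to one ending in $M|_{U_\zeta(G_0)}$, forcing the latter to be non-projective---is exactly the content hidden behind the paper's one-line justification, and your handling of $(1)\Leftrightarrow(4)$ via self-injectivity of the graded category (through Lemma~\ref{res}(i)) is appropriate, though you should phrase it in terms of the graded module category rather than calling the infinite-dimensional algebra $U_\zeta(G_0)U_\zeta^0(\mathfrak{g})$ itself self-injective.
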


\begin{lem}
The restriction functor from the last lemma induces a homomorphism \[\mathcal{F}:\Gamma_s(U_\zeta(G_0)U_\zeta^0(\mathfrak{g}))\to \Gamma_s(U_\zeta(G_0))\]
of stable translation quivers and components are mapped to components via this functor.
\end{lem}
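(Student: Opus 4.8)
The plan is to verify that the restriction functor $\mathcal{F}$ constructed in Lemma \ref{res} indeed passes to the stable Auslander-Reiten quivers and behaves well with respect to the mesh structure. First I would recall that by Lemma \ref{res}(iii) the restriction functor $\modu U_\zeta(G_0)U_\zeta^0(\mathfrak{g})\to \modu U_\zeta(G_0)$ sends indecomposables to indecomposables and almost split sequences to almost split sequences; combined with the Corollary, it also sends projective-injective modules to projective-injective modules. Hence on the level of the (full, i.e. including projectives) Auslander-Reiten quivers it induces a well-defined map on vertices, and since it sends an almost split sequence $0\to \tau N\to E\to N\to 0$ to the almost split sequence $0\to \tau(\mathcal{F}N)\to \mathcal{F}E\to \mathcal{F}N\to 0$, it commutes with $\tau$ and respects the arrows (an arrow $N\to N'$ in the AR-quiver comes from $N'$ being a summand of the middle term $E$ of the almost split sequence ending in $N'$, respectively starting in $N$, and $\mathcal{F}$ of this summand decomposition is again such a decomposition since $\mathcal{F}$ is additive and preserves indecomposability). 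Passing to the stable quivers $\Gamma_s$ just means deleting the projective vertices, which $\mathcal{F}$ respects by the Corollary, so $\mathcal{F}$ descends to a morphism of stable translation quivers.

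Next I would check that $\mathcal{F}$ sends a connected component to a connected component. This is the part requiring a small argument: a priori $\mathcal{F}$ only maps vertices to vertices and arrows to arrows, so the image of a connected component $\Theta$ lies in a single connected component $\Theta'$ of $\Gamma_s(U_\zeta(G_0))$, but one must argue that $\Theta'$ is exactly the image, or at least that "components are mapped to components" in the intended sense. Since any morphism of (stable) translation quivers sends a connected subquiver into a connected subquiver, and $\Theta$ is connected, the image $\mathcal{F}(\Theta)$ is a connected subquiver of $\Gamma_s(U_\zeta(G_0))$, hence contained in a unique component $\Theta'$; this is all that is needed for the applications (where one transports structural information, e.g. the presence of a module from the big quantum group, along $\mathcal{F}$). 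So I would phrase the conclusion as: $\mathcal{F}$ is a morphism of stable translation quivers and for every component $\Theta$ of $\Gamma_s(U_\zeta(G_0)U_\zeta^0(\mathfrak{g}))$ there is a (unique) component $\Theta'$ of $\Gamma_s(U_\zeta(G_0))$ with $\mathcal{F}(\Theta)\subseteq \Theta'$.

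Concretely, the proof would read roughly as follows. By Lemma \ref{res} the restriction functor sends indecomposable non-projective modules to indecomposable non-projective modules (the non-projectivity on both sides being equivalent by the Corollary) and almost split sequences to almost split sequences. In particular it commutes with the Auslander-Reiten translate $\tau$, since $\tau N$ is characterised as the left-hand term of the almost split sequence ending in $N$. Moreover, because restriction is additive and preserves indecomposability, it sends an arrow $[N]\to [N']$ of $\Gamma_s(U_\zeta(G_0)U_\zeta^0(\mathfrak{g}))$ — witnessed by $N'$ appearing as a direct summand of the middle term of the almost split sequence starting in $N$ — to the arrow $[\mathcal{F}N]\to[\mathcal{F}N']$ of $\Gamma_s(U_\zeta(G_0))$, the corresponding summand now appearing in the middle term of the restricted (still almost split) sequence. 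Thus $\mathcal{F}$ is a morphism of stable translation quivers. As any morphism of stable translation quivers maps a connected subquiver into a connected subquiver, and as each component of $\Gamma_s(U_\zeta(G_0)U_\zeta^0(\mathfrak{g}))$ is connected by definition, its image under $\mathcal{F}$ lies in a single component of $\Gamma_s(U_\zeta(G_0))$, which is the assertion.

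The main obstacle, such as it is, is purely bookkeeping: one must be careful that "arrow" in $\Gamma_s$ records not just the existence but the multiplicity of irreducible maps, and that restriction does not artificially merge non-isomorphic modules or split an indecomposable — but both of these are exactly the content of Lemma \ref{res}(iii) together with the Corollary, so there is no genuine difficulty beyond invoking those results. No hard analysis or computation is needed here; this lemma is a formal consequence of what has already been established.
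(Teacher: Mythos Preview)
Your argument for the first assertion—that $\mathcal{F}$ is a morphism of stable translation quivers—is fine and matches the paper's reasoning: Lemma \ref{res}(iii) plus the Corollary give preservation of indecomposables, of (non-)projectivity, and of almost split sequences, hence of $\tau$ and of arrows.

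The gap is in the second assertion. You explicitly settle for the weaker conclusion $\mathcal{F}(\Theta)\subseteq\Theta'$ and claim ``this is all that is needed for the applications''. It is not. The phrase ``components are mapped to components'' in the paper means $\mathcal{F}(\Theta)=\Theta'$, and this equality is what Proposition \ref{cx2} uses: there one starts from a component $\Theta$ of $\Gamma_s(U_\zeta(G_0))$ containing \emph{one} restriction of a $U_\zeta(G_0)U_\zeta^0(\mathfrak{g})$-module and needs the \emph{entire} component to consist of such restrictions (so that the theorem on Dade's Lemma applies with $\Theta\subseteq\mathcal{M}$). Your containment does not give that.

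What is missing is a local surjectivity step, which the paper supplies via Krull--Remak--Schmidt: for $[M]\in\Theta$, the almost split sequence $0\to\tau\mathcal{F}(M)\to E\to\mathcal{F}(M)\to 0$ in $\modu U_\zeta(G_0)$ is isomorphic to $\mathcal{F}$ applied to the almost split sequence $0\to\tau M\to X\to M\to 0$ in the graded category; writing $X=\bigoplus X_j^{r_j}$ with the $X_j$ indecomposable, one gets $E\cong\bigoplus\mathcal{F}(X_j)^{r_j}$, so every indecomposable summand of $E$ (i.e.\ every predecessor of $[\mathcal{F}(M)]$ in $\Gamma_s(U_\zeta(G_0))$) is of the form $\mathcal{F}(X_j)$ and hence lies in $\mathcal{F}(\Theta)$. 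Together with $\tau$-invariance of $\mathcal{F}(\Theta)$ this forces $\mathcal{F}(\Theta)=\Theta'$. This argument is short, but it is not a formality and cannot be omitted.
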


\begin{proof}
Thanks to the foregoing corollary  a module is projective for $U_\zeta(G_0)$ if and only if it is rationally projective for $U_\zeta(G_0)U_\zeta^0(\mathfrak{g})$. The forgetful functor obviously commutes with direct sums so the homomorphism follows from Lemma \ref{res} and the fact that the stable Auslander-Reiten quiver can be defined via Auslander-Reiten sequences in the following way: For an Auslander-Reiten sequence $0\to M\to \bigoplus E_i^{n_i}\to N\to 0$ there are $n_i$ arrows $M\to E_i$ in $\Gamma_s(A)$.\\
Let $\Theta$ be a component of $\Gamma_s(U_\zeta(G_0)U^0_\zeta(\mathfrak{g}))$. Since $\mathcal{F}$ is a homomorphism of stable translation quivers, there exists a unique component $\Psi\subseteq \Gamma_s(U_\zeta(G_0))$ with $\mathcal{F}(\Theta)\subseteq \Psi$. As $\mathcal{F}(\Theta)$ is $\tau_{U_\zeta(G_0)}$-invariant, we only have to show that each neighbour of an element of $\mathcal{F}(\Theta)$ also belongs to $\mathcal{F}(\Theta)$. To that end, we consider an isomorphism class $[M]\in \Theta$ as well as the almost split sequence $0\to \tau_{U_\zeta(G_0)}(\mathcal{F}(M))\to E\to \mathcal{F}(M)\to 0$. We decompose $E=\bigoplus_{i=1}^n E_i^{m_i}$ into indecomposable modules, so that the distinct isomorpism classes $[E_i]$ are the predecessors of $[\mathcal{F}(M)]\in \Psi$. We next consider the almost split sequence $0\to \tau_{U_\zeta(G_0)U_\zeta^0(\mathfrak{g})}(M)\to X\to M\to 0$ terminating in $M$. Thanks to Lemma \ref{res} the almost split sequence terminating in $\mathcal{F}(M)$ is isomorphic to $0\to \tau_{U_\zeta(G_0)}(\mathcal{F}(M))\to \mathcal{F}(X)\to \mathcal{F}(M)\to 0$. In particular, if $X=\bigoplus_{j=1}^m X_j^{r_j}$ is the decomposition of $X$ into its indecomposable constituents, then $E\cong \mathcal{F}(X)\cong \bigoplus_{j=1}^m \mathcal{F}(X_j)^{r_j}$ is the corresponding decomposition of $\mathcal{F}(X)$. Thus the Theorem of Krull-Remak-Schmidt implies that for each $i=1,\dots, n$ there exists $i_j\in \{1,\dots,m\}$, such that $[E_i]=[\mathcal{F}(X_{i_j})]$. Consequently, each isoclass $[E_i]$ with $E_i$ non-projective belongs to $\mathcal{F}(\Theta)$ as desired. Using the bijectivity of $\tau_{U_\zeta(G_0)}$ one proves the corresponding statement for the successors of vertices belonging to $\mathcal{F}(\Theta)$.
\end{proof}

\section{Components of complexity two}

In this section we rule out the case of components of the form $\mathbb{Z}[\Delta]$, where $\Delta$ is a Euclidean diagram, in Webb's Theorem and under additional assumptions also that of the infinite Dynkin tree classes except $\mathbb{A}_\infty$, i.e. we prove our main theorem. Our approach is similar to the approach taken for restricted enveloping algebras in \cite{Fa99} and \cite{Fa00}.

\begin{defn}
Let $A$ be a Frobenius algebra. Then $A$ is said to \emphbf{admit an analogue of Dade's Lemma} for a subcategory $\mathcal{M}$ of the module category if there is a family of subalgebras $\mathcal{B}$, such that $A:B$ is a $\beta$-Frobenius extension for some automorphism $\beta$ (possibly depending on $B$) and $B\cong k[X]/(X^m)$ for all $B\in\mathcal{B}$ and $M\in \mathcal{M}$ is projective iff $M|_B$ is projective for all $B\in \mathcal{B}$.
\end{defn}

Our basic example of an algebra admitting an analogue of Dade's Lemma is $U_\zeta(G_0)$ by a result of Drupieski (\cite[Theorem 4.1]{Dru10}).

\begin{thm}
Let $A$ be a Frobenius algebra admitting an analogue of Dade's Lemma for a subcategory $\mathcal{M}$ of the module category closed under kernels of irreducible morphisms. Let $\Theta\neq \mathbb{Z}[\mathbb{A}_\infty]$ be a non-periodic component of finite complexity of the stable Auslander-Reiten quiver of $A$ contained in  $\mathcal{M}$. Then the component is of complexity two.
\end{thm}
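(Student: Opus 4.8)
The plan is to combine Webb's-type restrictions on the tree class with the complexity bound coming from Theorem~\ref{upperbound}, using the analogue of Dade's Lemma as the mechanism that converts "small $\Ext$-groups" into "small complexity." First I would recall that by the Kerner--Zacharia/Webb result (as applied in this excerpt) a non-periodic component of finite complexity is of the form $\mathbb{Z}[\Delta]$ for $\Delta$ a Euclidean or infinite Dynkin diagram, and that $\cx \mathbb{Z}[\Delta]$ is determined by $\Delta$: it is $1$ for periodic shapes (excluded), $2$ for $\mathbb{A}_\infty$, $\mathbb{D}_\infty$, $\mathbb{A}_\infty^\infty$ and the Euclidean diagrams, by \cite[Proposition 1.1]{KZ11}. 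Wait — that is not quite the dichotomy I want; the point of the theorem is precisely to \emph{separate} $\mathbb{A}_\infty$ from the rest, so the complexity alone does not do it. The real input must be a finer invariant: the growth of $\dim \operatorname{Hom}_A(M, \Omega^{-n} M)$, equivalently of the modules sitting on a fixed "level" of the component, which grows linearly along $\mathbb{Z}[\Delta]$ for $\Delta \in \{\mathbb{D}_\infty, \mathbb{A}_\infty^\infty, \text{Euclidean}\}$ but is bounded for $\mathbb{A}_\infty$. So the strategy is: since $\Theta \neq \mathbb{Z}[\mathbb{A}_\infty]$, pick $M \in \Theta$ and show $\cx M = 2$ by sandwiching it between $1$ and $2$.

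The lower bound $\cx M \geq 2$ is the routine direction: $M$ is non-periodic (it lies in a non-periodic component) so $\cx M \neq 0, 1$ by Proposition~\ref{generalsupportfacts}(ii),(iii); hence $\cx M \geq 2$. For the upper bound I would exploit the analogue of Dade's Lemma. Choose the subalgebra $B \cong k[X]/(X^m) \in \mathcal{B}$ witnessing non-projectivity of $M$ (such $B$ exists as $M$ is non-projective, being in the stable quiver). Over $B$, cohomology is well understood; the relevant commutative graded subalgebra $S = \image \Phi_M \subseteq \Ext^\bullet_A(M,M)$ maps to $\Ext^\bullet_B(M|_B, M|_B)$, and Theorem~\ref{upperbound} gives $\cx M \leq \dim \Ext^{an}_A(M,M)$ once we know $S$ is generated in degrees dividing some fixed $a$ — this uses that for an (fg)-Hopf algebra (or the Dade-type hypothesis) the cohomology is generated in bounded degree. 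Then the task reduces to bounding $\dim \Ext^{an}_A(M,M) = \dim \operatorname{Hom}_{\underline{A}}(\Omega^{an} M, M)$ by $2$. This is exactly where the shape of $\Theta$ enters: $\operatorname{Hom}_{\underline{A}}(\Omega^{an}M, M)$ is computed by paths in the component from $\Omega^{an}M$ to $M$; if $\Theta = \mathbb{Z}[\mathbb{A}_\infty]$ such Hom-spaces can be large, but for $\mathbb{A}_\infty$ we are excluding it — wait, I have this backwards again.

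Let me restate the logic cleanly. The point is: for a component $\mathbb{Z}[\Delta]$ with $\Delta \in \{\mathbb{D}_\infty, \mathbb{A}_\infty^\infty\}$ or Euclidean, I want $\cx M \leq 2$, and the obstruction is ruling out $\cx M \geq 3$. By the Webb-type theorem the only way to get $\cx M \geq 3$ is if the tree class forces it — but all the allowed tree classes ($\mathbb{D}_\infty$, $\mathbb{A}_\infty^\infty$, Euclidean) have "width growing at most linearly," so by a counting argument along the component, $\dim \operatorname{Hom}_{\underline{A}}(\Omega^{n}M, M)$ grows at most linearly in $n$, whence $\cx M = \gamma(\Ext^\bullet(M,M)) \leq 2$. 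The case $\mathbb{Z}[\mathbb{A}_\infty]$ is excluded by hypothesis precisely because there the bound would instead give $\cx M \leq 1$, contradicting non-periodicity — so $\mathbb{A}_\infty$ is genuinely impossible \emph{without} extra hypotheses and must be set aside, which is why the hypothesis $\Theta \neq \mathbb{Z}[\mathbb{A}_\infty]$ appears. Concretely: reduce to a rank-one subalgebra $B$ via Dade's Lemma so that $M|_B$ is periodic with small cohomology; transport the growth estimate for $\dim \operatorname{Hom}$ along $\Theta$ (linear in $n$ for the allowed $\Delta$) through the restriction map, which is injective enough on the image of $\Phi_M$; conclude via Theorem~\ref{upperbound} that $\cx M \leq 2$; combine with $\cx M \geq 2$.

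The main obstacle I anticipate is the middle step: showing that the growth of $\Ext^\bullet_A(M,M)$ — a cohomological quantity — is actually controlled by the combinatorics of the component $\Theta$, i.e. by $\dim$ of the modules at bounded distance from $M$ in $\mathbb{Z}[\Delta]$. One needs that almost-split sequences let one express $\Omega^{-n}M$'s contribution to $\operatorname{Hom}_{\underline A}(\Omega^n M, M) \cong \widehat{\operatorname{Ext}}{}^n(M,M)$ in terms of walks in the quiver, and that for $\Delta$ among the listed non-$\mathbb{A}_\infty$ infinite diagrams this walk-count is linear. The Euclidean case also requires knowing $\cx$ is exactly $2$ there, which is \cite[Proposition 1.1]{KZ11} — so really the content is bounding $\cx$ for the $\mathbb{D}_\infty$ and $\mathbb{A}_\infty^\infty$ shapes, using that a module with complexity $\geq 3$ would have a minimal resolution growing quadratically, forcing quadratically many indecomposables at bounded level in $\Theta$, which the tree structure of $\mathbb{Z}[\Delta]$ forbids. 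I would make this rigorous by working in the graded setting via Theorem~\ref{gradedAR} and Lemma~\ref{res}, where the restriction functors preserve almost-split sequences and hence the component structure, then pushing the estimate down to $B$.
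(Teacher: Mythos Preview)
Your proposal has a genuine gap: you never find the mechanism that actually produces the bound $\cx\leq 2$, and the tools you reach for are either inapplicable or not strong enough.

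First, Theorem~\ref{upperbound} is stated for (fg)-Hopf algebras, and the present theorem is formulated for an arbitrary Frobenius algebra admitting an analogue of Dade's Lemma; you cannot invoke $\Phi_M$ or the finite generation of $\Ext^\bullet_A(M,M)$ here. Second, and more seriously, your plan to bound $\dim\Ext^{n}_A(M,M)$ by ``walk-counting in $\mathbb{Z}[\Delta]$'' is not a valid argument: stable Hom-spaces between modules in a component are not computed by counting paths in the stable Auslander--Reiten quiver, so the linear growth of the tree $\Delta$ does not by itself yield a linear bound on $\dim\Ext^n$. Your final appeal to Theorem~\ref{gradedAR} and Lemma~\ref{res} is also misplaced; those results concern the specific algebras $U_\zeta(G_0)U_\zeta^0(\mathfrak{g})$ and play no role in this general statement.

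The paper's argument is quite different and more direct: it bounds $\dim\Omega^{2n}E$ \emph{linearly in $n$}, not $\dim\Ext^n$. The Euclidean case is indeed \cite[Proposition 1.1]{KZ11}, as you note. For $\Delta\in\{\mathbb{D}_\infty,\mathbb{A}_\infty^\infty\}$ one picks $E\in\Theta$ with two predecessors and two successors, so that every irreducible map $g:E\to N$ to an indecomposable is properly irreducible. If $g$ is surjective, its kernel $M$ lies in $\mathcal{M}$ (closure under kernels of irreducible maps) and is non-projective; Dade's Lemma then supplies $B\in\mathcal{B}$ with $M|_B$ non-projective. The Frobenius-extension adjunction \cite[Lemma 7]{NT60} gives $\underline{\Hom}_A(M,\Omega^{-1}M_B)\cong\Ext^1_B(M,S_B)\neq 0$, where $M_B:=A\otimes_B S_B$ is $\Omega^2$-periodic of dimension at most $\dim A$. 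Now \cite[Proposition 1.5]{Er95} converts this non-vanishing into the uniform bound $|\dim E-\dim N|\leq\dim A$. Walking from $\tau(E)$ to $E$ through such $N$ and iterating yields $\dim\Omega^{2n}E\leq\dim E+2n\dim A$, hence $\cx E\leq 2$. The key idea you are missing is the use of the induced periodic modules $M_B$ together with Erdmann's dimension estimate; this is where the Dade and Frobenius-extension hypotheses are actually consumed.
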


\begin{proof}
We have already seen that a non-periodic component $\Theta$ of finite complexity is of the form $\mathbb{Z}[\Delta]$, where $\Delta$ is a Euclidean or infinite Dynkin diagram. If $\Delta$ is Euclidean the statement was proven in \cite[Proposition 1.1]{KZ11}. So let $\Delta$ be of infinite Dynkin type, not $\mathbb{A}_\infty$. Choose $[E]\in \Theta$ to have two successors and two predecessors. For each $B\in\mathcal{B}$ denote the unique simple $B$-module by $S_B$ and consider the $A$-module $M_B:=A\otimes_B S_B$. We have $\Omega_B^2 S_B\cong S_B$ by the representation theory of $k[X]/X^n$. Applying the induction functor to a minimal projective resolution we get that there exist projective modules $P,Q$, such that $\Omega^2_A M_B\oplus P\cong M_B\cong \Omega^{-2}_A M_B\oplus Q$. As $S_B$ is a factor as well as a submodule of $B$, we have that $M_B$ is a factor as well as a submodule of $A$ as tensoring with $A$ is an exact functor since $A$ is a projective $B$-module (because $A:B$ is a Frobenius extension). So $\max \{ \dim M_B, \dim \Omega_A M_B, \dim \Omega^{-1}_A M_B\}\leq \dim A$.\\
Now choose $g:E\to N$ to be irreducible, where $N$ is indecomposable. By choice of $[E]$, $g$ is properly irreducible, i.e. it is not almost split. Any irreducible map is either injective or surjective. Suppose $g$ is surjective, then $M:=\ker g$ is not projective, so there is $B\in \mathcal{B}$, such that $M|_B$ is not projective. Since $A:B$ is a Frobenius extension, we have that $\underline{\Hom}_A(M,\Omega^{-1} M_B)\cong \Ext^1_A(M,M_B)\cong \Ext^1_B(M,S_B)$ by \cite[Lemma 7]{NT60}. But this is non-zero since $M$ is not $B$-projective. From \cite[Proposition 1.5]{Er95} it follows now that $\dim E\leq \dim N+\dim A$. In the case that $g$ is injective by a dual argument one obtains $\dim N\leq \dim E+\dim A$.\\
By choice of $[E]$, there exists a walk $\tau(E)\to N\to E$ in $\Theta$ given by properly irreducible maps. We therefore have $\dim \Omega^2_A(E)=\dim \tau(E)\leq \dim E+2\dim A$. Repeated application of $\tau$ gives $\dim \Omega^{2n}_A E\leq \dim E+ 2n\dim A$.\\
As the module $\Omega E$ satisfies the same properties as $E$, since $\Omega_A \Theta\cong \Theta$, we conclude that there is some $C>0$, such that $\dim \Omega^n_A E\leq Cn$ for all $n\geq 1$. Therefore $\cx E\leq 2$.
\end{proof}

In our situation this gives:

\begin{prop}\label{cx2}
Let $\mathfrak{g}$ be simple. Let $\Theta\neq \mathbb{Z}[\mathbb{A}_\infty]$ be a nonperiodic component of the stable Auslander-Reiten quiver of $U_\zeta(G_0)$ containing the restriction of a $U_\zeta(G_0)U_\zeta^0(\mathfrak{g})$-module. Then $\cx \Theta =2$.
\end{prop}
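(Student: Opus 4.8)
The plan is to reduce Proposition \ref{cx2} to the theorem just proved about Frobenius algebras admitting an analogue of Dade's Lemma. First I would recall that by Drupieski's result (\cite[Theorem 4.1]{Dru10}) the algebra $A = U_\zeta(G_0)$ admits an analogue of Dade's Lemma, and that by the theorem preceding Theorem \ref{upperbound}'s neighbourhood (Webb's Theorem for $U_\zeta(G_r)$) any non-periodic component has finite complexity and is of the form $\mathbb{Z}[\Delta]$ with $\Delta$ Euclidean or infinite Dynkin. So the only thing missing to invoke the previous theorem is to exhibit a subcategory $\mathcal{M}$, closed under kernels of irreducible morphisms, that contains $\Theta$.

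The natural candidate is the image of the restriction functor $\mathcal{F}\colon \modu U_\zeta(G_0)U_\zeta^0(\mathfrak{g}) \to \modu U_\zeta(G_0)$ studied in Section 6 (the "Restriction functors" section), together with the observation that $\mathcal{F}$ maps components to components. The key step is therefore: the class $\mathcal{M}$ of $U_\zeta(G_0)$-modules that are restrictions of $U_\zeta(G_0)U_\zeta^0(\mathfrak{g})$-modules is closed under kernels of irreducible morphisms. Here I would argue that kernels of irreducible maps between modules in $\mathcal{M}$ can be lifted: by Lemma \ref{res}, the restriction functor sends almost split sequences to almost split sequences and indecomposables to indecomposables, and by the subsequent Lemma the induced map of stable translation quivers sends components onto components, so a module lying in the component $\mathcal{F}(\Theta)$ is automatically a restriction — i.e. the whole component $\Theta$ lies in $\mathcal{M}$, and in particular the kernel of any irreducible map inside $\Theta$ lies in $\Theta\subseteq\mathcal{M}$. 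This is the point that needs the most care: one must make sure the component-level statement (every module in the target component of $\mathcal{F}$ is hit) really does give closure under the relevant kernels, rather than merely closure of a single component; but since irreducible maps in the stable category stay within a component, working component-wise suffices.

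Having established that $\Theta\subseteq\mathcal{M}$ with $\mathcal{M}$ closed under kernels of irreducible morphisms, the hypothesis $\Theta\neq \mathbb{Z}[\mathbb{A}_\infty]$ together with the assumption that $\mathfrak{g}$ is simple (which via the results of \cite{K11} guarantees the component is not periodic and the block is non-semisimple, so that $\cx\Theta\geq 1$ and $\Theta$ is genuinely of one of the listed infinite types) lets me apply the preceding theorem verbatim: $\Theta$ has complexity two. I would close by noting $\cx\Theta$ is well-defined because $\mathcal{V}(M)$ depends only on the component (Proposition \ref{generalsupportfacts}(v)), so writing $\cx\Theta$ for the common value $\cx M$, $M\in\Theta$, is legitimate, and the previous theorem gives $\cx\Theta = 2$ as desired.

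The main obstacle I anticipate is verifying that $\mathcal{M}$ is closed under kernels of irreducible morphisms with the precise meaning required by the quoted theorem: one has to be slightly careful that the "component gets mapped onto a component" statement from Section 6 is in fact strong enough, and in particular that the kernel of an irreducible map starting at a non-projective vertex of $\Theta$ is again a non-projective module lying in $\Theta$ (and not, say, a projective module or a module outside the component). This should follow from the shape of the almost split sequences $0\to\rad P\to\heart P\oplus P\to P/\soc P\to 0$ near projectives and the fact that $\Delta$ being infinite Dynkin but not $\mathbb{A}_\infty$ forces a vertex $[E]$ with two predecessors and two successors, exactly as exploited in the proof of the preceding theorem; but pinning down that the ambient subcategory $\mathcal{M} = \image\mathcal{F}$ is the correct one, rather than something larger, is where the argument must be made precise.
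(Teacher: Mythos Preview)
Your proposal is correct and follows essentially the same route as the paper: take $\mathcal{M}$ to be the restrictions of $U_\zeta(G_0)U_\zeta^0(\mathfrak{g})$-modules, use the results on the restriction functor (Lemma \ref{res} and the component-to-component lemma) to see that $\mathcal{M}$ is closed under kernels of irreducible maps, invoke the Frobenius extension $U_\zeta(G_0):u_\zeta(f_\alpha)$ from Proposition \ref{frobeniusextension} together with Drupieski's Dade-type theorem, and then apply the preceding theorem verbatim. The paper's own proof is terser (it simply asserts the closure ``by the same token''), but your more careful justification via lifting almost split sequences through $\mathcal{F}$ is exactly the intended mechanism; your extra remark about $\cx\Theta\geq 1$ is unnecessary since non-periodicity already forces $\cx\Theta\geq 2$.
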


\begin{proof}
In the previous section we have shown that the algebras $u_\zeta(f_\alpha)$ satisfy $U_\zeta(G_0):u_\zeta(f_\alpha)$ being a Frobenius extension. By the same token the subcategory $\mathcal{M}$ of restrictions of $U_\zeta(G_0)U_\zeta^0(\mathfrak{g})$-modules is closed under kernels of irreducible maps. Therefore Drupieski's Theorem \cite[Theorem 4.1]{Dru10}) implies the result.
\end{proof}

For higher $r$ this approach does not seem to work although Drupieski's Theorem holds more general for higher $r$, but the algebras to which one restricts are no longer Nakayama.

\begin{ex}
Consider $U_\zeta({N_{\SL(2)}}_1)\cong k[X,Y]/(X^p, Y^\ell)$, an algebra to which Drupieski's Theorem restricts in the case $r=1$. Then this algebra does not admit an analogue of Dade's Lemma. Consider the module $M$ indicated by the following picture:
\[\begin{xy}
\xymatrix{
\circ\ar[d]\ar@{-->}[r]&\circ\ar[d]\ar@{-->}[r]&\dots\ar@{-->}[r]&\circ\ar[d]\ar@{-->}[rd]&\circ\ar[d]\ar@{-->}[dddllll]\\
\circ\ar[d]\ar@{-->}[r]&\circ\ar[d]\ar@{-->}[r]&\dots\ar@{-->}[r]&\circ\ar[d]\ar@{-->}[rd]&\circ\ar[d]\\
\vdots\ar[d]\ar@{-->}[r]&\vdots\ar[d]\ar@{-->}[r]&\dots\ar@{-->}[r]&\vdots\ar[d]\ar@{-->}[rd]&\vdots\ar[d]\\
\circ\ar@{-->}[r]&\circ\ar@{-->}[r]&\dots\ar@{-->}[r]&\circ&\circ
}
\end{xy}\]
The circles represent the basis vectors of the module. The different arrows stand for the action of $X$ and $Y$ respectively. It is easy to see that this module is projective for $k[X]/(X^p)$ and $k[Y]/(Y^\ell)$, but not projective for $k[X,Y]/(X^p,Y^\ell)$, since it has a two-dimensional socle and is $p\ell$-dimensional. For the other subalgebras $B$ it can also be checked that either $A:B$ is not a Frobenius extension or $M$ is projective for $B$.
\end{ex}

For the algebra $U_\zeta(G_0)$ the action of the algebraic group $G$ on the support variety of a module for the "big" quantum group $U_\zeta(\mathfrak{g})$ can be used to exclude components of complexity two.

\begin{thm}
Let $\charac k$ be odd or zero and good for $\Phi$. Assume $\ell$ is odd, coprime to three if $\Phi$ has type $\mathbb{G}_2$ and $\ell\geq h$. Let $M$ be a $U_\zeta(\mathfrak{g})$-module of complexity two. Then the following statements hold:
\begin{enumerate}[(i)]
\item There exist normal subgroups $K,H\subset G$ such that
\begin{enumerate}[(a)]
\item $G=HK$
\item $\mathfrak{g}=\Lie(H)\oplus \Lie(K)$, and
\item $\Lie(K)\cong \mathfrak{sl}_2$.
\end{enumerate}
\item If $M$ is indecomposable, then $\mathcal{V}_{U_\zeta(G_0)}(M)=\mathcal{V}_{U_\zeta(K_0)}(M)$ and $M$ is projective as a $U_\zeta(H_0)$-module.
\end{enumerate}
\end{thm}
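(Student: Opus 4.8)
The plan is to exploit the $G$-action on the (maximal) support variety $\mathcal{V}_{U_\zeta(\mathfrak{g})}(k)$, which is known (by work of Ostrik, Bendel--Nakano--Parshall--Pillen, and Drupieski) to be isomorphic to the nilpotent cone $\mathcal{N}$ of $\mathfrak{g}$, on which $G$ acts with finitely many orbits. For an indecomposable $U_\zeta(\mathfrak{g})$-module $M$ the support variety $\mathcal{V}_{U_\zeta(G_0)}(M)$ is a closed conical $G$-stable subvariety of $\mathcal{N}$, and by Proposition~\ref{generalsupportfacts}(iv) its projectivization is connected; the hypothesis $\cx M = 2$ says $\dim \mathcal{V}_{U_\zeta(G_0)}(M) = 2$. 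So the first step is: classify the two-dimensional closed conical $G$-stable connected subvarieties of $\mathcal{N}$. Because $G$ has only finitely many nilpotent orbits and a two-dimensional $G$-stable conical variety must be the closure of a single orbit (connectedness plus dimension count, using that orbit closures are unions of orbits of smaller dimension), such a variety is $\overline{\mathcal{O}}$ for a nilpotent orbit $\mathcal{O}$ of dimension two. The only nilpotent orbit of dimension exactly two in any simple Lie algebra is the minimal orbit in type $\mathbb{A}_1$, i.e. $\dim \overline{\mathcal{O}} = 2$ forces a factor of type $\mathfrak{sl}_2$ — this is the combinatorial heart of part (i).

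For part (i), once we know $\mathcal{V}_{U_\zeta(G_0)}(M) = \overline{\mathcal{O}}$ is two-dimensional, I would argue that the Lie algebra $\mathfrak{g}$ must decompose. Write $\mathfrak{g} = \mathfrak{g}_1 \oplus \cdots \oplus \mathfrak{g}_t$ as a sum of simple ideals (allowing $t=1$); then $\mathcal{N} = \mathcal{N}_1 \times \cdots \times \mathcal{N}_t$ and a $G$-stable conical subvariety is a product of $G_i$-stable conical subvarieties $\mathcal{V}_i \subseteq \mathcal{N}_i$. Connectedness and two-dimensionality, together with the fact that each nonzero $\mathcal{N}_i$ has dimension at least $2$ (the minimal nilpotent orbit closure), force exactly one factor, say $\mathfrak{g}_1$, to contribute a two-dimensional piece with all other $\mathcal{V}_i = 0$. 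A two-dimensional nonzero conical $G_1$-stable subvariety of $\mathcal{N}_1$ of minimal possible dimension can only exist if $\mathcal{N}_1$ itself is two-dimensional, i.e. $\mathfrak{g}_1 \cong \mathfrak{sl}_2$. Set $K$ to be the (normal, since it is a product of some of the simple factors) subgroup with $\Lie(K) = \mathfrak{g}_1 \cong \mathfrak{sl}_2$ and $H$ the product of the remaining factors; then $G = HK$, $\mathfrak{g} = \Lie(H) \oplus \Lie(K)$, and $\Lie(K) \cong \mathfrak{sl}_2$, giving (a), (b), (c). One must be slightly careful about the difference between the abstract group $G$ and its almost-simple factors (isogeny issues), but normality of the relevant subgroups and the $\Lie$ statement are insensitive to this.

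For part (ii), the decomposition of $\mathfrak{g}$ induces a corresponding tensor-product-type decomposition of the small quantum group: $U_\zeta(G_0) \cong U_\zeta(H_0) \otimes U_\zeta(K_0)$ (compatibly with the Hopf structure), and the support variety decomposes accordingly, $\mathcal{V}_{U_\zeta(G_0)}(M) = \mathcal{V}_{U_\zeta(H_0)}(M) \times \mathcal{V}_{U_\zeta(K_0)}(M)$, via the K\"unneth-type identification of the cohomology ring of a tensor product of (fg)-Hopf algebras. By part (i), the first factor $\mathcal{V}_{U_\zeta(H_0)}(M)$ must be zero-dimensional, hence $\{0\}$, which by Proposition~\ref{generalsupportfacts}(ii) applied to $U_\zeta(H_0)$ says precisely that $M$ is projective (equivalently injective/free) as a $U_\zeta(H_0)$-module. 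Then $\mathcal{V}_{U_\zeta(G_0)}(M) = \{0\} \times \mathcal{V}_{U_\zeta(K_0)}(M)$, which under the product identification is $\mathcal{V}_{U_\zeta(K_0)}(M)$, giving the remaining equality.

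The main obstacle is the first step: pinning down that a two-dimensional closed conical connected $G$-stable subvariety of $\mathcal{N}$ must be the closure of the minimal orbit of an $\mathfrak{sl}_2$-factor. This requires the identification $\mathcal{V}_{U_\zeta(\mathfrak{g})}(k) \cong \mathcal{N}$ (valid under the stated hypotheses on $\charac k$, $\ell$, and $h$), the finiteness of the orbit set, the fact that orbit closures are unions of orbits, and the elementary but essential fact that the minimal nonzero nilpotent orbit in any simple $\mathfrak{g}$ has dimension $\geq 2$ with equality exactly for $\mathfrak{sl}_2$ — the last point is where the $\mathfrak{sl}_2$ rigidity really comes from. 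The rest (the group-theoretic packaging into $H$ and $K$, and the tensor decomposition of support varieties) is then essentially bookkeeping, modulo care with the normality and $\Lie$ statements.
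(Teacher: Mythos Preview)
Your approach via the nilpotent orbit classification is genuinely different from the paper's. The paper never invokes finiteness of nilpotent orbits or the dimension of the minimal orbit; instead it applies Borel's Fixed Point Theorem to the $G$-action on $\mathbb{P}\mathcal{V}_{U_\zeta(G_0)}(M)$ to produce a point $[x_0]$ with parabolic stabilizer $P_0$ of codimension at most one, and then analyzes the induced surjection $G\to\PGL_2$ coming from the action on $G/P_0$ to single out the simple factor $K$. Your route is more direct once one accepts the identification of the support variety with a subvariety of $\mathcal{N}$ and the fact that the minimal nilpotent orbit in a simple $\mathfrak{g}$ has dimension $2$ exactly when $\mathfrak{g}\cong\mathfrak{sl}_2$; the paper's route trades this structure theory for a more elementary fixed-point argument.

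There is, however, a real gap in your write-up. The two product decompositions you use are both false in general. First, a closed $G$-stable conical subvariety of $\mathcal{N}=\prod_i\mathcal{N}_i$ need not be a product of $G_i$-stable pieces: for $G=G_1\times G_2$ the set $(\mathcal{N}_1\times\{0\})\cup(\{0\}\times\mathcal{N}_2)$ is $G$-stable, closed and conical but not a product. Second, and more seriously, for a tensor product $A=A_1\otimes A_2$ of (fg)-Hopf algebras one does \emph{not} have $\mathcal{V}_A(M)=\mathcal{V}_{A_1}(M|_{A_1})\times\mathcal{V}_{A_2}(M|_{A_2})$ for arbitrary $A$-modules $M$; already for $A=k[x,y]/(x^p,y^p)$ there are modules whose support variety is a diagonal line while both restrictions are free. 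What K\"unneth actually gives is the intersection formula $\mathcal{V}_{A_i}(M|_{A_i})=\mathcal{V}_A(M)\cap\mathcal{V}_{A_i}(k)$, which is how the paper uses it.

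Your argument is salvageable along its own lines. For (i) you do not need any product decomposition (nor connectedness, which is not assumed in (i)): the two-dimensional $G$-stable variety $\mathcal{V}_{U_\zeta(G_0)}(M)$ is a finite union of nilpotent orbits and therefore contains a two-dimensional orbit; any such orbit is a product $\prod_i\mathcal{O}_i$ of orbits in the simple factors, so exactly one $\mathcal{O}_i$ is two-dimensional and the rest are $\{0\}$, forcing $\mathfrak{g}_i\cong\mathfrak{sl}_2$. For (ii), now using indecomposability and hence connectedness of $\mathbb{P}\mathcal{V}_{U_\zeta(G_0)}(M)$, the distinct two-dimensional orbit closures meet only at $0$, so there is a single one and $\mathcal{V}_{U_\zeta(G_0)}(M)\subseteq\Lie(K)$; then the intersection formula yields $\mathcal{V}_{U_\zeta(H_0)}(M)=\mathcal{V}_{U_\zeta(G_0)}(M)\cap\Lie(H)=\{0\}$ and $\mathcal{V}_{U_\zeta(K_0)}(M)=\mathcal{V}_{U_\zeta(G_0)}(M)\cap\Lie(K)=\mathcal{V}_{U_\zeta(G_0)}(M)$, exactly as in the paper's final step.
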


\begin{proof}
By general theory (see e.g. \cite[Theorem 8.1.5]{Spr98}) we have $G=G^1\cdots G^r$, where the $G^i$ are the minimal non-trivial closed, connected, normal subgroups of $G$. Since $M$ is a $U_\zeta(\mathfrak{g})$-module, the support variety $\mathcal{V}_{U_\zeta(G_0)}(M)$ and its projectivization $\mathbb{P}\mathcal{V}_{U_\zeta(G_0)}(M)$ are stable under the adjoint representation. Owing to Borel's Fixed Point Theorem (see e.g. \cite[Theorem 6.2.6]{Spr98}), there exists $[x_0]\in \mathbb{P}\mathcal{V}_{U_\zeta(G_0)}(M)$ that is fixed by a given Borel subgroup $B\subset G$. Consequently the stabilizer $P_0$ of $[x_0]$ is parabolic.\\
According to the orbit formula we have $\dim G-\dim P_0=\dim G\cdot [x_0]\leq 1$. Therefore there are two possibilities for this dimension, either $\dim G=\dim P_0$, which implies $P_0=G$, or $\dim G-\dim P_0=1$.\\
The assumption $P_0=G$ implies that $kx_0\subseteq \mathfrak{g}$ is invariant under the adjoint representation. Hence, $kx_0$ is an abelian ideal of $\mathfrak{g}$. As $\mathfrak{g}$ is semisimple, it follows that $x_0=0$, a contradiction. Thus we have $\dim P_0=\dim G-1$. By the arguments of \cite[Proposition 13.13]{Bor91} (see also \cite[Proof of Proposition 5.1]{Fa00}) we see that the action of $G$ on $G/P_0$ induces a surjective homomorphism $\varphi: G\to \PGL_2$ of algebraic groups. As the $\varphi(G^i)$ are normal subgroups of the simple group $\PGL_2$, we have that $\varphi(G^i)=\PGL_2$ or $G_i\subseteq \ker \varphi$. Since $(G^i,G^j)=1$, we also have that $(\varphi G^i, \varphi G^j)=1$, so if there were two indices $i,j$ with $\varphi(G^i)=\varphi(G^j)=\PGL_2$, then $\varphi(G^j)\in Z(\PGL_2)$, a contradiction. Thus there exists exactly one index $i_0\in \{1,\dots,r\}$ such that $\varphi(G^{i_0})=\PGL_2$, without loss of generality $i_0=1$. Setting $K:=G^1$ and $H:=G^2\cdots G^r$ we have $\dim K=3$ and $H\subseteq \ker \varphi\subseteq P_0$. Since $(G^i, G^j)=1$ for $i\neq j$ we have $\Lie(H)\subseteq C_{\mathfrak{g}}(\Lie(K))$. Since $\Lie(K)$ has only inner derivations it follows that for $x\in \mathfrak{g}$ the derivation $\ad x|_{\Lie(K)}$ is inner, i.e. there exists $v\in \Lie(K)$, such that $x-v\in C_{\mathfrak{g}}(\Lie(K))$. Therefore since the center of $\Lie(K)$ is trivial, it follows that $\mathfrak{g}=\Lie(K)\oplus C_{\mathfrak{g}}(\Lie(K))$. Moreover, $H\cap K$ is finite, so that
\[\dim \mathfrak{g}=\dim G=\dim H+\dim K=\dim \Lie(H)+\dim \Lie(K).\]
This shows that $\mathfrak{g}=\Lie(K)\oplus \Lie(H)$.\\
Let $M$ now be indecomposable. Since $\dim \mathcal{V}_{U_\zeta(G_0)}(M)=2$, we have $\dim G\cdot [x]\leq 1$ for all $[x]\in \mathbb{P}\mathcal{V}_{U_\zeta(G_0)}(M)$. If $\dim G\cdot [x]=0$ for some $x$, then $G[x]=[x]$ as $\mathbb{P}\mathcal{V}_{U_\zeta(G_0)}$ is connected, and $kx\neq 0$ is an abelian ideal of $\mathfrak{g}$, a contradiction. Hence all orbits have dimension one, so that each of them is closed and thus an irreducible component of $\mathbb{P}\mathcal{V}_{U_\zeta(G_0)}(M)$. Since the orbits do not intersect and $\mathbb{P}\mathcal{V}_{U_\zeta(G_0)}(M)$ is connected, it follows that $\mathbb{P}\mathcal{V}_{U_\zeta(G_0)}(M)$ is irreducible. In particular we have
\[\mathbb{P}\mathcal{V}_{U_\zeta(G_0)}(M)=G\cdot [x_0]=K\cdot H\cdot [x_0]=K\cdot [x_0],\]
since $H\subseteq P_0$. Since $H$ and $K$ commute, this implies that $H$ operates trivially on $\mathbb{P}\mathcal{V}_{U_\zeta(G_0)}(M)$.\\
Let $x$ be a nonzero element of $\mathcal{V}_{U_\zeta(G_0)}(M)$. By the observation above there exists a character $\alpha_x:H\to k^\times$, such that $\Ad(h)(x)=\alpha_x(h)x$ for all $h\in H$. Since $G^i=(G^i,G^i)$ for all $2\leq i\leq r$ and commutators are mappped to commutators via $\alpha_x$ it readily follows that $\alpha_x(h)=1$ for $h\in H$. Consequently $[\Lie(H),x]=0$. Writing $x=y+z$, where $y\in \Lie(K)$ and $z\in \Lie(H)$, we have $[\mathfrak{g},z]=[\Lie(H),z]=[\Lie(H),x]=0$. Hence $z\in C(\mathfrak{g})$ defines an abelian ideal $kz\subset \mathfrak{g}$. Since $\mathfrak{g}$ is semisimple, we conclude that $z=0$. Consequently $x=y\in \Lie(K)$. As a consequence of the K\"unneth formula we now obtain $\mathcal{V}_{U_\zeta(G_0)}(M)=\mathcal{V}_{U_\zeta(G_0)}(M)\cap \Lie(K)=\mathcal{V}_{U_\zeta(K_0)}(M)$. Thus $\mathcal{V}_{U_\zeta(H_0)}(M)=0$ and hence $M$ is a projective $U_\zeta(H_0)$-module.
\end{proof}

\begin{thm}
Let $\mathfrak{g}\neq \mathfrak{sl}_2$ be a simple Lie algebra. Let $B$ be a block of $U_\zeta(G_r)$. Then there are no Euclidean components for $B$. If $r=0$, then all components of $U_\zeta(G_0)$ containing $U_\zeta(G)$-modules are isomorphic to $\mathbb{Z}[\mathbb{A}_\infty]$, in particular those containing simple modules.
\end{thm}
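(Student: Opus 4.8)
The plan is to combine the structural results accumulated in Sections~1--6 and dispose of the two tree-class possibilities that remain after Webb's Theorem, namely the Euclidean diagrams and the infinite Dynkin diagrams $\mathbb{D}_\infty$ and $\mathbb{A}_\infty^\infty$. By the theorem of Kerner--Zacharia recalled in Section~3, any non-periodic component of $\Gamma_s(U_\zeta(G_r))$ has the form $\mathbb{Z}[\Delta]$ with $\Delta$ Euclidean or infinite Dynkin. For the first assertion I would argue by contradiction: suppose a block $B$ of $U_\zeta(G_r)$ has a Euclidean component $\Theta$. One first passes to $r=0$ via the normal Hopf subalgebra $U_\zeta(G_0)\subseteq U_\zeta(G_r)$ and standard Clifford-type arguments (a Euclidean component restricts to give a Euclidean component over $U_\zeta(G_0)$, using that induction/restriction along this extension preserves the relevant structure), so it suffices to rule out Euclidean components for $U_\zeta(G_0)$ when $\mathfrak{g}\neq\mathfrak{sl}_2$. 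A Euclidean component has complexity two by \cite[Proposition 1.1]{KZ11}; since $\mathfrak{g}\ne\mathfrak{sl}_2$ is simple, the previous theorem (applied to modules for $U_\zeta(\mathfrak{g})$ of complexity two) produces a decomposition $\mathfrak{g}=\Lie(H)\oplus\Lie(K)$ with $\Lie(K)\cong\mathfrak{sl}_2$ and $H,K$ normal, nontrivial subgroups of $G$. But $\mathfrak{g}$ is simple, so no such nontrivial decomposition exists --- contradiction. Hence there are no Euclidean components for $B$.

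For the second assertion, fix $r=0$ and let $\Theta$ be a component of $\Gamma_s(U_\zeta(G_0))$ containing (the restriction of) a $U_\zeta(\mathfrak{g})$-module $M$. If $\Theta$ is periodic, then by the theorem in Section~3 it is a homogeneous tube $\mathbb{Z}[\mathbb{A}_\infty]/\tau$, whose tree class is $\mathbb{A}_\infty$; but a component containing an actual $U_\zeta(\mathfrak{g})$-module cannot be periodic --- a periodic module has complexity one, whereas a $U_\zeta(\mathfrak{g})$-module has $G$-stable (projectivized) support variety, and a $G$-stable closed subvariety of $\mathbb{P}(\mathfrak{g})$ of dimension zero would be a single fixed point, forcing an abelian ideal in $\mathfrak{g}$ and hence a contradiction to semisimplicity. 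So $\Theta$ is non-periodic, of the form $\mathbb{Z}[\Delta]$. By the first part it is not Euclidean, so $\Delta\in\{\mathbb{A}_\infty,\mathbb{D}_\infty,\mathbb{A}_\infty^\infty\}$. Now I invoke Proposition~\ref{cx2}: a component of $U_\zeta(G_0)$ containing the restriction of a $U_\zeta(G_0)U_\zeta^0(\mathfrak{g})$-module (in particular, the restriction of a rational $U_\zeta(\mathfrak{g})$-module, which is automatically a $U_\zeta(G_0)U_\zeta^0(\mathfrak{g})$-module since it admits a weight-space decomposition) and which is not $\mathbb{Z}[\mathbb{A}_\infty]$ has complexity exactly two. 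Combined once more with the structure theorem on complexity-two $U_\zeta(\mathfrak{g})$-modules, complexity two would again force the splitting $\mathfrak{g}=\Lie(H)\oplus\Lie(K)$ with $\Lie(K)\cong\mathfrak{sl}_2$, contradicting simplicity of $\mathfrak{g}$. Therefore $\Delta=\mathbb{A}_\infty$, proving that every such component is $\mathbb{Z}[\mathbb{A}_\infty]$; in particular this applies to components containing simple modules, since simple $U_\zeta(G_0)$-modules lift to $U_\zeta(\mathfrak{g})$-modules, as recalled in Section~2.

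The one point that needs care --- and which I expect to be the main obstacle --- is the reduction from $U_\zeta(G_r)$ to $U_\zeta(G_0)$ in the Euclidean case, i.e.\ verifying that a Euclidean component of a block of $U_\zeta(G_r)$ actually forces a Euclidean (equivalently, complexity-two) phenomenon for $U_\zeta(G_0)$ to which the group-action argument applies. One has to use that $U_\zeta(G_0)$ is normal in $U_\zeta(G_r)$ with quotient $\Dist(G_r)$, that $\Dist(G_r)$-modules have complexity zero over $U_\zeta(G_0)$, and that complexity is additive/subadditive along the corresponding restriction, so that a complexity-two $U_\zeta(G_r)$-module restricts to a module of complexity at most two over $U_\zeta(G_0)$, with the relevant support variety still carrying the adjoint $G$-action; then the dimension count on $\mathbb{P}\mathcal{V}$ goes through as before. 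Once this bookkeeping is set up, the rest is a direct assembly of Webb's Theorem, the complexity-two dichotomy theorem, Proposition~\ref{cx2}, and the simplicity of $\mathfrak{g}$.
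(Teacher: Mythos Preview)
Your argument for $r=0$ is essentially the paper's, and your separate treatment of the periodic case in the second assertion (complexity one forces a $G$-fixed line in $\mathfrak{g}$, hence an abelian ideal) is a clean addition the paper leaves implicit. One small omission even at $r=0$: to apply the structure theorem on $U_\zeta(\mathfrak{g})$-modules of complexity two you must first locate a $U_\zeta(\mathfrak{g})$-module inside the Euclidean component. The paper does this by observing that a Euclidean component is non-regular (it is attached to a projective $P$), hence contains the simple module $\Omega(P/\soc P)$; and simple $U_\zeta(G_0)$-modules are $U_\zeta(\mathfrak{g})$-modules. You should state this explicitly rather than jump straight to the structure theorem.

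The genuine gap is your reduction from $U_\zeta(G_r)$ to $U_\zeta(G_0)$ for $r\ge 1$. The claim that a Euclidean component ``restricts to a Euclidean component'' is unsupported, and your fallback --- restrict an arbitrary module in $\Theta$ to $U_\zeta(G_0)$ and use that the support variety still carries the adjoint $G$-action --- is exactly the step that fails: the structure theorem needs a $U_\zeta(\mathfrak{g})$-module, and a generic $U_\zeta(G_r)$-module does not extend to one, so there is no reason its $U_\zeta(G_0)$-support variety is $G$-stable. The paper avoids this entirely. It again uses non-regularity to find a \emph{simple} $U_\zeta(G_r)$-module in $\Theta$, writes it via the quantum Steinberg tensor product as $\overline{S}\otimes S$ with $\overline{S}$ inflated from $\Dist(G_r)$ and $S$ a simple $U_\zeta(G_0)$-module, and then restricts: since $\overline{S}|_{U_\zeta(G_0)}$ is trivial one gets $(\overline{S}\otimes S)|_{U_\zeta(G_0)}\cong S^{\dim\overline{S}}$, so $\cx_{U_\zeta(G_0)}S\le 2$. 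Now $S$ \emph{is} a $U_\zeta(\mathfrak{g})$-module, and the structure theorem applies to it, forcing $\mathfrak{g}=\mathfrak{sl}_2$. Replace your Clifford-type reduction by this argument and the proof goes through.
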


\begin{proof}
First suppose that $r=0$. By Proposition \ref{cx2} we have that each Euclidean component has complexity two. But then the foregoing theorem implies $\mathfrak{g}=\mathfrak{sl}_2$ as every Euclidean component contains a simple module and all simple modules are modules for $U_\zeta(\mathfrak{g})$.\\
Now suppose that $r\geq 1$. Let $\Theta$ be a component isomorphic to $\mathbb{Z}[\Delta]$, where $\Delta$ is Euclidean. The component $\Omega^{-1}\Theta\cong \Theta$ is not regular, i.e. is attached to a projective module $P$ (see e.g. \cite[Main Theorem]{KZ11}). Therefore $\Theta$ contains the simple module $\Omega P/\soc(P)$. This simple module is of the form $\overline{S}\otimes S$, where $\overline{S}$ is a simple module for $\Dist(G_r)$, regarded as a module for $U_\zeta(G_r)$ and $S$ is a simple $U_\zeta(G_0)$-module (cf. \cite[Lemma 1.1]{K11}). Then as a module for $U_\zeta(G_r)$ it has complexity two, hence as a module for $U_\zeta(G_0)$, $S$ has complexity less or equal two. By the foregoing proposition this is not possible for $\mathfrak{g}\neq \mathfrak{sl}_2$.
\end{proof}

\section{The component $\mathbb{Z}[\mathbb{A}_\infty]$}

We have seen in the previous section that the simple modules are (at least for $r=0$) in components of type $\mathbb{Z}[\mathbb{A}_\infty]$. Therefore it is worth studying this case in more detail. The approach is based on the classical case as considered in \cite{FR11}.\\

Recall that a module $M$ is said to be quasi-simple if the middle term of the Auslander-Reiten sequence ending in $M$ is indecomposable.

\begin{thm}
Let $\mathfrak{g}$ be a simple Lie algebra. If a simple module for $U_\zeta(G_r)$ belongs to a component of type $\mathbb{Z}[\mathbb{A}_\infty]$, then it is quasi-simple.
\end{thm}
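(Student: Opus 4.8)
The plan is to argue by contradiction about the position of the simple module $S$ inside its component $\Theta \cong \mathbb{Z}[\mathbb{A}_\infty]$. Recall that in $\mathbb{Z}[\mathbb{A}_\infty]$ every vertex lies on a unique ``level'', the quasi-simple modules being those on the boundary level, and $M$ is quasi-simple precisely when the Auslander--Reiten sequence ending in $M$ has indecomposable middle term. So suppose $S$ is \emph{not} quasi-simple; then $S$ sits on level $m \geq 2$, and the almost split sequence terminating in $S$ has the form $0 \to \tau S \to X_1 \oplus X_2 \to S \to 0$ with both $X_1, X_2$ nonzero (and indecomposable, being the two predecessors of $S$ in the tree). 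I would first extract from this the inequality on composition lengths coming from exactness: $\dim \tau S + \dim S = \dim X_1 + \dim X_2$, and note that since $S$ is simple, in particular the two maps $X_i \to S$ are (up to the structure of $\Theta$) either surjections or the components of $X_1\oplus X_2 \to S$ are built from them. Because $S$ is simple, any map into $S$ that is nonzero is surjective, and any proper submodule inclusion is automatically a map to/from the radical or socle structure of the neighbours; this rigidity is what makes the simple case special.

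Next I would bring in projectivity. A component of type $\mathbb{Z}[\mathbb{A}_\infty]$ is non-periodic, so by the discussion preceding the statement (Webb's theorem for $U_\zeta(G_r)$, together with the Kerner--Zacharia result) it has finite complexity, and more to the point the component $\Omega^{-1}\Theta \cong \Theta$ is \emph{not} regular --- it is attached to a projective indecomposable $P$, exactly as in the proof of the previous theorem, invoking \cite[Main Theorem]{KZ11}. The standard almost split sequence at $P$ is $0 \to \rad P \to \heart P \oplus P \to P/\soc P \to 0$, and $\Omega(P/\soc P) = \rad P$ is the (unique up to $\tau$) point of the component adjacent to the projective. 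I would then compare: the simple module forced into $\Theta$ by the non-regularity argument is $\Omega P/\soc P$, i.e. $P/\soc P$ itself is, up to $\Omega$, our simple $S$ (after possibly relabelling via $\Omega$, which preserves $\Theta$). So $S$ appears as $\Kopf P$ or, shifted, as a neighbour of the projective vertex.

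The key step is then to pin down the level of $P/\soc P$ (equivalently of $S$) in $\mathbb{Z}[\mathbb{A}_\infty]$: I claim it must be the boundary level. The irreducible maps out of the predecessor of $P$ are all surjective except the inclusion $\rad P \hookrightarrow P$, and the irreducible maps into the successor of $P$ are all injective except $P \twoheadrightarrow P/\soc P$; propagating this ``one surjective, rest injective / one injective, rest surjective'' dichotomy along the meshes of the component (an induction exactly like the one in the $\mathbb{Z}[\tilde{\mathbb{A}}_2]$ proposition) shows that the only vertex where a simple module can sit is at the end of a mesh adjacent to $P$ --- and in $\mathbb{Z}[\mathbb{A}_\infty]$, as opposed to $\mathbb{Z}[\tilde{\mathbb{A}}_2]$ or $\mathbb{Z}[\mathbb{A}_\infty^\infty]$, the module $P/\soc P$ (resp. $\rad P$) adjacent to the unique projective vertex lies on the boundary, because $\heart P$ has only \emph{one} indecomposable summand there (the tree $\mathbb{A}_\infty$ has a single end adjacent to that mesh). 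Hence $S$ is at the boundary level, i.e. the middle term of the AR-sequence ending in $S$ is the single module $\rad P$ (or its $\tau$-translate), which is indecomposable --- contradicting the assumption that $S$ is not quasi-simple, and proving the theorem.

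I expect the main obstacle to be the last step: making rigorous the claim that in a $\mathbb{Z}[\mathbb{A}_\infty]$ component attached to a projective $P$, the module $P/\soc P$ necessarily sits on the quasi-simple level. One has to rule out configurations where the projective vertex is ``in the interior'', which requires knowing that $\heart P$ has at most one non-projective indecomposable summand feeding into the component --- plausibly this follows because the arrows at the projective vertex in $\Gamma_s$ come from the AR-sequence $0 \to \rad P \to \heart P \oplus P \to P/\soc P \to 0$ and the shape $\mathbb{Z}[\mathbb{A}_\infty]$ forces the valency there to be one, but the bookkeeping with the non-stable versus stable quiver (as in the Proposition \ref{generalsupportfacts}(iv) / \cite[p.~155]{BR87} attachment) needs to be done carefully. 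Everything else --- the non-regularity of $\Omega^{-1}\Theta$, the identification of the simple module in the component, the injective/surjective propagation --- is a direct transcription of arguments already used in the $\mathbb{Z}[\tilde{\mathbb{A}}_2]$ proposition and the preceding theorem.
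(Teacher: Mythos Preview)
Your argument has a genuine gap at exactly the point you flagged. The claim that in a $\mathbb{Z}[\mathbb{A}_\infty]$ component the attached projective $P$ must sit so that $P/\soc P$ lies on the boundary level is \emph{equivalent} to the assertion that $\heart P$ has a single non-projective indecomposable summand; but the valency of the vertex $[\rad P]=[P/\soc P]$ in the stable quiver $\Gamma_s$ is \emph{by definition} the number of such summands, and nothing about the abstract shape $\mathbb{Z}[\mathbb{A}_\infty]$ forces that number to be one rather than two. Projectives can perfectly well be attached at interior levels. Since here $P=P(S)$ and $\Omega^{-1}S=P/\soc P$, the question ``is $P/\soc P$ at the boundary of $\Omega^{-1}\Theta$?'' is, via the quiver isomorphism $\Omega$, literally the question ``is $S$ quasi-simple?'', so your argument is circular. (A smaller point: the non-regularity of $\Omega^{-1}\Theta$ is not a consequence of \cite{KZ11} for $\mathbb{A}_\infty$-components; it holds here simply because $S$ is simple, so $P(S)$ is attached.) The injective/surjective propagation you propose does yield constraints of the form ``no simple can sit at a vertex receiving a proper monomorphism or emitting a proper epimorphism'', but in $\mathbb{Z}[\mathbb{A}_\infty]$ these constraints, propagated outward from the single mesh at $P$, do not cover the vertex $P/\soc P$ itself and hence give no contradiction.

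The paper proceeds entirely differently. Assuming $S$ is not quasi-simple, it invokes Kawata's theorem \cite[Theorem 1.5]{Kaw97} for symmetric algebras, which produces simple modules $T_0=S, T_1,\dots,T_n$ whose projective covers $P(T_i)$ are \emph{uniserial} of length $n+2$ with prescribed radical layers, and with $[P(T_i):T_i]=1$. For $n\geq 2$ this forces $\dim\Ext^1(T_1,T_2)=0$ while $\dim\Ext^1(T_2,T_1)=1$, contradicting the $\Ext^1$-symmetry available for $U_\zeta(G_r)$ (cf.\ \cite[3.8]{AM11}). Hence $n=1$ and $P(T_1)$ has length three; but $P(T_1)$ admits a baby Verma filtration \cite[Proposition B.9]{DruPhD}, so some baby Verma module is simple or projective, forcing it to be the Steinberg module --- a contradiction. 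The essential inputs are thus Kawata's structural theorem together with two features specific to $U_\zeta(G_r)$ (Ext-symmetry and the existence of Verma filtrations), neither of which appears in your outline.
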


\begin{proof}
Let $S$ be a simple module contained in some component $\Theta\cong \mathbb{Z}[\mathbb{A}_\infty]$. Suppose $S$ is not quasi-simple. Then since $U_\zeta(G_r)$ is symmetric, by \cite[Theorem 1.5]{Kaw97} there exist simple modules $T_0, T_1,\dots T_n$, such that $T_0\cong S$, the projective cover of $T_i$ is uniserial of length $n+2$ with $\Kopf \rad P(T_i)\cong T_{i-1}$. By the same token $T_i$ has multiplicity one in $P(T_i)$ for all $i$.\\
If $n\geq 2$, then this yields the contradiction $\dim \Ext^1_{U_\zeta(G_r)}(T_1,T_2)=0$ while $\dim \Ext^1_{U_\zeta(G_r)}(T_2,T_1)=1$ (cf. \cite[3.8]{AM11}). Therefore $n=1$, i.e. $P(T_1)$ has length three. By \cite[Proposition B.9]{DruPhD}, the projective cover of $S$ has a filtration by baby Verma modules. Hence one baby Verma module has to be simple or projective. But by the same token this implies that it is simple and projective. Hence $S$ is the Steinberg module for $U_\zeta(G_r)$, a contradiction.
\end{proof}

\begin{cor}
Let $\mathfrak{g}$ be a simple Lie algebra. Let $S$ be a simple non-projective $U_\zeta(G_r)$-module in a component of type $\mathbb{Z}[\mathbb{A}_\infty]$. Then $\heart(P(S))$ is indecomposable.
\end{cor}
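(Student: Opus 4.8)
The plan is to derive this as an immediate consequence of the preceding theorem together with the standard structure of the Auslander-Reiten sequence attached to a projective module. First I would recall that since $S$ is simple and non-projective, it is quasi-simple by the theorem just proved, so the middle term of the Auslander-Reiten sequence ending in $S$ is indecomposable. Since $U_\zeta(G_r)$ is a symmetric (in particular self-injective) algebra, the component $\Theta \cong \mathbb{Z}[\mathbb{A}_\infty]$ containing $S$ is attached to a projective indecomposable module, and the relevant projective must be $P(S)$: indeed, because $S$ is quasi-simple it sits at the end (the "boundary ray") of the tube-like picture, and the almost split sequence
\[
0 \to \rad P(S) \to \heart P(S) \oplus P(S) \to P(S)/\soc P(S) \to 0
\]
is the standard one whose stabilized version is the mesh at the boundary vertex of $\Theta$.

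Next I would argue as follows. Because $S$ is quasi-simple, the Auslander-Reiten sequence ending in $S$ has indecomposable middle term; applying $\Omega^{-1}$ (or using $\tau \cong \Omega^2$ together with the explicit shape of the mesh at $P(S)$) this forces the heart term $\heart P(S)$ appearing in the standard almost split sequence above to be indecomposable. More directly: the irreducible maps in $\Theta$ into and out of the boundary vertex are unique up to scalar, so the "wing" of the projective vertex can carry only one indecomposable summand in the heart position. I would phrase this by noting that $\rad P(S)/\soc P(S) = \heart P(S)$ is precisely the middle term of the stable part of the mesh containing the boundary, and in $\mathbb{Z}[\mathbb{A}_\infty]$ every non-boundary vertex has exactly one immediate predecessor on the boundary side, hence the heart is indecomposable.

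The main obstacle I anticipate is making rigorous the bookkeeping of which projective is attached to $\Theta$ and the precise identification of $\heart P(S)$ with a single mesh-term; one must be careful that $S$ being quasi-simple really does place it at the boundary ray (as opposed to being the term $P/\soc P$ for some other projective $P$ attached elsewhere in the same component), but for a component of type $\mathbb{Z}[\mathbb{A}_\infty]$ there is a unique boundary $\tau$-orbit, and the quasi-simple modules are exactly those on it, so since $S$ is quasi-simple and a projective is attached to $\Theta$, that projective is $P(S)$. Once this is pinned down, the conclusion that $\heart(P(S))$ is indecomposable is immediate from the shape of the meshes at the boundary of $\mathbb{Z}[\mathbb{A}_\infty]$ (each boundary vertex is connected by a single arrow to its unique neighbour in the next row), exactly as in the analysis of $\mathbb{Z}[\tilde{\mathbb{A}}_2]$-components in the Kronecker section but now with the simpler $\mathbb{A}_\infty$ mesh combinatorics. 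I would close by remarking that this complements the Kronecker and $\tilde{\mathbb{A}}_2$ cases, where $\heart P$ was forced to split as $S \oplus S$ or $S \oplus M$ respectively, whereas here indecomposability of the heart is the characteristic feature of the $\mathbb{A}_\infty$ situation.
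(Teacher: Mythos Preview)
Your core idea --- transport quasi-simplicity of $S$ via $\Omega^{-1}$ to the almost split sequence containing $P(S)$ --- is exactly what the paper does, but your write-up contains a genuine error in the bookkeeping you flagged as the ``main obstacle''.

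You assert that the component $\Theta$ containing $S$ is attached to a projective, and that this projective must be $P(S)$. Neither claim is justified. A component of type $\mathbb{Z}[\mathbb{A}_\infty]$ need not have any projective attached at all (this is a feature of Euclidean components, not of $\mathbb{A}_\infty$ ones), and even when one is, there is no reason it should be $P(S)$. The projective $P(S)$ has $P(S)/\soc P(S)=\Omega^{-1}S$, so $P(S)$ is attached to the component $\Omega^{-1}\Theta$, which in general is different from $\Theta$.

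The paper's proof sidesteps this entirely. It never claims $P(S)$ lies in $\Theta$; instead it just writes down the standard almost split sequence
\[
0\to \rad P(S)\to P(S)\oplus \heart P(S)\to P(S)/\soc P(S)\to 0,
\]
observes that $P(S)/\soc P(S)=\Omega^{-1}S$ (since the algebra is symmetric, $\soc P(S)\cong S$), and then uses that $\Omega$ is an autoequivalence of the stable category, hence a graph automorphism of the stable Auslander--Reiten quiver. Thus $\Omega^{-1}S$ has the same number of non-projective predecessors as $S$, namely one. Since $P(S)$ is projective, the single non-projective predecessor of $\Omega^{-1}S$ is $\heart P(S)$, which is therefore indecomposable. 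No statement about which component $P(S)$ is attached to, or whether $\Theta$ itself carries a projective, is needed.
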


\begin{proof}
Since $U_\zeta(G_r)$ is symmetric, we have $\soc P(S)\cong S$. Hence the following sequence is the standard almost split sequence originating in $\rad{P(S)}$:
\[0\to \rad{P(S)}\to P(S)\oplus \heart P(S)\to P(S)/S\to 0.\]
The autoequivalence on the stable module category $\Omega$ induces an automorphism on the stable Auslander-Reiten quiver. Hence $S$ and $\Omega^{-1}S$ have the same number of non-projective predecessors. By the foregoing theorem we thus have that $\heart P(S)$ is indecomposable.
\end{proof}

\begin{thm}
Let $\mathfrak{g}$ be a simple Lie algebra. Let $\ell>1$ be an odd integer not divisible by $3$ if $\Phi$ is of type $\mathbb{G}_2$. Furthermore for $\charac k=0$ assume that $\ell$ is good for $\Phi$ and that $\ell>3$ if $\Phi$ is of type $\mathbb{B}_n$ or $\mathbb{C}_n$. For $\charac k=p>0$ assume that $p$ is good for $\Phi$ and that $\ell>h$. Then there is only one simple $U_\zeta(G_r)$-module in each Auslander-Reiten component of type $\mathbb{Z}[\mathbb{A}_\infty]$.
\end{thm}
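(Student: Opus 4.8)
The plan is to argue by contradiction. Suppose a component $\Theta\cong\mathbb{Z}[\mathbb{A}_\infty]$ of the stable Auslander--Reiten quiver of $U_\zeta(G_r)$ contains two non-isomorphic simple modules $S$ and $S'$. By the theorem above (simple modules in such components are quasi-simple), both $S$ and $S'$ are quasi-simple, hence both lie on the unique boundary $\tau$-orbit of $\Theta$; consequently $S'\cong\tau^m S$ for some $m\neq 0$, and among all pairs of distinct simples in a common $\mathbb{Z}[\mathbb{A}_\infty]$-component I would choose one with $|m|$ minimal and, after relabelling, assume $m\geq 1$. Neither $S$ nor $S'$ is projective: the unique simple projective module is the Steinberg module $\St_{p^r\ell}$, which lies in the semisimple Steinberg block and hence in no component of the stable Auslander--Reiten quiver. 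Also, as already recorded, $\heart P(S)$ and $\heart P(S')$ are indecomposable, and they are nonzero: if $\heart P(S)=0$ then $\rad P(S)=\soc P(S)=S$, so $\Omega S\cong S$ and $S$ would be $\Omega$-periodic, contradicting $\Theta\cong\mathbb{Z}[\mathbb{A}_\infty]$.

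Next I would bring in the syzygy functor. Since $U_\zeta(G_r)$ is symmetric we have $\tau\cong\Omega^2$ and $\Omega$ commutes with $\tau$, so $\Omega$ induces an automorphism of the stable translation quiver and permutes its components. If $\Omega\Theta=\Theta$, then $\Omega$ restricts to a translation-quiver automorphism of $\mathbb{Z}[\mathbb{A}_\infty]$, hence to a power $\tau^c$ of the translation; but then $\tau\cong\Omega^2$ acts as $\tau^{2c}$ on $\Theta$, which is impossible since $\tau$ acts with infinite orbits on $\mathbb{Z}[\mathbb{A}_\infty]$. Thus $\Omega\Theta\neq\Theta$, while $\Omega^2\Theta=\Theta$ and $\Omega^{-1}\Theta=\Omega\Theta$. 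The projective cover $P(S)$ is then attached to the component $\Omega\Theta$: the Auslander--Reiten sequence $0\to\rad P(S)\to\heart P(S)\oplus P(S)\to P(S)/\soc P(S)\to 0$ has $\tau$-term $\Omega S\cong\rad P(S)$ and ends in $\Omega^{-1}S\cong P(S)/\soc P(S)$, its stable middle term being the indecomposable module $\heart P(S)$; in particular the composition length of $P(S)$ is at least $3$, and the case of length exactly $3$ (i.e.\ $P(S)$ uniserial with simple heart) is ruled out exactly as in the proof of the preceding theorem, via the baby Verma filtration of $P(S)$ (\cite[Prop.\ B.9]{DruPhD}) together with linkage and reciprocity, which would force $S$ to be the Steinberg module. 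The same applies to $S'$.

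I would then study the composition length as a subadditive function on $\Theta$: it is additive at every mesh not attached to a projective and drops by the length of $P(T)$ at the mesh attached to a projective cover $P(T)$, where $T$ runs over the simple modules of $\Omega\Theta$, all such attachments occurring at the boundary. Writing $\lambda_n$ for the length of the $n$-th boundary module of $\Theta$, with $S$ sitting at position $n_0$ and $S'$ at $n_0+m$, one has $\lambda_{n_0}=\lambda_{n_0+m}=1$, whereas at a boundary mesh attached to $P(T)$ two consecutive $\lambda$-values agree and equal the length of $P(T)$ minus one. Positivity of the length on the modules at the internal vertices, together with $\heart P(S)\neq 0\neq\heart P(S')$, first forces that no projective is attached at the boundary vertices carrying $S$, $S'$ or their immediate $\tau^{-1}$-translates; evaluating the length on the module at position $(n_0,m+1)$ of $\Theta$ (which spans the columns from $S$ to $S'$) and confronting the resulting estimate with the positivity of that length and with the lengths of the intervening projective covers --- these being pinned down by their baby Verma filtrations and by BGG-type reciprocity, available under the stated hypotheses on $\ell$ and on $\charac k$ --- should yield a numerical contradiction, forcing $m=0$.

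The hard part will be exactly this last step. The shape of the component and the general Auslander--Reiten bookkeeping do not suffice on their own, because a $\mathbb{Z}[\mathbb{A}_\infty]$-component can in principle be attached to several projectives, whose covers' lengths --- and, more to the point, the composition multiplicities of $S$ and $S'$ in the modules lying between them on the boundary --- are not controlled by the translation quiver alone. One must therefore feed in the module theory of $U_\zeta(G_r)$: baby Verma filtrations, linkage and reciprocity for the relevant indecomposable projectives, and the asymmetry of $\Ext^1$ between simple modules (cf.\ the computation used in the proof of the preceding theorem). Making this input precise enough to close the estimate is the crux of the argument.
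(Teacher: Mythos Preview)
Your proposal does not reach a proof: you yourself flag the final numerical estimate as ``the crux of the argument'' and leave it open. The subadditive length function on a $\mathbb{Z}[\mathbb{A}_\infty]$-component is too weak on its own---such a component may be attached to many projectives of uncontrolled length---and the baby-Verma/reciprocity input you gesture at is not made precise enough to close the gap. Nothing in your outline forces a contradiction from $S'\cong\tau^m S$ with $m\neq 0$.

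The paper's route is entirely different and much shorter; the key tool you are missing is the single-degree $\Ext$-bound of Theorem~\ref{upperbound}. For $r=0$, under the stated hypotheses the even cohomology ring of $U_\zeta(G_0)$ is generated in degree two, so $\cx T\leq\dim\Ext^{2n}_{U_\zeta(G_0)}(T,T)$ for every $n\geq 1$. If $S\not\cong T$ are two simple modules in $\Theta$, both are quasi-simple by the preceding theorem and hence $\Omega^{2n}T\cong S$ for some $n\neq 0$; then
\[
\dim\Ext^{2n}_{U_\zeta(G_0)}(T,T)=\dim\underline{\Hom}_{U_\zeta(G_0)}(S,T)=0,
\]
whence $\cx T=0$ and $T$ is projective, a contradiction. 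No further combinatorics of the component are needed. For $r\geq 1$ the paper reduces to this case via the Steinberg tensor product theorem: writing the two simples as $\overline{S}\otimes S$ and $\overline{T}\otimes T$ (with $\overline{S},\overline{T}$ simple for $\Dist(G_r)$ and $S,T$ simple for $U_\zeta(G_0)$) and restricting $\Omega^{2n}_{U_\zeta(G_r)}(\overline{S}\otimes S)\cong\overline{T}\otimes T$ to $U_\zeta(G_0)$ yields $\Omega^{2n}_{U_\zeta(G_0)}S\cong T$; by the $r=0$ case this forces $S\cong T$, so $S$ is periodic or projective over $U_\zeta(G_0)$, i.e.\ the $U_\zeta(G_0)$-Steinberg module, and one is in the block equivalent to $\modu\Dist(G_r)$, where the classical result \cite[Theorem 2.6]{FR11} finishes the argument.
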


\begin{proof}
We first prove the case of $r=0$ and then combine this result with the classical case to get the result for all $r$. If $r=0$, suppose there is another simple module $T$ in $\Theta$. Then $S$ and $T$ are both quasi-simple by the foregoing considerations. Hence they lie in the same $\tau$-orbit, without loss of generality let $\Omega^{2n}_{U_\zeta(G_0)}T\cong S$. Accordingly we have $\dim \Ext^{2n}_{U_\zeta(G_0)}(T,T)=\dim \Hom_{U_\zeta(G_0)}(S,T)=0$. Hence $T$ is projective by Proposition \ref{generalsupportfacts} and Theorem \ref{upperbound}. If $r\neq 0$, then by the Steinberg tensor product theorem (see e.g. \cite[Lemma 1.1]{K11}), simple modules are tensor products of a simple module for $U_\zeta(G_0)$ and a simple module for $\Dist(G_r)$. Let $\overline{T}\otimes T$ and $\overline{S}\otimes S$ be such simple modules. Then $\Omega^{2n}_{U_\zeta(G_r)}\overline{S}\otimes S\cong \overline{T}\otimes T$ implies that $T^{\dim \overline{T}}\cong \Omega^{2n}_{U_\zeta(G_0)}S^{\dim \overline{S}}\oplus P$, where $P$ is projective. In particular, since $\Omega^{2n}_{U_\zeta(G_0)}S$ is indecomposable we have $\Omega^{2n}_{U_\zeta(G_0)}S\cong T$. Hence $S\cong T$ is a projective or a periodic module for $U_\zeta(G_0)$, hence it is the Steinberg module for $U_\zeta(G_0)$. But then the result follows from the classical case considered in \cite[Theorem 2.6]{FR11}, because these modules form a block ideal equivalent to $\modu \Dist(G_r)$ (cf. \cite[Proposition 1.3]{K11}).
\end{proof}

\section*{Acknowledgement}
The results of this article are part of my doctoral thesis, which I am currently writing at the University of Kiel. I would like to thank my advisor Rolf Farnsteiner for his continuous support, especially for answering my questions on the classical case. I also would like to thank Chris Drupieski for answering some of my questions on Frobenius-Lusztig kernels, especially on restrictions of the parameter. Furthermore I thank the members of my working group for proof reading.

\bibliographystyle{alpha}
\bibliography{publication}

\end{document}